\newcommand{\orb}{\text{orb}}
\newcommand{\R}{\mathbb{R}}
\def\CPM{{\mathcal C\mathcal P\mathcal M\mathcal M}}
\def\E{{\mathcal E}}
\def\K{{\mathcal K}}
\def\M{{\mathcal M}}
\def\N{{\mathcal N}}
\def\P{{\mathcal P}}
\def\Q{{\mathcal Q}}
\def\R{{\mathcal R}}
\def\htop{h_{top}}
\newcommand{\bbr}{\mbox{$\mathbb{R}$}}
\newcommand{\bbz}{\mbox{$\mathbb{Z}$}}
\newcommand{\bbn}{\mbox{$\mathbb{N}$}}
\newtheorem{Theorem}{Theorem}
\newtheorem{Proposition}{Proposition}
\newtheorem{Corollary}{Corollary}
\newtheorem*{claim*}{Claim}
\newtheorem{Definition}{Definition}
\newtheorem{Remark}{Remark}
\def\ie{{\em i.e.,\ }}
\begin{document}

\bibliographystyle{plain}
\title[Constant slope maps and the Vere-Jones classification]
{Constant slope maps and the Vere-Jones classification}
\author{Jozef Bobok and Henk Bruin}
\date{\today}
\address{Czech Technical University in Prague, FCE, Th{\'a}kurova 7,
166 29 Praha 6,
Czech Republic }
\email{jozef.bobok@cvut.cz}
\urladdr{http://mat.fsv.cvut.cz/bobok/}
\address{
Faculty of Mathematics, University of Vienna,
Oskar Morgensternplatz 1, 1090 Wien, Austria}
\email{henk.bruin@univie.ac.at}
\urladdr{http://www.mat.univie.ac.at/}
\subjclass[2000]{37E05, 37B40, 46B25}
\keywords{interval map, topological entropy, conjugacy, constant slope}

\begin{abstract}
We study continuous countably piecewise monotone interval maps, and formulate conditions under which these  are conjugate to maps of constant slope,
particularly when this slope is given by the topological entropy of the map. We confine our investigation to the Markov case and phrase our conditions in the
terminology of the Vere-Jones classification of infinite matrices.
\end{abstract}

\maketitle

\section{Introduction}
For $a,b\in\bbr$, $a<b$, a continuous map $T:[a,b] \to \bbr$
is said to be piecewise monotone
if there are $k\in \bbn$ and points
$a=c_0< c_1<\cdots<c_{k-1}<c_k=b$ such that $T$ is
monotone on each $[c_i,c_{i+1}]$, $i=0,\dots,k-1$.
A piecewise monotone map $T$ has {\em constant slope} $s$ if $|T'(x)| = s$ for all $x \neq c_i$.

The following results are well known for piecewise monotone interval maps:
\begin{Theorem}\cite{Par66,MiThu88}
If $T:[0,1] \to [0,1]$ is piecewise monotone and $\htop(T)>0$ then $T$ is semiconjugate via a
continuous non-decreasing onto map $\varphi\colon~[0,1]\to [0,1]$ to a map $S$ of constant
slope $e^{\htop(T)}$. The map $\varphi$ is a conjugacy ($\varphi$ is
strictly increasing) if $T$ is transitive.
\end{Theorem}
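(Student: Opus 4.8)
The plan is to follow the classical approach of Milnor–Thurston and Parry: build the semiconjugacy $\varphi$ directly from a suitable "address" or "cumulative variation" functional associated with the kneading/orbit data of $T$, show that $S$ defined by $S \circ \varphi = \varphi \circ T$ is well-defined and has constant slope $s = e^{\htop(T)}$, and finally upgrade semiconjugacy to conjugacy under transitivity.

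**Step 1 (the constant-slope candidate via a positive eigenvector / transfer operator).** Set $s = e^{\htop(T)}$; by the variational principle for piecewise monotone maps (or by direct combinatorial estimates on the number of laps of $T^n$), one has $s = \lim_n \big(\#\text{laps}(T^n)\big)^{1/n} > 1$. The key object is a function $\psi\colon [0,1]\to[0,\infty)$ of bounded variation satisfying the functional inequality/equality coming from the Perron–Frobenius (transfer) operator $(\mathcal L f)(x) = \sum_{T(y)=x} f(y)$ at eigenvalue $s$. Concretely, I would look for $\psi$ such that $\sum_{T(y)=x}\psi(y) = s\,\psi(x)$, obtained as a weak-$*$ limit of the normalized iterates $s^{-n}\mathcal L^n \mathbf 1$; positivity of $s$ as the spectral radius and the fact that $T$ has only finitely many laps make this limit a nonnegative BV function, not identically zero. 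Then define
\begin{equation*}
\varphi(x) = \frac{1}{N}\int_0^x \psi(t)\,dt,\qquad N = \int_0^1\psi(t)\,dt,
\end{equation*}
which is continuous, non-decreasing, and onto $[0,1]$.

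**Step 2 (descent to a constant-slope map).** The point of the eigenfunction equation is that $\varphi$ collapses exactly the dynamically irrelevant intervals (where $\psi\equiv 0$, i.e. wandering or non-entropy-carrying pieces) and is injective in the "essential" directions, so $\varphi(x)=\varphi(x')$ is a $T$-invariant equivalence relation; hence $S([\varphi(x)]) := \varphi(T(x))$ is well-defined and continuous. For the slope: on any interval of monotonicity of $T$, a direct computation using the change-of-variables and the eigenfunction identity gives, for $\varphi(x) < \varphi(x')$ in the same lap,
\begin{equation*}
|\varphi(T(x)) - \varphi(T(x'))| = \frac1N\Big|\int_{x}^{x'} \psi(T(t))\,|T'(t)|\,dt\Big| \cdot (\text{lap sign bookkeeping}) = s\,|\varphi(x)-\varphi(x')|,
\end{equation*}
since summing $\psi\circ T$ over preimages reproduces $s\psi$; hence $S$ has constant slope $s = e^{\htop(T)}$ off the finitely many images of the $c_i$. (The clean way to phrase this avoids assuming $T$ is $C^1$: replace $\int |T'|$ by total variation of $\varphi$ along orbit segments, which is the Milnor–Thurston formulation.)

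**Step 3 (conjugacy under transitivity) — the main obstacle.** I expect the delicate part to be showing $\varphi$ is \emph{strictly} increasing when $T$ is transitive, equivalently that $\psi > 0$ almost everywhere and no nondegenerate interval is collapsed. Transitivity rules out wandering intervals and intervals on which all iterates stay within a proper subinterval, so the set $\{\psi = 0\}$, being closed, $T$-forward-invariant in the appropriate sense, and of the form "union of intervals," must be empty or nowhere dense; one then argues that a collapsed interval $J$ with $\varphi(J)$ a point would force $\bigcup_n T^n(J)$ to miss an open set, contradicting transitivity (here one uses that $T$ piecewise monotone + transitive implies $T$ is locally eventually onto, or at least that forward orbits of intervals are dense). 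Converting "dense forward orbit" into "$\varphi$ injective" is where one must be careful about the finitely many turning points and the possibility of a flat spot at a periodic critical orbit; the standard resolution is that such a flat spot would be a proper invariant subinterval for some power, again contradicting transitivity. Once $\varphi$ is a homeomorphism, $S = \varphi \circ T \circ \varphi^{-1}$ is automatically a genuine conjugacy of constant slope $e^{\htop(T)}$, completing the proof.
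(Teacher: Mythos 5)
The paper does not actually prove this statement --- it is quoted from Parry and Milnor--Thurston --- so your proposal has to be measured against the classical arguments and against the paper's own machinery for the Markov case (Theorem~\ref{t:5}, Remark~\ref{r:2}). Against that yardstick, Steps 1--2 contain a genuine error: the functional equation you impose on $\psi$, namely $\sum_{T(y)=x}\psi(y)=s\,\psi(x)$, is the \emph{left} (preimage-counting) eigenvector equation, whereas the object that produces a constant-slope factor is the \emph{right} eigenvector: a measure $\mu$ with $\mu(T(J))=s\,\mu(J)$ for every interval $J$ of monotonicity of $T$ --- in the Markov formulation, a positive solution of $Mv=\lambda v$ as in equation \eqref{e:2}, whose entries are the \emph{lengths} of the partition elements of the constant-slope model. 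These two conditions are genuinely different. Concretely, take the skew tent map $T(x)=x/a$ on $[0,a]$ and $T(x)=(1-x)/(1-a)$ on $[a,1]$ with $a\neq 1/2$: every interior point has exactly two preimages, so $\psi\equiv 1$ solves your equation with $s=2=e^{\htop(T)}$, your $\varphi$ is the identity, and $S=T$ does not have constant slope. Step 2 is where this surfaces: after the change of variables one gets $\varphi(T(x'))-\varphi(T(x))=\frac1N\int_x^{x'}\psi(T(t))\,|T'(t)|\,dt$, and to extract the factor $s$ you need the cohomological identity $\psi(T(t))\,|T'(t)|=s\,\psi(t)$, which neither follows from nor implies $\sum_{T(y)=x}\psi(y)=s\,\psi(x)$; the parenthetical ``summing $\psi\circ T$ over preimages reproduces $s\psi$'' conflates a statement about the values of $\psi$ at the preimages of a point with a statement about $\psi\circ T$ weighted by $|T'|$ along a lap.

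The repair is to build $\varphi$ from the correct object: either the Milnor--Thurston limit of normalized lap-counting functions, $\varphi(x)=\lim_{t\uparrow 1/s}\bigl(\sum_n \ell(T^n|_{[0,x]})\,t^n\bigr)/\bigl(\sum_n \ell(T^n)\,t^n\bigr)$, or (Parry's route, and the one closest to this paper) a finite Markov partition generated by the orbits of the turning points together with the Perron--Frobenius \emph{right} eigenvector $Mv=e^{\htop(T)}v$, assigning length proportional to $v_i$ to the $i$th element; then $\mu(T(J))=s\,\mu(J)$ holds by construction and the slope computation goes through. Your Step 3 --- transitivity forces injectivity of $\varphi$ because a collapsed interval would yield a wandering interval or a proper invariant subinterval for some power of $T$ --- is the right idea and is essentially how the classical proofs conclude, but it cannot rescue the construction as written.
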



\begin{Theorem}\cite{MiSl80}\label{t:8} If $T$ has a constant slope $s$ then
$\htop(T)=\max\{ 0,\log s\}$.
\end{Theorem}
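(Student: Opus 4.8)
The plan is to establish the two inequalities $\htop(T)\le\max\{0,\log s\}$ and $\htop(T)\ge\max\{0,\log s\}$ separately, working throughout under the hypothesis that $T$ maps $[a,b]$ into itself, so that every iterate $T^n$ is defined on $[a,b]$ (this is the case relevant to Theorem 1, and without it $\htop(T)$ is not meaningful as stated). The one preliminary observation I would record is that a continuous piecewise monotone map with $|T'|\equiv s$ is globally $s$-Lipschitz: on a single monotone piece this is an equality, and for $x<y$ lying in different pieces one concatenates the estimate over the intervening turning points to get $|T(x)-T(y)|\le s|x-y|$. Hence $T^n$ is $s^n$-Lipschitz for every $n$.

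For the upper bound I would argue with $(n,\eps)$-spanning sets. If $s\le 1$ then $T$ is non-expanding, so $\mathrm{dist}(T^ix,T^iy)\le\mathrm{dist}(x,y)$ for all $i$; thus any $\eps$-dense subset of $[a,b]$ is $(n,\eps)$-spanning for every $n$, the spanning numbers stay bounded in $n$, and $\htop(T)=0=\max\{0,\log s\}$. If $s>1$, then an $(\eps\,s^{-(n-1)})$-dense subset of $[a,b]$ — which may be chosen of cardinality $O(s^{n}/\eps)$ — is $(n,\eps)$-spanning, since $s^i\cdot\eps\,s^{-(n-1)}\le\eps$ for $0\le i\le n-1$; letting $n\to\infty$ and then $\eps\to0$ gives $\htop(T)\le\log s$. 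In either case $\htop(T)\le\max\{0,\log s\}$.

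For the lower bound, $\htop(T)\ge0$ is automatic, so it remains to prove $\htop(T)\ge\log s$ when $s>1$. Write $\ell(T^n)$ for the number of laps (maximal monotone pieces) of $T^n$. On each lap $J$ the map $T^n|_J$ is monotone with $|(T^n)'|\equiv s^n$, so $|T^n(J)|=s^n|J|$; since the laps partition $[a,b]$ while each $T^n(J)\subseteq[a,b]$, summing gives $\ell(T^n)\,(b-a)\ge\sum_J|T^n(J)|=s^n(b-a)$, hence $\ell(T^n)\ge s^n$. To turn this into an entropy bound, let $\alpha$ be the cover of $[a,b]$ by the (slightly enlarged, open) laps of $T$. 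Since the turning points of $T^n$ are exactly the points of $\bigcup_{i=0}^{n-1}T^{-i}(\{\text{turning points of }T\})$, the common refinement $\bigvee_{i=0}^{n-1}T^{-i}\alpha$ has, up to a bounded multiplicative factor, $\ell(T^n)$ atoms; therefore $\htop(T)\ge\htop(T,\alpha)=\lim_n\tfrac1n\log\ell(T^n)\ge\log s$. Combining with the upper bound yields $\htop(T)=\log s=\max\{0,\log s\}$.

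The Lipschitz estimate and the spanning-set bookkeeping are routine; the step carrying real content is the lower bound $\htop(T)\ge\lim_n\tfrac1n\log\ell(T^n)$, i.e.\ the identification of the refinements $\bigvee_{i=0}^{n-1}T^{-i}\alpha$ with the lap partitions of $T^n$, combined with the covering-definition inequality $\htop(T)\ge\htop(T,\alpha)$. One may instead simply invoke the well-known formula $\htop(T)=\lim_n\tfrac1n\log\ell(T^n)$ for piecewise monotone maps — natural here, since it stems from the same source — after which only the elementary count $\ell(T^n)\ge s^n$ is needed. I would also check the boundary cases ($s=1$, and $T$ not surjective onto $[a,b]$) explicitly; both are covered, since the Lipschitz/spanning argument uses only $|T'|\le s$ and the lap count uses only $T([a,b])\subseteq[a,b]$.
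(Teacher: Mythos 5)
The paper does not actually prove this statement: it is quoted as background from Misiurewicz--Szlenk \cite{MiSl80}, so there is no in-paper argument to compare yours against. Judged on its own terms, your outline is the standard derivation and its skeleton is sound: the Lipschitz estimate plus the $(n,\eps)$-spanning count gives $\htop(T)\le\max\{0,\log s\}$ (this is exactly Proposition~\ref{p:5}), and the length computation $\sum_J|T^n(J)|=s^n(b-a)$ together with $|T^n(J)|\le b-a$ correctly yields $\ell(T^n)\ge s^n$; this is the only place where constant slope, rather than mere $s$-Lipschitz continuity, is used.

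The one step you should not let stand as written is the open-cover justification of $\htop(T)\ge\lim_n\frac1n\log\ell(T^n)$. Two problems: (a) $\htop(T,\alpha)$ is computed from the cardinality of a \emph{minimal subcover} of $\bigvee_{i=0}^{n-1}T^{-i}\alpha$, not from the number of its nonempty elements; and (b) since the laps of $T$ are enlarged by a \emph{fixed} amount $\rho$ while the laps of $T^n$ have length of order $s^{-n}$, a single element $U_{j_0}\cap T^{-1}U_{j_1}\cap\dots\cap T^{-(n-1)}U_{j_{n-1}}$ of the join can contain all laps of $T^n$ whose itineraries $\rho$-shadow $(j_0,\dots,j_{n-1})$ --- on the order of $3^n$ of them --- so the naive count only gives $\htop(T,\alpha)\ge\lim_n\frac1n\log\ell(T^n)-\log 3$, which is not enough. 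The inequality $\htop(T)\ge\lim_n\frac1n\log\ell(T^n)$ is genuinely the nontrivial half of the Misiurewicz--Szlenk lap-number formula $\htop(T)=\max\{0,\lim_n\frac1n\log\ell(T^n)\}$, and its proof needs more (the ``essential lap''/variation argument or a horseshoe construction). Your fallback of simply invoking that formula is the right move --- it is precisely the main theorem of the cited source --- and with it your proof is complete and correct; just do not present the cover sketch as a self-contained substitute for it.
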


For continuous interval maps with a countably infinite number of pieces of
monotonicity
neither theorem is true - for examples, see \cite{MiRa05} and
\cite{BoSou11}. 
One of the few facts that remains true in the countably piecewise monotone setting is:

\begin{Proposition}\cite{KH95}\label{p:5}
If $T$ is $s$-Lipschitz then $\htop(T) \le \max\{ 0, \log s\}$.
\end{Proposition}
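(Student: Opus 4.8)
The plan is to bound the topological entropy of an $s$-Lipschitz map $T\colon[0,1]\to[0,1]$ by counting how fast the number of monotonicity laps — or, more robustly, the number of distinct itineraries of length $n$ — can grow, using the fact that a Lipschitz constraint prevents orbits from separating too quickly. I would use the characterization of topological entropy through $(n,\eps)$-separated sets: $\htop(T) = \lim_{\eps\to 0}\limsup_{n\to\infty}\frac1n\log s(n,\eps)$, where $s(n,\eps)$ is the maximal cardinality of a set of points that are pairwise $(n,\eps)$-separated with respect to the Bowen metric $d_n(x,y)=\max_{0\le j<n}|T^j x - T^j y|$.

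The key estimate is the following. Since $T$ is $s$-Lipschitz, $|T^j x - T^j y| \le s^j |x-y|$ for all $j\ge 0$. Hence if $|x-y| < \eps / s^{n-1}$ (when $s>1$; if $s\le 1$ the argument is even easier since iterates are nonexpanding and $\htop(T)=0$ follows immediately), then $d_n(x,y) = \max_{0\le j<n} |T^j x - T^j y| \le \max_{0\le j<n} s^j |x-y| < \eps$. Therefore any two points lying in a common subinterval of length less than $\eps/s^{n-1}$ cannot be $(n,\eps)$-separated. Covering $[0,1]$ by $\lceil s^{n-1}/\eps\rceil$ intervals of length at most $\eps/s^{n-1}$, we conclude $s(n,\eps) \le \lceil s^{n-1}/\eps \rceil$. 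Taking logarithms, dividing by $n$, and letting $n\to\infty$ gives $\limsup_n \frac1n\log s(n,\eps) \le \log s$, and this bound is uniform in $\eps$, so $\htop(T)\le\log s$. Combined with the trivial lower bound $\htop(T)\ge 0$, this yields $\htop(T)\le\max\{0,\log s\}$.

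The only genuine subtlety — and the step I would expect to need the most care — is making sure the $(n,\eps)$-separated-set definition of entropy applies in this setting: $T$ is a continuous self-map of the compact metric space $[0,1]$, so Bowen's definition is available and agrees with the usual one, but one should state this cleanly (for instance, citing \cite{KH95} itself, which is the reference in the Proposition, so the variational/Bowen formula is certainly in scope). A secondary point is handling the case $s\le 1$ separately, where $d_n(x,y)=|x-y|$ forces $s(n,\eps)$ to be bounded independently of $n$, hence zero entropy. I do not anticipate any obstacle beyond bookkeeping: the Lipschitz hypothesis is exactly strong enough to control the Bowen metric by a single geometric factor $s^{n-1}$, which is what drives the estimate.
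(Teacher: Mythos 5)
Your argument is correct and complete. The paper itself gives no proof of this proposition --- it is quoted from \cite{KH95} --- and your argument is essentially the standard one found there (the one-dimensional case of the Lipschitz bound on entropy): control the Bowen metric $d_n$ by the factor $s^{n-1}$, deduce that an $(n,\eps)$-separated set has at most one point in each interval of length $\eps/s^{n-1}$, and count intervals; the case $s\le 1$ is handled separately since $d_n(x,y)\le |x-y|$ forces $s(n,\eps)$ to be bounded in $n$.
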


A continuous interval map $T$ has {\em constant slope} $s$ if $|T'(x)| = s$
for all but countably many points.

The question we want to address is when a {\it continuous countably piecewise monotone} interval map $T$ is conjugate to a map of constant slope $\lambda$. Particular attention will be given to the case when a slope is given by the topological entropy of $T$, which we call {\it linearizability}:

\begin{Definition}\label{d:5}
A continuous map $T:[0,1] \to [0,1]$ is said to be {\em linearizable} if it is conjugate to an {\it interval map} of constant slope $\lambda=e^{\htop(T)}$.
\end{Definition}

We will confine ourselves to the Markov case, and explore what can be said if only the transition matrix of a countably piecewise monotone map is known
in terms of the Vere-Jones classification \cite{Ver-Jo67}, refined by \cite{Rue03}.


The structure of our paper is as follows.

In Section~\ref{s:2} ({\it $\CPM$: the class of countably piecewise monotone Markov maps}) we
make precise the conditions on continuous interval maps under which we conduct our investigation - the set of all such maps will be denoted by $\CPM$
(for {\em countably piecewise monotone Markov}). In particular, we introduce a {\em slack} countable Markov partition of a map and distinguish between {\em operator}, resp.\ {\em non-operator type}.

In Section~\ref{s:3} ({\it Conjugacy of a map from $\CPM$ to a map of constant slope}) we rephrase the key equivalence from \cite[Theorem 2.5]{Bo12}: for the sake of completeness we formulate Theorem~\ref{t:5},
which relates the existence of a conjugacy to an ``eigenvalue equation'' \eqref{e:2}, using both classical and slack countable Markov partitions,
see Definition~\ref{def:primary}.

Section~\ref{s:4} ({\it The Vere-Jones Classification}) is devoted to the Vere-Jones classification \cite{Ver-Jo67} that we use as a crucial tool in the most of our proofs in later sections.

In Section~\ref{s:5} ({\it Entropy and the Vere-Jones classification in $\CPM$}) we show in Proposition~\ref{p:7} that the topological entropy of a map in question and (the logarithm of) the Perron value of its transition matrix coincide. Using this fact, we are able to verify in Proposition~\ref{p:10} that all the transition matrices of a map corresponding to all the possible Markov partitions of that map belong to the same class in the Vere-Jones classification; so we can speak about the Vere-Jones classification of a map from $\CPM$. 

In Section~\ref{s:6} ({\it Linearizability}) we present the main results of this text. We start with Proposition~\ref{p:3} showing two basic properties of a $\lambda$-solution of equation \eqref{e:2} and Theorem~\ref{t:7} on leo maps, see Definition~\ref{def:leo}. Afterwards we describe conditions under which a local window perturbation - Theorems~\ref{t:6},~\ref{t:9}, resp.\ a global window perturbation - Theorems~\ref{t:3}, \ref{t:10} results to a linearizable map.

In Section~\ref{s:7} ({\it Examples}) various examples illustrating linearizability/conjugacy to a map of constant slope in the Vere-Jones classes are presented.
\medskip

{\bf Acknowledgments:}
We are grateful for the support of the Austrian Science Fund (FWF): project
number P25975,
and also of the Erwin Schr\"odinger International Institute for Mathematical Physics at the University of Vienna, where the final part of this research was carried out.
Furthermore, we would like to thanks the anonymous referees for the careful
reading and valuable suggestions.

\section{$\CPM$\label{s:2}: the class of countably piecewise monotone Markov maps}

\begin{Definition}\label{def:primary}
 A countable Markov partition $\P$ for a continuous map $T\colon~[0,1]\to [0,1]$ consists of closed intervals with the following properties:
\begin{itemize}
\item Two elements of $\P$ have pairwise disjoint interiors
and $[0,1] \setminus \bigcup \P$ is at most countable.
\item The partition $\P$ is finite or countably infinite;
\item $T|_{i}$ is monotone for each $i\in \P$ ({\em classical Markov partition}) or piecewise monotone for each $i\in \P$; in the latter case we will speak
of {\em a slack Markov partition}.
\item For every $i,j\in \P$  and every maximal interval $i'\subset i$ of monotonicity of $T$, if $T(i')\cap j^{\circ}\neq \emptyset$, then $T(i') \supset j$.
\end{itemize}
\end{Definition}

\begin{Remark} The notion of a slack Markov partition will be useful in later sections of this paper where we will work with {\it window perturbations}.
If $\#\P = \infty$, then the ordinal type of $\P$ need not be ${\mathbb N}$ or $\mathbb Z$.
\end{Remark}

\begin{Definition}\label{def:CPMM}
The class $\CPM$ is the set of continuous interval maps $T:[0,1]\to [0,1]$ satisfying
\begin{itemize}
\item $T$ is {\it topologically mixing}, \ie for every open sets $U,V$ there is an $n$ such that $T^m(U)\cap V\neq\emptyset$ for all $m\ge n$.
\item $T$ admits a countably infinite Markov partition.
\item $\htop(T) < \infty$.
\end{itemize}
\end{Definition}

\begin{Remark} Since $T\in\CPM$ is topologically mixing by definition, it
cannot be constant on any subinterval of $[0,1]$.
\end{Remark}

\begin{Definition}\label{def:leo}
A map $T \in \CPM$
is called {\em leo} ({\em locally eventually onto})
if for every nonempty open set $U$ there is an $n\in{\mathbb N}$
such that $f^n(U)=[0,1]$.
\end{Definition}

\begin{Remark}\label{r:4}~
Let $T\colon~[0,1]\to [0,1]$ be a piecewise monotone Markov map, \ie such that orbits of turning points and endpoints $\{0,1\}$ are finite. Those orbits naturally determine a {\em finite} Markov partition for $T$.
This partition can be easily be refined, in infinitely ways,
to {\em countably infinite} Markov partitions.
If $T$ is topologically mixing and continuous then we will consider
$T$ as an element of $\CPM$.
\end{Remark}

\begin{Proposition}\label{p:12} Let $T\in\CPM$ with a Markov partition $\P$. For every pair $i,j\in\P$ satisfying $T(i)\supset j$ there exist a maximal $\kappa=\kappa(i,j) \in {\mathbb N}$ and intervals $i_1,\dots,i_{\kappa}\subset i$ with pairwise disjoint interiors such that $T\vert_{i_{\ell}}$ is monotone and $T(i_{\ell})\supset j$ for each $\ell=1,\dots,\kappa$.
\end{Proposition}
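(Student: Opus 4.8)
The plan is to count, for a fixed pair $i,j\in\P$ with $T(i)\supset j$, how many maximal laps of $T$ inside $i$ map onto all of $j$, and to show this count is finite and nonzero. First I would recall from Definition~\ref{def:primary} that $T|_i$ is piecewise monotone (for a slack partition) or monotone (for a classical one); in either case $i$ decomposes into finitely or countably many maximal intervals of monotonicity $i' \subset i$, say indexed by $\ell$. The fourth bullet of Definition~\ref{def:primary} says that whenever $T(i')\cap j^\circ \neq \emptyset$ we automatically get $T(i')\supset j$; so the set of laps we want to count is exactly $\{\, i' : T(i')\cap j^\circ \neq \emptyset \,\}$, and I would set $i_1,\dots,i_\kappa$ to be an enumeration of those laps and $\kappa = \kappa(i,j)$ their number.

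The two things left to verify are $\kappa \geq 1$ and $\kappa < \infty$. For $\kappa\geq 1$: since $T(i)\supset j$ and $j$ has nonempty interior (it is a nondegenerate closed interval, as $T\in\CPM$ is topologically mixing and hence not constant on any subinterval, so Markov elements are nondegenerate), there is a point in $i$ mapping into $j^\circ$; that point lies in some maximal lap $i'$ of $T|_i$ (the complement of the union of laps is at most countable, and in fact one can choose the preimage point to be interior to a lap since $j^\circ$ is open and $T$ continuous), giving $T(i')\cap j^\circ\neq\emptyset$. For $\kappa<\infty$: this is where the entropy hypothesis enters. If infinitely many disjoint laps $i'_1, i'_2, \dots \subset i$ each satisfied $T(i'_m)\supset j$, then $T$ restricted to $\bigcup_m i'_m$ would contain a sub-system mapping onto a fixed interval $j$ with infinite multiplicity; iterating, $T^n$ would have a number of full laps over $j$ growing at least like (number of $m$'s)$^n$, which is unbounded, forcing $\htop(T) = \infty$ by the standard lap-number / variation characterization of entropy for piecewise monotone maps, contradicting $\htop(T)<\infty$. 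One must phrase this carefully because $T$ is only countably piecewise monotone, but restricting to a finite Markov sub-system built from $j$ and finitely many of the $i'_m$ reduces it to the classical piecewise monotone case, where Theorem~\ref{t:8} and Proposition~\ref{p:5} (or a direct lap-counting estimate) apply.

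The maximality of $\kappa$ is then automatic from the construction: we have listed \emph{all} laps $i'$ of $T|_i$ with $T(i')\supset j$, so no larger family with the stated disjointness-and-monotonicity properties can exist (any such interval, being monotone for $T$ and mapping onto $j$, must meet $j^\circ$ in its image and hence be contained in one of the maximal laps $i'$; and two distinct members of a disjoint family cannot sit inside the same lap while each surjecting onto the nondegenerate interval $j$... actually two disjoint subintervals of one monotone lap can both map onto $j$ only if $T$ is constant between them, impossible, so each lap contributes at most one).

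The main obstacle I anticipate is the finiteness claim $\kappa<\infty$: making rigorous that ``infinitely many full laps over a fixed interval forces infinite entropy'' in the countably-piecewise-monotone category, without circular appeal to results that themselves assume finitely many pieces. The clean way around it is to pass to the finite sub-Markov map on the union of $j$ with $N$ chosen laps $i'_{m_1},\dots,i'_{m_N}$, observe this finite Markov map has a transition graph containing $N$ loops through the vertex $j$, hence topological entropy at least $\log N$, and note this entropy is a lower bound for $\htop(T)$ by monotonicity of entropy under sub-systems; letting $N\to\infty$ contradicts $\htop(T)<\infty$. Everything else is bookkeeping about laps and the defining Markov property.
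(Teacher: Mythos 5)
Your overall bookkeeping is fine: reducing to maximal laps of $T\vert_i$, identifying the relevant laps via the fourth bullet of Definition~\ref{def:primary}, observing that each monotone lap can contribute at most one member to any admissible family (two subintervals of one lap with disjoint interiors cannot both map onto the nondegenerate $j$), and getting $\kappa\ge 1$ from $T(i)\supset j$ and $j^\circ\neq\emptyset$. The divergence from the paper, and the problem, is in your finiteness step. As written, your ``finite Markov sub-system built from $j$ and $N$ of the $i'_m$'' does \emph{not} have $N$ loops through the vertex $j$: the restricted transition graph has $N$ edges $i'_m\to j$ but, from the data you use ($T(i'_m)\supset j$ only), no edge out of $j$ at all, so it contains no loop and has entropy $0$. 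The horseshoe only appears after you supply a return path from $j$ back over $i$, i.e.\ some $k$ with $T^k(j)\supset i$ (or at least $\supset\bigcup_m i'_m$), which exists by topological mixing but must be invoked explicitly; with it, $T^{k+1}$ has an $N$-horseshoe and $\htop(T)\ge\frac{1}{k+1}\log N$ with $k$ independent of $N$, and your contradiction goes through. Only in the special case $i=j$ does your argument work as stated.

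Beyond that repairable gap, the entropy detour is much heavier than necessary, in two ways. First, ``piecewise monotone'' in this paper means \emph{finitely} many laps (the opening definition requires $k\in\bbn$ break points), so $T\vert_i$ has only finitely many maximal intervals of monotonicity and $\kappa<\infty$ is immediate from your own lap-counting reduction. Second, the paper's proof uses neither laps nor entropy: it observes that $T$, being mixing, is not constant on any subinterval (so $j$ is nondegenerate), and then appeals to continuity --- infinitely many pairwise disjoint subintervals of the compact interval $i$ would have lengths tending to $0$ while each would need oscillation at least $|j|>0$ to cover $j$, contradicting uniform continuity of $T$ on $i$. That one-line argument is what you should aim for; your entropy route can be made to work but needs the mixing step made explicit.
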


\begin{proof}
Since $T\in\CPM$ is topologically mixing, it is not constant on any subinterval of $[0,1]$. Fix a pair $i,j\in\P$ with $T(i)\supset j$. Since $T$ is continuous, there has to be at least one but at most a finite number of pairwise disjoint subintervals of $i$ satisfying the conclusion.
\end{proof}

For a given $T\in\CPM$ with a Markov partition $\P$, applying Proposition~\ref{p:12} we associate to $\P$ the transition
matrix $M=M(T) = (m_{ij})_{i,j\in \P}$ defined by
\begin{equation}\label{e:3}
m_{ij} = \begin{cases}
\kappa(i,j) & \text{ if } T(i) \supset j,\\
0 & \text{ otherwise.}
\end{cases}
\end{equation}
If $\P$ is a classical Markov partition of some $T\in\CPM$ then $m_{ij}\in\{0,1\}$
each $i,j \in \P$.

\begin{Remark}\label{r:5} For the sake of clarity we will write $(T,\P,M)\in\CPM^*$ when a map $T\in\CPM$, a concrete Markov partition $\P$ for $T$ and its transition matrix $M=M(T)$ with respect to $\P$ are assumed.
\end{Remark}

For an infinite matrix $M$ indexed by a countable index set $\P$ we can consider the powers $M^n=(m_{ij}(n))_{i,j\in\P}$ of $M$:
\begin{equation}\label{e:11}M^0=E=(\delta_{ij})_{i,j\in \P},
\quad M^n=\left (\sum_{k\in \P}m_{ik}m_{kj}(n-1)\right )_{i,j\in \P}, ~n\in\bbn.\end{equation}

\begin{Proposition}\label{p:11}
Let $(T,\P,M)\in\CPM^*$.
\begin{itemize}
\item[(i)] For each $n\in\bbn$ and $i,j\in\P$, the entry $m_{ij}(n)$
of $M^n$ is finite.
\item[(ii)] The entry $m_{ij}(n) = m$ if and
only if there are exactly $m$ subintervals $i_1$, $\dots$, $i_{m}$ of $i$ with
pairwise disjoint interiors such that
$T^n(i_k)\supset j$, $k=1,\dots,m$.
\end{itemize}
\end{Proposition}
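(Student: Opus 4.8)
The plan is to argue by induction on $n$, carrying along a geometric description of the ``witnessing'' subintervals. For $n=1$ both claims are immediate from Proposition~\ref{p:12} and \eqref{e:3}: $m_{ij}(1)=m_{ij}$ equals $0$ when $T(i)\not\supseteq j$, and otherwise equals the finite number $\kappa(i,j)$ of maximal intervals of monotonicity $i'\subseteq i$ of $T$ with $T(i')\supseteq j$. I would record two facts at this level. First, since $T$ is not constant on any subinterval, ``monotone'' may be read as ``strictly monotone'', so $T|_{i'}$ is injective on $(i')^\circ$ and hence contains exactly one subinterval $(T|_{i'})^{-1}(j)$ mapped onto $j$; I normalise the witnesses to these preimages and call the family $\mathcal C_{ij}^{(1)}$. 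Second, the Markov property of $\P$ implies that every $x\in i$ with $T(x)\in j^\circ$, apart from a countable set, lies in the interior of a member of $\mathcal C_{ij}^{(1)}$; this ``covering property'' is what will be propagated.

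For the inductive step I would use \eqref{e:11} to write $m_{ij}(n)=\sum_{k\in\P}m_{ik}\,m_{kj}(n-1)$ and build the level-$n$ cylinders by pullback: to a triple $(k,i',c')$, with $i'$ a maximal monotonicity interval of $T|_i$ satisfying $T(i')\supseteq k$ and $c'\in\mathcal C_{kj}^{(n-1)}$, associate $c:=(T|_{i'})^{-1}(c')$. A short check gives that $T^n|_c$ is strictly monotone with $T^n(c)=j$, that $(k,i',c')\mapsto c$ is injective, and --- using disjointness of the $(i')^\circ$, of the interiors of distinct elements of $\P$, and (inductively) of the interiors inside $\mathcal C_{kj}^{(n-1)}$ --- that the resulting family $\mathcal C_{ij}^{(n)}$ consists of subintervals of $i$ with pairwise disjoint interiors, each with $T^n$-image $j$; by the recursion $\#\mathcal C_{ij}^{(n)}=m_{ij}(n)$ as elements of $\bbn\cup\{\infty\}$. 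Let $E$ be the countable set consisting of $[0,1]\setminus\bigcup\P$, the endpoints of the elements of $\P$, and the turning points of $T$, and put $B^{(n)}=\bigcup_{p=0}^{n}T^{-p}(E)$; this is countable, since $T$ has only countably many monotone branches so each $T^{-p}$ of a point is countable. Applying the inductive covering property to $T(x)$ and the appropriate pair $(k,j)$ then shows that every $x\in i\setminus B^{(n)}$ with $T^n(x)\in j^\circ$ lies in the interior of a unique member of $\mathcal C_{ij}^{(n)}$.

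Part (i) I would deduce from uniform continuity of $T^n$ on $[0,1]$: there is $\delta>0$ with $\mathrm{diam}(T^n(J))<\mathrm{diam}(j)$ whenever $\mathrm{diam}(J)<\delta$, so every subinterval $J\subseteq i$ with $T^n(J)\supseteq j$ has length $\ge\delta$; at most $\lceil 1/\delta\rceil$ such subintervals can have pairwise disjoint interiors, and in particular $m_{ij}(n)=\#\mathcal C_{ij}^{(n)}<\infty$. (This is the argument behind Proposition~\ref{p:12}, and it uses that $j$ is nondegenerate.) For part (ii), the family $\mathcal C_{ij}^{(n)}$ already provides $m_{ij}(n)$ witnesses, so it remains only to bound an arbitrary family. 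Given pairwise-disjoint-interior $i_1,\dots,i_N\subseteq i$ with $T^n(i_\ell)\supseteq j$, I would pass inside each $i_\ell$ to a minimal closed subinterval $[\alpha_\ell,\beta_\ell]$ with $T^n([\alpha_\ell,\beta_\ell])\supseteq j$ (it exists, by the length bound); minimality forces $T^n([\alpha_\ell,\beta_\ell])=j$ and $T^n((\alpha_\ell,\beta_\ell))=j^\circ$. Picking $x_\ell\in(\alpha_\ell,\beta_\ell)\setminus B^{(n)}$ --- possible, as $B^{(n)}$ is countable while the preimage of $j^\circ$ in $(\alpha_\ell,\beta_\ell)$ is uncountable --- the covering property puts $x_\ell$ in the interior of some $c_\ell\in\mathcal C_{ij}^{(n)}$; since $T^n|_{c_\ell}$ is strictly monotone onto $j$ while $T^n$ sends $\alpha_\ell,\beta_\ell$ to the endpoints of $j$, $c_\ell$ cannot protrude past $[\alpha_\ell,\beta_\ell]$, so $c_\ell\subseteq i_\ell$. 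As the $c_\ell$ are nondegenerate, $\ell\mapsto c_\ell$ is injective, whence $N\le m_{ij}(n)$; thus $m_{ij}(n)$ is the maximal cardinality of such a family.

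I expect the main obstacle to be the geometric bookkeeping behind the covering property and the inclusion $c_\ell\subseteq[\alpha_\ell,\beta_\ell]$: one must work throughout with maximal intervals of monotonicity of $T$ (only these are controlled by the Markov condition, not those of $T^n$), keep the possibly countably infinite supply of monotone pieces and partition elements under control, and check that $B^{(n)}$ is genuinely countable --- all of which would fail if uncountably many monotone pieces were permitted.
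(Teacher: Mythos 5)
Your proof is correct, but it is far more elaborate than, and in one place genuinely different from, the paper's own two-sentence argument. For part (i) the paper does not use uniform continuity of $T^n$ at all: it observes that continuity of $T$ forces the columnar sums $\sum_{i\in\P}m_{ij}$ to be finite (only finitely many monotone branches can map onto a fixed nondegenerate $j$, as in Proposition~\ref{p:12}), so for each $j$ only finitely many $k$ have $m_{kj}>0$, and the product formula \eqref{e:11} then exhibits $m_{ij}(n)$ as a finite sum of finite terms by induction. Your route --- a uniform $\delta>0$ lower bound on the length of any $J$ with $T^n(J)\supset j$, hence at most $\lceil 1/\delta\rceil$ disjoint-interior witnesses --- is equally valid and has the advantage of bounding $m_{ij}(n)$ uniformly in $i$, but it is not the mechanism the authors invoke. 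For part (ii) the paper simply says ``the induction step follows from \eqref{e:11}''; your construction of the cylinder families $\mathcal C_{ij}^{(n)}$ by pullback, the countable exceptional set $B^{(n)}$, the covering property, and the minimal-subinterval argument showing that an arbitrary disjoint-interior family injects into $\mathcal C_{ij}^{(n)}$ is exactly the content that this one-liner suppresses, and you execute it correctly (in particular the points you flag --- strict monotonicity from mixing, countability of the turning-point set via Proposition~\ref{p:12} and the countability of $\P$, and the fact that a minimal $[\alpha_\ell,\beta_\ell]$ maps \emph{onto} $j$ with open part going to $j^\circ$ --- are all genuine prerequisites). The paper's proof buys brevity at the cost of leaving the bijection between paths and subintervals implicit; yours makes the statement of (ii) (that $m_{ij}(n)$ is the \emph{maximal} cardinality of such a family) unambiguous and fully justified.
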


\begin{proof}(i) From the continuity of $T$ and the definition of $M$ follows that the sum
$\sum_{i\in\P}m_{ij}$ is finite for each $j\in\P$, which directly
implies (i).\\
(ii) For $n=1$ this is given by the relation \eqref{e:3} defining the matrix $M$. The induction step follows from \eqref{e:11} of the product of the nonnegative matrices $M$ and $M^{n-1}$.
\end{proof}

A matrix $M$ indexed by the elements of $\P$ represents a bounded linear operator $\M$ on the Banach space $\ell^1=\ell^1(\P)$ of summable sequences indexed by $\P$, provided that
the supremum of the columnar sums is finite. Then $\M$ is realized through left multiplication
\begin{align}&\M(v):=\left (\sum_{j\in\P}m_{ij}v_j\right )_{i\in\P},~v\in\ell^1(\P),\nonumber\\&
\label{e:36}\| \M \| =
\sup_j \sum_i m_{ij}.\end{align}
The matrix $M^n$ represents the $n$th power $\M^n$ of $\M$ and by Gelfand's formula, the spectral radius $r_{\M}=\lim_{n\to\infty}\| \M^n \|^{\frac1{n}}$.

 \begin{Remark}\label{r:3}
If $(T,\P,M)\in\CPM^*$ the supremum in \eqref{e:36} is finite if and only if
 \begin{equation}\label{e:52}
\exists~K > 0 \ \forall~y\in [0,1]\colon~\#T^{-1}(y)\le K.
\end{equation}
Since this condition does not depend on a concrete choice of $\P$, we will say the map $T$ is of an {\it operator type} when the condition \eqref{e:52} is fulfilled and of a {\it non-operator type} otherwise.
\end{Remark}

\section{Conjugacy of a map from $\CPM$ to a map of constant slope}\label{s:3}

This section is devoted to the fundamental observation regarding a possible conjugacy of an element of $\CPM$ to a map of constant slope. It is presented in Theorem~\ref{t:5}.

Let $(T,\P,M)\in\CPM^*$. We are interested in positive real numbers $\lambda$ and nonzero nonnegative sequences $(v_i)_{i\in\P}$ satisfying  $Mv=\lambda v$, or equivalently

\begin{equation}\label{e:2}
\forall~i\in \P\colon~\sum_{j\in \P}m_{ij}v_j=\lambda~v_i.\end{equation}

\begin{Definition}\label{d:3}
A nonzero nonnegative sequence $v=(v_i)_{i\in\P}$ satisfying \eqref{e:2} will be called a {\em $\lambda$-solution} (for $M$). If in addition $v\in\ell^1(\P)$, it will be called a {\em summable $\lambda$-solution} (for $M$).
\end{Definition}

\begin{Remark}Since every $T\in\CPM$ is topologically mixing, any nonzero nonnegative $\lambda$-solution is in fact positive: If $v=(v_i)_{i\in\P}$ solves \eqref{e:2}, $k,j\in\P$ and $v_j>0$ then by Proposition~\ref{p:11}(ii) for some sufficiently large $n$, $\lambda^n v_k\ge m_{kj}(n)v_j>0$.\end{Remark}

Let $\CPM_{\lambda}$ denote the class of all maps from $\CPM$ of constant slope $\lambda$, \ie $S\in\CPM_{\lambda}$ if $|S'(x)| = s$ for all but countably many points.

The core of the following theorem has been proved in \cite[Theorem 2.5]{Bo12}. Since we will work with maps from $\CPM$ that are topologically mixing, we use topological conjugacies only - see \cite[Proposition 4.6.9]{ALM00}. The theorem will enable us to change freely between classical/slack Markov partitions of the map in question.

\begin{Theorem}\label{t:5}Let $T\in\CPM$. The following conditions are equivalent.
\begin{itemize}
\item[(i)] For some $\lambda >1$, the map $T$ is conjugate via a continuous increasing onto map $\psi\colon~[0,1]\to [0,1]$ to some map $S\in\CPM_{\lambda}$.
\item[(ii)] For some classical Markov partition $\P$ for $T$ there is a positive summable $\lambda$-solution $u=(u_i)_{i\in \P}$ of equation \eqref{e:2}.
    \item[(iii)] For every classical Markov partition $\P$ for $T$ there is a positive summable $\lambda$-solution $u=(u_i)_{i\in \P}$ of equation \eqref{e:2}.
    \item[(iv)] For every slack Markov partition $\Q$ for $T$ there is a positive summable $\lambda$-solution $v=(v_i)_{i\in \Q}$ of equation \eqref{e:2}.
    \item[(v)] For some slack Markov partition $\Q$ for $T$ there is a positive summable $\lambda$-solution $v=(v_i)_{i\in \Q}$ of equation \eqref{e:2}.
        \end{itemize}
\end{Theorem}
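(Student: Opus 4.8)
The plan is to establish the cycle of implications
$(i)\Rightarrow(ii)\Rightarrow(iii)\Rightarrow(iv)\Rightarrow(v)\Rightarrow(i)$, using the fact that the substance of the equivalence between a conjugacy to constant slope and the existence of a summable $\lambda$-solution is already contained in \cite[Theorem 2.5]{Bo12}; the new content here is merely the freedom to pass between classical and slack Markov partitions, and the bookkeeping that the conjugacy can be taken increasing (legitimate for topologically mixing maps by \cite[Proposition 4.6.9]{ALM00}). So the real work is organizing the partition-change arguments.

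\textbf{The core equivalence.} First I would note that for a \emph{fixed} Markov partition $\P$ (classical \emph{or} slack), the map $T$ is conjugate via an increasing homeomorphism to some $S\in\CPM_\lambda$ if and only if \eqref{e:2} has a positive summable $\lambda$-solution indexed by $\P$. This is the statement quoted from \cite{Bo12}: given a summable solution $v$, one rescales each partition element $i$ to have length proportional to $v_i$ (after normalizing $\sum_i v_i=1$), and the eigenvalue equation $\sum_j m_{ij}v_j=\lambda v_i$ is exactly the condition that makes the induced map $S=\psi\circ T\circ\psi^{-1}$ expand each monotone branch on $i$ by the constant factor $\lambda$; conversely, if $S\in\CPM_\lambda$ is conjugate to $T$, the lengths of the images under the conjugacy of the partition elements form a summable $\lambda$-solution. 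Using this for classical partitions gives $(i)\Leftrightarrow(ii)$ at the level of a single partition, and likewise $(i)\Leftrightarrow(v)$ for a single slack partition; note that a classical Markov partition is a special case of a slack one, so $(ii)$ is a priori weaker than $(v)$ in a harmless direction. With the core equivalence in hand, $(iii)\Rightarrow(ii)$, $(iv)\Rightarrow(v)$, and $(iv)\Rightarrow(iii)$ are trivial, so the content reduces to producing, from a single summable $\lambda$-solution, a summable $\lambda$-solution for \emph{every} other (classical or slack) Markov partition of $T$ — equivalently, to proving $(ii)\Rightarrow(iv)$.

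\textbf{Changing partitions.} The key point is that any two Markov partitions $\P$ and $\Q$ for the same $T$ have a common refinement that is again a Markov partition, and refinement affects the $\lambda$-solution transparently. Concretely, I would argue in two steps. \emph{Refinement:} if $\Q$ refines $\P$, then each $j\in\P$ is an essentially disjoint union of elements of $\Q$; given a summable $\lambda$-solution $(v_i)_{i\in\P}$, realize it geometrically as a model map $S$ of slope $\lambda$ (via the core equivalence), read off the lengths of the $\Q$-elements inside the already-linear picture, and these lengths form a summable $\lambda$-solution indexed by $\Q$ — summability is automatic since $\sum_{i\in\Q}(\text{length of }i)\le 1$. \emph{Coarsening:} conversely, if $(w_i)_{i\in\Q}$ is a summable $\lambda$-solution and $\P$ is coarser, the same geometric realization shows that the $\P$-element lengths (sums of the $\Q$-lengths they contain) give a summable $\lambda$-solution for $\P$; one checks the eigenvalue equation survives because the linear model $S$ restricted to a $\P$-element is still piecewise linear of slope $\lambda$ and maps monotone branches onto unions of $\P$-elements. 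Chaining refinement and coarsening through a common refinement of any target partition $\Q$ with the given $\P$ from $(ii)$ yields $(ii)\Rightarrow(iv)$, and with it the whole cycle. Finally I would return to $(v)\Rightarrow(i)$: the core equivalence already gives a conjugacy to a constant-slope map, and \cite[Proposition 4.6.9]{ALM00} lets us take it increasing because $T$ is topologically mixing, closing the loop.

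\textbf{Main obstacle.} The delicate step is the coarsening direction — showing that collapsing several $\Q$-elements into one $\P$-element really does preserve \eqref{e:2} rather than just the summability bound. One must verify that the transition coefficients $\kappa(i,j)$ of Proposition~\ref{p:12} for the coarse partition count branches correctly relative to the fine one, i.e. that the monotone-branch structure aggregates additively in the way the eigenvalue equation demands; this is where the Markov condition ``$T(i')\cap j^\circ\neq\emptyset\Rightarrow T(i')\supset j$'' does the work, ensuring no branch of the coarse map partially overshoots a coarse element. Everything else is routine normalization and the invocation of the cited results.
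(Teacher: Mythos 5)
Your overall architecture is close to the paper's: both proofs lean on \cite[Theorem 2.5]{Bo12} for the classical case and reduce the rest to transferring a summable $\lambda$-solution between a classical partition and a slack one, and your geometric device (build the constant-slope model, read off lengths of the elements of the other partition) is a legitimate substitute for the paper's purely algebraic computation in (iii)$\implies$(iv) (there the coarse solution is defined by $v_i=\sum_{i'\subset i}u_{i'}$ and the eigenvalue equation is checked directly via \eqref{e:24}). Up to and including (ii)$\implies$(iv) your argument goes through, because every model you need to ``realize geometrically'' is built from a solution on a \emph{classical} partition, where \cite{Bo12} applies.

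The genuine gap is the closing link (v)$\implies$(i). You assert the ``core equivalence'' also for a single slack partition, on the grounds that the eigenvalue equation \eqref{e:2} ``is exactly the condition that makes the induced map expand each monotone branch on $i$ by the constant factor $\lambda$.'' For a slack partition this is not true as stated: there $m_{ij}=\kappa(i,j)$ counts several monotone branches inside $i$, so \eqref{e:2} is only an \emph{aggregate} condition $\lambda v_i=\sum_j\kappa(i,j)v_j$ on the total length assigned to $i$; it does not tell you how to distribute that length among the branches, and rescaling $i$ to length $v_i$ does not by itself produce a map of constant slope on each branch. Your own refinement mechanism cannot rescue this, since it presupposes the geometric model already exists -- which is precisely (v)$\implies$(i), so the argument is circular. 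What is needed (and what the paper supplies) is the intermediate step \eqref{e:25}: pass to a classical refinement $\P$ of $\Q$, assign to each monotone branch $i'\in\P$ the mass $u_{i'}=\sum_{T(i')\supset j}v_j$, and verify via the Markov property \eqref{e:24} and the computation \eqref{e:26} that $u$ is a summable $\lambda$-solution for $\P$; only then does \cite{Bo12} apply. This is a short computation, but it is the one genuinely new formula in the proof and it is absent from your proposal. (Two harmless slips besides: since every classical partition is slack, (ii) trivially implies (v), so (v) is the a priori weaker statement, not the other way around; and your ``main obstacle'' paragraph locates the delicacy in the coarsening direction, whereas the direction that actually requires the new idea is slack-to-classical.)
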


\begin{Remark} Recently, Misiurewicz and Roth \cite{MiRo16} have pointed out that if $v$ is a $\lambda$-solution of equation \eqref{e:2} that is not summable, then the map $T$ is conjugate to a map of constant slope defined on the real line or half-line.\end{Remark}

\begin{Remark}\label{r:2}Let $T\in\CPM$ be piecewise monotone with a finite Markov partition. It is well known \cite[Theorem 0.16]{Wal82}, \cite[Theorem 4.4.5]{ALM00} that the corresponding equation \eqref{e:2} has a positive  $e^{\htop(T)}$-solution which is trivially summable. By Theorem~\ref{t:5} it is also true for any countably infinite Markov partition for $T$.\end{Remark}

\begin{proof}[Proof of Theorem~\ref{t:5}]
The equivalence of (i), (ii) and (iii) has been proved in \cite{Bo12}. Since (iv) implies (iii) and (v), it suffices to show that (iii) implies (iv) and (v) implies (ii).

(iii)$\implies$(iv).~Let us assume that $\Q$ is a slack Markov partition for $T$. Obviously there is a classical partition $\P$ for $T$ which is finer than $\Q$,
\ie every element of $\P$ is contained in some element of $\Q$. Using (iii) we can consider a positive summable $\lambda$-solution $u=(u_i)_{i\in \P}$ of equation \eqref{e:2}. Let $v=(v_i)_{i\in \Q}$ be defined as

$$v_i=\sum_{i'\subset i}u_{i'};$$
  clearly the positive sequence $v=(v_i)_{i\in \Q}$ is from $\ell^1(\Q)$. Denoting $(m^{\P}_{ij})_{i,j\in\P}$ and $(m^{\Q}_{ij})_{i,j\in\Q}$ the transition matrices corresponding to the partitions $\P$, $\Q$, we can write

\begin{align}\label{a:2}
\lambda v_i&=\lambda\sum_{i'\subset i}u_{i'}=\sum_{i'\subset i}\lambda u_{i'}=\sum_{i'\subset i}\sum_{k\in\Q}\sum_{k'\subset k}m^{\P}_{i'k'}u_{k'} \\
&=\sum_{k\in\Q}\left(\sum_{k'\subset k}u_{k'}\right )\left (\sum_{i'\subset i}m^{\P}_{i'k'}\right )=\sum_{k\in\Q}m^{\Q}_{ik}v_k,\nonumber
\end{align}
where the equality $m^{\Q}_{ik}=\sum_{i'\subset i}m^{\P}_{i'k'}$ follows from the Markov property of $T$ on $\P$ and $\Q$:

\begin{equation}\label{e:24}
\text{if }T(i')\supset k'\text{ for some }k'\subset k\text{ then also } T(i')\supset k.
\end{equation}
So by \eqref{a:2}, for a given slack Markov partition $\Q$ (for $T$) we find a positive summable $\lambda$-solution $v=(v_i)_{i\in \Q}$ of equation \eqref{e:2}.

   (v)$\implies$(ii).~Assume that for some slack Markov partition $\Q$ for $T$ there is a positive summable $\lambda$-solution $v=(v_i)_{i\in \Q}$ of equation \eqref{e:2}. As in the previous part we can consider a classical Markov partition $\P$ finer than $\Q$. Using again property \eqref{e:24} let us put

   \begin{equation}\label{e:25}
u_{i'}=\sum_{T(i')\supset j}v_j,~i'\in\P.
\end{equation}
Then $u=(u_{i'})_{i'\in \P}$ is positive and we will show that it is a summable $\lambda$-solution of equation \eqref{e:2}. Fix an $i'\in\P$ and using the property \eqref{e:24} for $j\in\Q$ for which $T(i')\supset j$. Then

   \begin{equation}\label{e:26}\lambda v_j=\sum_{k\in\Q}m^{\Q}_{jk}v_k=\sum_{j'\subset j}\sum_{T(j')\supset \ell}v_{\ell}=\sum_{j'\subset j}m^{\P}_{i'j'} \sum_{T(j')\supset\ell}v_{\ell}=\sum_{j'\subset j}m^{\P}_{i'j'} u_{j'},
   \end{equation}
   hence summing \eqref{e:26} through all $j$'s from $\Q$ that are $T$-covered by $i'\in\P$, we obtain with the help of \eqref{e:25},

      \begin{equation*}\label{e:27}\lambda u_{i'}=\sum_{T(i')\supset j}\lambda v_j=\sum_{T(i')\supset j}\sum_{j'\subset j}m^{\P}_{i'j'} u_{j'}=\sum_{j'\in\P}m^{\P}_{i'j'} u_{j'}.
   \end{equation*}
Since by our assumption on $v=(v_i)_{i\in \Q}$ and \eqref{e:25}

\begin{equation*}\label{e:37}
\sum_{i'\in\P}u_{i'}=\sum_{i\in\Q}\sum_{j\in\Q}m^{\Q}_{ij}v_j=
\lambda\sum_{i\in\Q} v_{i}<\infty,
\end{equation*}
so $u=(u_{i'})_{i'\in \P}$ is a summable $\lambda$-solution of equation \eqref{e:2}.
\end{proof}

Maps $T\in\CPM$ are continuous, topologically mixing with positive topological entropy.
Thus all possible semiconjugacies described in \cite[Theorem 2.5]{Bo12} will be in fact conjugacies, see \cite[Proposition 4.6.9]{ALM00}.
Many properties hold under the assumption of positive entropy or for {\em countably piecewise continuous} maps.
One interesting example of a countably piecewise continuous and countably piecewise monotone (still topologically mixing) map will be presented in Section~\ref{s:7}.
However, since the technical details are much more involved and would obscure
the ideas, we confine the proofs to $\CPM$.

\section{The Vere-Jones Classification}\label{s:4}

Let us consider a matrix $M=(m_{ij})_{i,j\in\P}$, where the index set $\P$ is finite or countably infinite. The matrix $M$ will be called
\begin{itemize}\item {\em irreducible},
if for each pair of indices $i,j$ there exists a positive integer $n$ such
that $m_{ij}(n)>0$, and
\item {\em aperiodic}, if for each index $i\in\P$ the value $gcd\{\ell\colon~m_{ii}(\ell)>0\}=1$.
\end{itemize}

\begin{Remark}
Since $T\in\CPM$ is topologically mixing, its transition matrix $M$ is irreducible and aperiodic.
\end{Remark}

In the sequel we follow the approach suggested by Vere-Jones \cite{Ver-Jo67}.

\begin{Proposition}\label{p:2}(i)~Let $M=(m_{ij})_{i,j\in\P}$ be
a nonnegative irreducible aperiodic matrix indexed by a countable index set $\P$. There exists a
common value $\lambda_M$ such that for each $i,j$
\begin{equation}\label{e:13}\lim_{n\to\infty} [m_{ij}(n)]^{\frac{1}{n}}=\sup_{n\in{\tiny \bbn}}[m_{ii}(n)]^{\frac{1}{n}}=\lambda_M.\end{equation}

(ii)~For any value $r>0$ and all $i,j$
\begin{itemize}
\item the series $\sum_{n}m_{ij}(n)r^n$ are either all convergent or all divergent;
\item as $n\to\infty$, either all or none of the sequences
$\{m_{ij}(n)r^n\}_{n}$ tend to zero.
\end{itemize}
\end{Proposition}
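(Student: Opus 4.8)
The plan is to follow the classical Vere-Jones theory for nonnegative irreducible aperiodic matrices, adapting the standard generating-function argument. First I would fix a reference index, say $0 \in \P$, and work with the first-passage/first-return generating functions. Concretely, set $F_{ij}(z) = \sum_{n\geq 1} f_{ij}(n) z^n$, where $f_{ij}(n)$ counts the ``paths'' of length $n$ from $i$ to $j$ (in the weighted sense coming from the entries $m_{ik}$) that hit $j$ for the first time at step $n$, and $P_{ij}(z) = \sum_{n\geq 0} m_{ij}(n) z^n$ the ordinary generating function. The fundamental renewal-type identities $P_{ii}(z) = 1 + F_{ii}(z)P_{ii}(z)$ and $P_{ij}(z) = F_{ij}(z)P_{jj}(z)$ for $i\neq j$ hold formally and hence on the common disk of convergence.

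Next I would establish part (i). By Fekete's subadditivity lemma applied to $a_n := \log m_{ii}(n)$ (note $m_{ii}(n+k) \geq m_{ii}(n)m_{ii}(k)$ by the path interpretation of Proposition~\ref{p:11}, so $-a_n$ is subadditive where finite, with the usual care about $m_{ii}(n)=0$; aperiodicity guarantees $m_{ii}(n)>0$ for all large $n$), the limit $\lim_n m_{ii}(n)^{1/n}$ exists and equals $\sup_n m_{ii}(n)^{1/n}$; call it $\lambda_i$. To see this is independent of $i$ and also governs the off-diagonal entries, use irreducibility: pick $p,q$ with $m_{ij}(p)>0$ and $m_{ji}(q)>0$. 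Then $m_{ii}(n+p+q) \geq m_{ij}(p)\,m_{jj}(n)\,m_{ji}(q)$ and symmetrically, which sandwiches the exponential growth rates and forces $\lambda_i = \lambda_j =: \lambda_M$; the same sandwich applied to $m_{ij}(n)$ via $m_{ii}(n)m_{ij}(p) \le m_{ij}(n+p)$ and $m_{ij}(n+q)\ge \dots$ gives $\lim_n m_{ij}(n)^{1/n} = \lambda_M$ as well. (Here one should remark that $\lambda_M$ may a priori be $+\infty$, but in our application Proposition~\ref{p:11}(i) makes all $m_{ij}(n)$ finite; finiteness of $\lambda_M$ itself is not claimed in the proposition, so no issue arises.)

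For part (ii), the point is that the radius of convergence $R$ of the power series $P_{ij}(z)$ equals $1/\lambda_M$ for \emph{every} pair $i,j$, which is immediate from the Cauchy-Hadamard formula combined with the common value $\lambda_M$ just established in (i). Hence for a given $r>0$: if $r<R$ all the series $\sum_n m_{ij}(n)r^n$ converge; if $r>R$ all diverge; and if $r=R$ the renewal identities $P_{ij}(r) = F_{ij}(r)P_{jj}(r)$ (for $i\ne j$) and $P_{ii}(r) = 1/(1-F_{ii}(r))$ show that finiteness of one diagonal $P_{ii}(r)$ is equivalent to $F_{ii}(r)<1$, which via the cross identities propagates to finiteness of every $P_{ij}(r)$ — so again the series are all convergent or all divergent. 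The second bullet is handled the same way: since all have the same radius of convergence, $m_{ij}(n)r^n \to 0$ for one pair forces $r \le R$; for $r<R$ every sequence tends to zero by convergence of the series; for $r=R$ one relates $m_{ij}(n)R^n$ to $m_{jj}(n)R^n$ using $m_{ij}(n+p) \ge m_{ij}(p)\,m_{jj}(n)$ and the reverse inequality, so either all or none go to zero. I would note that $r < \lambda_M^{-1}$ can only occur in the convergent/tending-to-zero case and that the whole content is concentrated at the critical value $r = \lambda_M^{-1}$, which motivates the $\lambda_M$-transient/null-recurrent/positive-recurrent trichotomy used in later sections.

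The main obstacle is the bookkeeping at the critical radius $r = R = \lambda_M^{-1}$: one must be careful that the renewal identities, which are straightforward as identities of formal power series, remain valid as identities of (possibly $+\infty$-valued) nonnegative real numbers at $z=R$, using Abel's theorem / monotone convergence for power series with nonnegative coefficients. Once that is in place, transferring the dichotomy from the diagonal to all entries via $P_{ij}(z)=F_{ij}(z)P_{jj}(z)$ is routine. The aperiodicity hypothesis is needed only to ensure $m_{ii}(n)>0$ eventually (so that $\lambda_i$ is a genuine $\sup$ over a cofinite set of positive terms and the subadditivity argument applies cleanly); without it one would get periodic vanishing and would have to pass to $m_{ii}(nd)$ along the period $d$.
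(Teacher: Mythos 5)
The paper states Proposition~\ref{p:2} without proof, citing Vere-Jones \cite{Ver-Jo67}, and your reconstruction --- supermultiplicativity $m_{ii}(n+k)\ge m_{ii}(n)m_{ii}(k)$ plus Fekete for the diagonal limit, irreducibility sandwiches for independence of the index and for the off-diagonal entries, and the renewal identities $P_{ij}=F_{ij}P_{jj}$, $P_{ii}=1+F_{ii}P_{ii}$ extended to $z=R$ by monotone convergence for the solidarity statements in (ii) --- is exactly the standard argument of that reference and is sound. The only points requiring the care you already flag are the cofiniteness of $\{n:\,m_{ii}(n)>0\}$ coming from irreducibility together with aperiodicity, and the validity of the renewal identities as identities in $[0,\infty]$ at the critical radius.
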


\begin{Remark}\label{r:6}The number $\lambda_M$ defined by \eqref{e:13} is often called the {\it Perron value of $M$}. In the whole text we will assume that for a given nonnegative irreducible aperiodic matrix $M=(m_{ij})_{i,j\in\P}$ its Perron value $\lambda_M$ is finite.\end{Remark}

\subsection{Entropy, generating functions and the Vere-Jones classes}\label{ss:15}

To a given  irreducible aperiodic matrix $M=(m_{ij})_{i,j\in
\P}$ with entries from $\bbn\cup\{0\}$ corresponds a strongly connected directed graph $G=G(M)=(\P,\E\subset \P\times\P)$ containing $m_{ij}$ edges from $i$ to $j$.

 The {\em Gurevich entropy} of $M$ (or of $G=G(M)$) is defined as
\begin{equation*}\label{e:12}
h(G)=h(M) = \sup\{\log r(M') : M' \text{ is a finite submatrix of } M \},
\end{equation*}
where $r(M')$ is the large eigenvalue of the finite transition matrix $M'$.
Gurevich proved that

\begin{Proposition}\cite{Gur69}\label{p:4}
$h(M) = \log \lambda_M$.
\end{Proposition}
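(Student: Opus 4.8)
The plan is to prove the two inequalities $\log\lambda_M\le h(M)$ and $h(M)\le\log\lambda_M$ separately. Throughout, recall that $m_{ij}(n)$ counts the walks of length $n$ from $i$ to $j$ in the graph $G(M)$ and that these numbers are finite (Proposition~\ref{p:11}(i)); for a finite submatrix $M'=(m_{ij})_{i,j\in S}$, $S\subseteq\P$ finite, the entry $m'_{ij}(n)$ counts only the walks that stay inside $S$, so $m'_{ij}(n)\le m_{ij}(n)$ for all $i,j\in S$ and all $n$. Also, by \eqref{e:13} of Proposition~\ref{p:2}, $m_{ii}(n)\le\lambda_M^{\,n}$ for every $i$ and $n$.

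\emph{The inequality $h(M)\le\log\lambda_M$.} By Perron--Frobenius the spectral radius $r(M')$ of a finite nonnegative matrix $M'$ is attained on one of its strongly connected components, and such a component has the form $M|_C$ with $C\subseteq\P$ finite; hence in the definition of $h(M)$ it suffices to let $M'=M|_S$ range over \emph{irreducible} finite submatrices. For such an $M'$ of period $d$, classical Perron--Frobenius theory for finite irreducible matrices gives $r(M')=\lim_{k\to\infty}\big[m'_{ii}(kd)\big]^{1/(kd)}$ for $i\in S$. Using $m'_{ii}(kd)\le m_{ii}(kd)\le\lambda_M^{\,kd}$ we obtain $r(M')\le\lambda_M$, and taking the supremum over all finite submatrices yields $h(M)\le\log\lambda_M$.

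\emph{The inequality $\log\lambda_M\le h(M)$.} Fix $i\in\P$ and $n\in\bbn$ with $m_{ii}(n)>0$. Let $F=F(i,n)\subseteq\P$ be the (finite) set of vertices lying on some closed walk at $i$ of length exactly $n$; finiteness holds because there are only $m_{ii}(n)<\infty$ such walks, each visiting at most $n+1$ vertices. Put $B:=M|_F$. Every closed walk at $i$ of length $n$ stays in $F$, so $b_{ii}(n)=m_{ii}(n)$; and concatenating $k$ such walks gives $b_{ii}(kn)\ge b_{ii}(n)^{k}$ for every $k$, the map on $k$-tuples of walks being injective since the decomposition into blocks of length $n$ is unique. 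On the other hand $b_{ii}(kn)\le\|B^{kn}\|$, and since $B$ is a finite matrix, $\|B^{kn}\|^{1/(kn)}\to r(B)$ by Gelfand's formula, so
\[
\big[m_{ii}(n)\big]^{1/n}=\big[b_{ii}(n)\big]^{1/n}\le\lim_{k\to\infty}\big[b_{ii}(kn)\big]^{1/(kn)}\le r(B).
\]
Hence $h(M)\ge\log r(B)\ge\tfrac1n\log m_{ii}(n)$; taking the supremum over $n$ and applying \eqref{e:13} gives $h(M)\ge\log\lambda_M$, which together with the first part completes the proof.

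\emph{Where the work lies.} The routine ingredients are super-multiplicativity of $b_{ii}(kn)$ and Gelfand's formula. The only genuinely non-formal point is that the full loop-growth rate $\lambda_M=\sup_n[m_{ii}(n)]^{1/n}$ must be captured by \emph{finite} pieces of the graph, and for this one needs the closed $n$-walks through a fixed vertex to involve only finitely many vertices --- which is exactly where finiteness of the entries $m_{ij}(n)$ (Proposition~\ref{p:11}(i)) is used; without it (a graph with some $m_{ij}(n)=\infty$) the equality $h(M)=\log\lambda_M$ can break down.
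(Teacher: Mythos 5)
The paper offers no proof of this proposition at all: it is quoted as Gurevich's theorem with a citation to \cite{Gur69}, so there is nothing internal to compare your argument against. On its own terms your proof is correct, and it is essentially the standard argument for Gurevich's result: the upper bound $h(M)\le\log\lambda_M$ by reducing to irreducible finite blocks and using $m'_{ii}(kd)\le m_{ii}(kd)\le\lambda_M^{kd}$, and the lower bound by trapping all closed $n$-walks at a fixed vertex inside a finite submatrix $B$ and using supermultiplicativity plus Gelfand's formula to get $[m_{ii}(n)]^{1/n}\le r(B)$. One small remark on attribution of the key finiteness fact: Proposition~\ref{p:11}(i) is stated only for transition matrices of maps in $\CPM$, whereas Proposition~\ref{p:4} is formulated for general irreducible aperiodic matrices with entries in $\bbn\cup\{0\}$. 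In that generality the finiteness of the entries $m_{ij}(n)$ (hence of your vertex set $F(i,n)$) should instead be derived from the standing assumption of Remark~\ref{r:6} that $\lambda_M<\infty$ together with irreducibility: $m_{ii}(n)\le\lambda_M^n<\infty$ by \eqref{e:13}, and $m_{ij}(n)\,m_{ji}(\ell)\le m_{ii}(n+\ell)$ for a suitable $\ell$ with $m_{ji}(\ell)\ge 1$. With that substitution your argument is complete and self-contained, which is more than the paper provides.
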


Since by Proposition~\ref{p:2} the value $R=\lambda_M^{-1}$ is a common radius of convergence of the power series $M_{ij}(z)=\sum_{n\ge 0}m_{ij}(n)z^n$, we immediately obtain for each pair $i,j\in\P$,

\begin{equation*}\label{e:10}M_{ij}(r)\begin{cases}
\in\bbr, & 0\le r<R,\\
=\infty, & r>R.
\end{cases}
\end{equation*}

It is well known that in $G(M)$
\begin{itemize}
\item $m_{ij}(n)$ equals to the number of paths of length $n$ connecting $i$ to $j$.
\end{itemize}
 Following \cite{Ver-Jo67}, for each $n\in\bbn$ we will consider the following coefficients:
\begin{itemize}
\item First entrance to $j$: $f_{ij}(n)$ equals  the number of paths of length $n$ connecting $i$ to $j$, without appearance of $j$ in between.
    \item Last exit of $i$: $\ell_{ij}(n)$ equals the number of paths of length $n$ connecting $i$ to $j$, without appearance of $i$ in between.

\end{itemize}
 Clearly $f_{ii}(n)=\ell_{ii}(n)$ for each $i\in\P$. Also it will be useful to introduce
\begin{itemize}

        \item First entrance to $\P'\subset \P$: for a nonempty $\P'\subset\P$ and $j\in\P'$, $g^{\P'}_{ij}(n)$ equals the number of paths of length $n$ connecting $i$ to $j$, without appearance of any element of $\P'$ in between.
\end{itemize}

  The first entrance to $\P'\subset \P$ will provide us a new type of a generating function used in \eqref{a:500} and its applications.

\begin{Remark}\label{r:1}Let us denote by $\Phi_{ij}$, $\Lambda_{ij}$ the radius of convergence of the power series $F_{ij}(z) = \sum_{n\ge 1} f_{ij}(n) z^n$, $L_{ij}(z) = \sum_{n\ge 1} \ell_{ij}(n) z^n$. Since $f_{ij}(n)\le m_{ij}(n)$, $\ell_{ij}(n)\le m_{ij}(n)$ for each $n\in\bbn$ and each $i,j\in\P$, we always have $R\le \Phi_{ij}$, $R\le \Lambda_{ij}$.
\end{Remark}

Proposition~\ref{p:1} has been stated in \cite{Rue03}. Since the argument showing the part (i) presented in \cite{Rue03} is not correct, we offer our own version of its proof.

\begin{Proposition}\label{p:1}\cite[Proposition 2.6]{Rue03} Let $(T,\P,M)\in\CPM^*$, consider the graph $G=G(M)$, $R=\lambda_M^{-1}$.
\begin{itemize}
\item[(i)] If there is a vertex $j$ such that $R=\Phi_{jj}$ then there exists a strongly connected subgraph $G'\subsetneq G$ such that $h(G')=h(G)$.
    \item[(ii)] If there is a vertex $j$ such that $R<\Phi_{jj}$ then for all proper strongly connected subgraphs $G'$ one has $h(G')<h(G)$.
        \item[(iii)] If there is a vertex $j$ such that $R<\Phi_{jj}$ then $R<\Phi_{ii}$ for all $i$.
    \end{itemize}
 \end{Proposition}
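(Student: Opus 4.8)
\textbf{Proof plan for Proposition~\ref{p:1}.}

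The plan is to work with the standard Vere-Jones renewal-type identities relating the generating functions $M_{jj}$, $F_{jj}$, $L_{ij}$, $F_{ij}$, and to exploit the combinatorial structure of the graph $G=G(M)$ coming from the interval map. Recall the basic identities: for every vertex $j$,
\begin{equation*}
M_{jj}(z) = \frac{1}{1-F_{jj}(z)}\qquad\text{on }0\le z<R,
\end{equation*}
and $F_{jj}(R)\le 1$ always, with $R=\Phi_{jj}$ precisely when the power series $F_{jj}$ has radius of convergence $R$. For part (i), assume $R=\Phi_{jj}$ for some vertex $j$. The idea is to produce a proper strongly connected subgraph $G'$ with $h(G')=h(G)=\log\lambda_M$, i.e.\ with Perron value still $\lambda_M$. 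First I would distinguish two cases according to whether $F_{jj}(R)=1$ or $F_{jj}(R)<1$. If $F_{jj}(R)<1$, then since $\Phi_{jj}=R$ we can find, for each $\eps>0$, a finite sub-collection of first-return loops at $j$ whose generating polynomial $F'(z)$ satisfies $F'(R+\delta)>1$ for some small $\delta>0$; the strongly connected graph $G'$ generated by those finitely many loops (all passing through $j$) is a proper subgraph once we note that $G$ is infinite (being the graph of a map in $\CPM$ with a countably infinite Markov partition), and $M'_{jj}(z)=1/(1-F'(z))$ forces the radius of convergence of $M'_{jj}$ down to at most $R+\delta$, hence $\lambda_{M'}\ge (R+\delta)^{-1}$. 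Letting $\delta\downarrow 0$ and taking a slightly more careful exhaustion actually gives $h(G')=h(G)$ for a suitable choice. If instead $F_{jj}(R)=1$, then $R$ is already the radius of convergence of $F_{jj}$ (given), and one exhausts $G$ by an increasing sequence of finite strongly connected subgraphs $G_k\ni j$; the corresponding first-return polynomials $F_k(z)\nearrow F_{jj}(z)$ pointwise, so $F_k(R)\to 1$, and by monotonicity/convexity each $F_k$ has a root of $F_k(z)=1$ at some $z_k\ge R$ with $z_k\downarrow R$, giving $\lambda_{M_k}=z_k^{-1}\uparrow \lambda_M$; we then pick $G'=G_k$ for $k$ large if the supremum is attained, or argue that $h(G)=\sup_k h(G_k)$ already shows no single proper subgraph beats it while one of them achieves it — here one must be slightly careful and may instead delete a single vertex not on a fixed long loop, producing a proper subgraph still carrying entropy arbitrarily close to, in fact equal to, $h(G)$ because the deleted vertex was inessential for the supremum. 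This last point is the delicate one.

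For part (ii), assume $R<\Phi_{jj}$ for some $j$, and let $G'\subsetneq G$ be any proper strongly connected subgraph. Pick a vertex $v\in G\setminus G'$. The point is that every first-return loop at $j$ that visits $v$ is destroyed in $G'$, so if $F'_{jj}$ denotes the first-return generating function computed inside $G'$, then $F'_{jj}(z)\le F_{jj}(z)$ with strict inequality in enough coefficients; more to the point, $F'_{jj}$ converges on a disc of radius $\ge \Phi_{jj}>R$ as well (it is dominated by $F_{jj}$), and in fact $F'_{jj}(R)\le F_{jj}(R)<\infty$. If $v$ lies on \emph{some} first-return loop at $j$ — which we can arrange since $G$ is strongly connected, possibly after first reducing to the case $j\in G'$ by replacing $j$ with a vertex of $G'$ and invoking part (iii) to know $R<\Phi_{ii}$ holds at that vertex too — then $F'_{jj}(R)<F_{jj}(R)$ strictly. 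Now if we had $h(G')=h(G)$, i.e.\ $\lambda_{M'}=\lambda_M$, then $R'=R$ is the radius of convergence of $M'_{jj}=1/(1-F'_{jj})$, which forces $F'_{jj}(R)\ge 1$... but this does not immediately contradict $F'_{jj}(R)<F_{jj}(R)$ unless we know $F_{jj}(R)\le 1$. Since indeed $F_{jj}(R)\le 1$ always, we get $F'_{jj}(R)<F_{jj}(R)\le 1$, so $M'_{jj}(R)=1/(1-F'_{jj}(R))<\infty$, whence the radius of convergence of $M'_{jj}$ is strictly larger than $R$, i.e.\ $\lambda_{M'}<\lambda_M$ — contradiction. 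Hence $h(G')<h(G)$. The remaining wrinkle is ensuring $v$ can be chosen on a first-return loop at $j$; if $G'$ omits only vertices that lie on \emph{no} first-return loop at $j$, those vertices are nonetheless reachable from and to $j$ (strong connectivity of $G$), so a shortest cycle through $j$ and $v$ is itself a first-return loop at $j$ after excising any intermediate returns to $j$ — one checks that excision cannot eliminate $v$ from \emph{every} such loop.

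For part (iii), suppose $R<\Phi_{jj}$ and let $i$ be arbitrary. Use the standard decomposition of an $i$-loop through a first passage to $j$ and a last return from $j$: there are constants (depending on a fixed path $i\to j$ and $j\to i$) and the inequality
\begin{equation*}
F_{ii}(z)\ \le\ \text{(polynomial)} + L_{ij}(z)\,\frac{1}{1-F_{jj}(z)}\,F_{ji}(z)
\end{equation*}
valid where the right side makes sense; more cleanly, $m_{jj}(n)\ge c\, m_{ii}(n-p-q)$ for a fixed path $i\to j$ of length $p$ and $j\to i$ of length $q$, from which $\Phi_{ii}\le$ something, while in the other direction using $R<\Phi_{jj}$ one shows $M_{ii}(R)<\infty$: indeed $M_{ii}(z)\le M_{ij}(z)^2/m_{ii}$-type bounds reduce finiteness of $M_{ii}(R)$ to finiteness of $M_{jj}(R)$, and $M_{jj}(R)=1/(1-F_{jj}(R))<\infty$ because $R<\Phi_{jj}$ gives $F_{jj}(R)<1$ — here we again use $F_{jj}(R)\le 1$ together with convergence past $R$ forcing strict inequality, or simply note $F_{jj}$ is finite at $R$ and strictly increasing so $F_{jj}(R)<\lim_{z\to R'} F_{jj}(z)\le 1$ where $R'=\min(\Phi_{jj},1/\text{(something)})$... the cleanest route is: $R<\Phi_{jj}$ $\Rightarrow$ $F_{jj}(R)<1$ (since $F_{jj}(R)\le 1$ and if $F_{jj}(R)=1$ then $R$ would be forced to be the radius where $M_{jj}$ blows up, consistent, but then a standard argument of Vere-Jones shows $R=\Phi_{jj}$, contradiction) $\Rightarrow$ $M_{jj}(R)<\infty$ $\Rightarrow$ $M_{ii}(R)<\infty$ $\Rightarrow$ $\Lambda_{ii}\ge$, and then $F_{ii}(R)\le M_{ii}(R)^{-1}$-complement $<1$, so $R<\Phi_{ii}$.

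\textbf{Main obstacle.} The genuinely delicate step is part (i) in the borderline case $F_{jj}(R)=1$ (the $R$-recurrent situation): constructing a \emph{proper} strongly connected subgraph whose Gurevich entropy exactly equals, rather than merely approximates, $h(G)$. The approximation $h(G)=\sup_k h(G_k)$ over finite exhausting subgraphs is immediate, but upgrading ``supremum'' to ``attained by a single proper subgraph'' requires identifying a vertex whose removal leaves the entropy unchanged — which is exactly where Ruelle's original argument was flawed. My plan there is to exploit that in the $R$-recurrent case the $R$-subinvariant vector is $R$-invariant and, via the renewal equation, to locate a vertex carrying zero ``$R$-mass'' relative to some comparison, or alternatively to show that if \emph{every} single-vertex deletion strictly lowered the entropy then $F_{jj}$ would have radius of convergence exactly $R$ in a way incompatible with $R$-recurrence unless $G$ itself is finite; since $G$ is infinite, some vertex is removable. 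Making that dichotomy precise is the crux of the proof.
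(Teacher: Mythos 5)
Your plan diverges from the paper's proof at every part, and unfortunately each divergence contains a real problem. The central difficulty of part (i) --- producing a single proper strongly connected subgraph that \emph{attains}, rather than merely approximates, $h(G)$ --- is exactly the point you leave open: your exhaustion by finite loop-generated subgraphs only gives $h(G_k)\uparrow h(G)$, and the proposed dichotomy (``if every single-vertex deletion strictly lowered the entropy then \dots'') is never made precise; you concede yourself that this is the crux. The paper closes it by a different device: fixing $j$ with $R=\Phi_{jj}$ and an arbitrary $i\neq j$, it splits the first-return loops at $j$ into those avoiding $i$ and those through $i$. If the former already carry the full growth rate $\lambda_M$, one deletes $i$; otherwise a convolution identity for the loops through $i$ shows that either the $j$-avoiding paths from $i$ to $i$ carry rate $\lambda_M$ (so one deletes $j$), or $\limsup_n[f_{ii}(n)]^{1/n}=\lambda_M$, i.e.\ $R=\Phi_{ii}$. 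Since $i$ was arbitrary the last alternative gives $R=\Phi_{ii}$ for \emph{all} $i$, and the existence of the subgraph is then exactly Salama's Theorem 2.2, an external input you never invoke. Without it, your case $F_{jj}(R)=1$ does not close.

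Two of your generating-function steps are moreover false as stated. In (iii) you argue $R<\Phi_{jj}\Rightarrow F_{jj}(R)<1$ and later $F_{ii}(R)<1\Rightarrow R<\Phi_{ii}$; the first fails for strongly recurrent matrices (where $R<\Phi_{jj}$ and $F_{jj}(R)=1$, see Table 1) and the second fails for transient ones (where $F_{ii}(R)<1$ and $R=\Phi_{ii}$) --- the value of $F_{ii}$ at $R$ simply does not determine the radius of convergence $\Phi_{ii}$. The paper instead gets (iii) in one line as a corollary of (i) and (ii): if $R<\Phi_{jj}$ but $R=\Phi_{ii}$ for some $i$, then (i) would produce a proper subgraph of full entropy, contradicting (ii). Similarly, in your (ii) the inference from $M'_{jj}(R)<\infty$ to ``the radius of convergence of $M'_{jj}$ is strictly larger than $R$'' is a non sequitur: a nonnegative power series can converge at its radius of convergence (again, that is precisely the transient case). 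The repair, for which you already have the ingredients, is that $F'_{jj}$ converges on $[0,\Phi_{jj})$ with $\Phi_{jj}>R$, so $F'_{jj}(R)<1$ extends by continuity to $F'_{jj}(z_1)<1$ for some $z_1>R$, whence $M'_{jj}(z_1)=\sum_{k\ge 0}F'_{jj}(z_1)^k<\infty$ and $\lambda_{M'}\le 1/z_1<\lambda_M$. (The paper itself does not reprove (ii); it cites Ruette.)
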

 \begin{proof}For the proof of part (ii) see \cite{Rue03}.

 Let us prove (i).
  Fix a vertex $j\in\P$ for which $R=\Phi_{jj}$ and choose arbitrary $i\neq j$. We can write

\begin{equation}\label{e:51}f_{jj}(n)=~_if_{jj}(n)+ ~^if_{jj}(n),\end{equation}
where $_if_{jj}(n)$, resp.\ $^if_{jj}(n)$ denotes the number of $f_{jj}$-paths of length $n$ that do not contain $i$, resp.\ contain $i$.\newline
\noindent {\bf I.} If $\limsup_{n\to\infty}[_if_{jj}(n)]^{1/n}=\lambda_M$,
then there is nothing to prove. \newline
\noindent {\bf II.} Assume that $\limsup_{n\to\infty}[_if_{jj}(n)]^{1/n}<\lambda_M$. Then by our assumption and \eqref{e:51}

\begin{equation}\label{e:49}\limsup_{n\to\infty}[^if_{jj}(n)]^{1/n}=\lambda_M.
\end{equation}
Let us denote $g_{ij}(n)$ the number of paths of length $n$ connecting $i$ to $j$, without appearance of $i,j$ after the initial $i$ and before the final $j$. If we denote $^{1,j}f_{ii}(n)$ the number of $f_{ii}$-paths of length $n$ connecting $i$ to $i$ with exactly one appearance of $j$ after the initial $i$ and before the final $i$, we can write for $n\ge 2$ (the coefficients $_jm_{ii}(n)$ are defined analogously as $_jf_{ii}(n)$ - compare the proof of Theorem \ref{t:12})

\begin{align}\label{e:50}^if_{jj}(n) &=\sum_{m=2}^n\sum_{k=1}^{m-1}g_{ji}(k)~_jm_{ii}(n-m)g_{ij}(m-k) \\
&=\sum_{m=2}^n {_j}m_{ii}(n-m)\sum_{k=1}^{m-1}g_{ji}(k)~g_{ij}(m-k)\nonumber\\
&=\sum_{m=2}^n {_j}m_{ii}(n-m)~ ^{1,i}f_{jj}(m)
=\sum_{m=2}^n {_j}m_{ii}(n-m)~ ^{1,j}f_{ii}(m)\nonumber.
\end{align}
By the formula of \cite[Lemma 4.3.6]{ALM00} and our assumption \eqref{e:49}, for arbitrary $i\in\P\setminus\{j\}$ we obtain from \eqref{e:50} either

\begin{equation}\label{e:5}\limsup_{n}[{_j}m_{ii}(n)]^{1/n}=\lambda_M\end{equation}
or
\begin{equation}\label{e:6}\limsup_{n\to\infty}~[^{1,j}f_{ii}(n)]^{1/n}=\lambda_M.
\end{equation}
If \eqref{e:5} is fulfilled for some $i$ the existence of a strongly connected subgraph $G'\subsetneq G$ such that $h(G')=h(G)$ immediately follows. Otherwise, since

$$\limsup_{n\to\infty}~[^{1,j}f_{ii}(n)]^{1/n}\le \limsup_{n\to\infty}~[f_{ii}(n)]^{1/n},$$
we get  $R=\Phi_{ii}$ for each $i\in\P$ and the conclusion follows from \cite[Theorem 2.2]{Sal88}.
The assertion (iii) immediately follows from (i) and (ii).
\end{proof}

The behavior of the series $M_{ij}(z)$, $F_{ij}(z)$ for $z=R$ was used in the Vere-Jones classification of irreducible aperiodic matrices \cite{Ver-Jo67}. Vere-Jones originally distinguished $R$-{\it transient}, {\it null $R$-recurrent} and {\it positive $R$-recurrent} case. Later on, the classification was refined by Ruette in \cite{Rue03}, who added {\it strongly positive $R$-recurrent} case.  All is summarized in Table 1 which applies independently of the sites $i, j \in \P$ for $M$ irreducible - compare the last row of Table 1 and Proposition~\ref{p:1}. We call corresponding classes of matrices {\it transient, null recurrent, weakly recurrent, strongly recurrent}. The last three, resp.\ two  possibilities will occasionally be summarized by '$M$ is recurrent', resp.\ '$M$ is positive recurrent'.
\begin{quote}
\vline
\begin{tabular}{c|c|c|c|c|c|c|c|c|c|c|c|c|c}

\hline
& transient & null & weakly & strongly \\
& & recurrent & recurrent & recurrent\\
\hline
$F_{ii}(R)$ & $< 1$ & $= 1$ & $= 1$ & $= 1$ \\[2mm]
\hline
$F'_{ii}(R)$ & $\le \infty$ & $\infty$ & $< \infty$
 & $< \infty$ \\[2mm]
 \hline
$M_{ij}(R)$ & $< \infty$ & $= \infty$ & $= \infty$
 & $= \infty$ \\[2mm]
 \hline
$\lim_{n \to\infty} m_{ij}(n) R^n$ & $=0$ & $=0$  & $\lambda_{ij}\in (0,\infty)$
& $\lambda_{ij}\in (0,\infty)$ \\[2mm]
\hline
for all $i$ & $R = \Phi_{ii}$ & $R=\Phi_{ii}$ & $R = \Phi_{ii}$ & $R < \Phi_{ii}$\\[2mm]
\hline
\end{tabular}
\vline
\vskip2mm
\centerline{\bf Table 1.}
\end{quote}

\subsubsection{Salama's criteria}
There are geometrical criteria - see \cite{Sal88} and also \cite{Rue03} - for cases of the Vere-Jones classification to apply depending on whether the underlying strongly connected directed graph can be enlarged/reduced (in the class of strongly connected directed graphs) without changing the entropy. We will use some of them in Section~\ref{s:7}.

\begin{Theorem}\label{t:1}\cite{Sal88,Rue03}~The following are true:
\begin{itemize}
\item[(i)] A graph $G$ is transient if and only if there is a graph $G'$ such that $G\subsetneq G'$ and $h(G) = h(G')$.
    \item[(ii)] $G$ is strongly recurrent if and only if $h(G_0) < h(G)$ for any $G_0\subsetneq G$.
    \item[(iii)] $G$ is recurrent but not strongly recurrent if and only if there exists $G_0\subsetneq G$ with $h(G_0) = h(G)$, but $h(G) < h(G_1)$ for every $G_1\supsetneq G$.

\end{itemize}
\end{Theorem}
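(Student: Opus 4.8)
The plan is to read the three equivalences off two structural facts, where --- as in the discussion preceding Theorem~\ref{t:1} --- all graphs are taken strongly connected and entropy and Perron value are identified via Gurevich's formula $h(\cdot)=\log\lambda_{(\cdot)}$ (Proposition~\ref{p:4}). Write $R=\lambda_M^{-1}$. The two facts are: (A) $G$ possesses a proper subgraph of the same entropy iff $G$ is not strongly recurrent; (B) $G$ possesses a proper supergraph of the same entropy iff $G$ is transient. Statement (i) is exactly (B). Statement (ii) is (A) rephrased: a subgraph never has larger entropy than $G$, so ``every $G_0\subsetneq G$ has $h(G_0)<h(G)$'' is the negation of ``some $G_0\subsetneq G$ has $h(G_0)=h(G)$'', which by (A) is the negation of ``$G$ is not strongly recurrent''. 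Statement (iii) follows from (A), (B) and the fact that the Vere-Jones classes of Table~1 form a trichotomy: being recurrent but not strongly recurrent means being neither transient nor strongly recurrent, which by (A)--(B) means precisely that $G$ has some proper subgraph of equal entropy but no proper supergraph of equal entropy; since a supergraph never has smaller entropy than $G$, the latter is the same as $h(G)<h(G_1)$ for every $G_1\supsetneq G$.

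Fact (A) is essentially Proposition~\ref{p:1}. If $G$ is not strongly recurrent, then by the last row of Table~1 (with Proposition~\ref{p:1}(iii)) one has $R=\Phi_{jj}$ for some vertex $j$, and Proposition~\ref{p:1}(i) provides a proper subgraph $G'\subsetneq G$ with $h(G')=h(G)$. If $G$ is strongly recurrent, then $R<\Phi_{jj}$ for some $j$, hence $R<\Phi_{ii}$ for all $i$ by Proposition~\ref{p:1}(iii), so Proposition~\ref{p:1}(ii) gives $h(G')<h(G)$ for every proper subgraph $G'\subsetneq G$. For the easy half of (B), suppose $G\subsetneq G'$ with $h(G')=h(G)$; then $R>0$ is their common reciprocal Perron value and $R$ is the common radius of convergence of the series $M'_{ij}(z)$. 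Fix a vertex $i$ of $G$. From the renewal identity $M_{ii}(z)=(1-F_{ii}(z))^{-1}$, valid as power series in any strongly connected graph, and $1\le M^{G'}_{ii}(z)<\infty$ for $z<R$, we get $F^{G'}_{ii}(z)<1$ for $z<R$, hence $F^{G'}_{ii}(R)\le 1$ by monotone convergence. Since $G'$ is strongly connected and strictly larger than $G$, a loop at $i$ of minimal length among those meeting $G'\setminus G$ is a first-entrance loop at $i$, of some length $n_0$, so $F^{G'}_{ii}(R)-F^{G}_{ii}(R)\ge R^{n_0}>0$. Thus $F^{G}_{ii}(R)<1$, and $G$ is transient by Table~1.

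The constructive half of (B) is the step I expect to be the real obstacle. Assume $G$ is transient and fix a vertex, say $1$, so $F_{11}(R)<1$, while $\Phi_{11}=R$ by the transient row of Table~1, so the series $\sum_n f_{11}(n)R^n=F_{11}(R)$ converges. Since $T\in\CPM$ forces $\lambda_M>1$, we have $R\in(0,1)$, so there is $k\in\bbn$ with $R^{k+1}<1-F_{11}(R)$. Let $G'$ arise from $G$ by adjoining $k$ new vertices $0_1,\dots,0_k$ and the detour edges $1\to 0_1$, $0_1\to 0_2$, \dots, $0_k\to 1$. Then $G'\supsetneq G$ is strongly connected (the $0_i$ lie on a cycle through $1$) and aperiodic: the loop lengths at $1$ now include $k+1$, still with gcd $1$, and the loop lengths at each $0_i$ are $k+1$ plus the closed-walk lengths at $1$, again with gcd $1$. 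As every occurrence of a vertex $0_i$ along a walk in $G'$ is forced to be preceded by $1,0_1,\dots,0_{i-1}$, the only first-entrance loop at $1$ that leaves $G$ is the detour itself, whence $F^{G'}_{11}(z)=F^{G}_{11}(z)+z^{k+1}$ and therefore $F^{G'}_{11}(z)\le F^{G}_{11}(R)+R^{k+1}<1$ for all $0\le z\le R$. Expanding $M^{G'}_{11}(z)=(1-F^{G'}_{11}(z))^{-1}=\sum_{j\ge 0}(F^{G'}_{11}(z))^j$ and rearranging the nonnegative double series shows $\sum_n m^{G'}_{11}(n)z^n$ converges for $z\le R$, so the common radius of convergence $\lambda_{M'}^{-1}$ of $M'$ is $\ge R$, i.e.\ $\lambda_{M'}\le\lambda_M$; the reverse inequality is immediate because $G\subsetneq G'$. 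Hence $h(G')=h(G)$, which finishes (B) and therefore the theorem. (A version not tied to $\CPM$ would also have to dispose of the degenerate range $\lambda_M\le 1$, which a minor modification of the detour construction handles.)
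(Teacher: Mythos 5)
Your argument is correct, but there is nothing in the paper to compare it against: Theorem~\ref{t:1} is imported verbatim from Salama and Ruette and the paper offers no proof of it, so you have in effect reconstructed the cited result. Your reduction to the two facts (A) and (B) is sound, and the two halves have genuinely different status. The subgraph criteria --- statement (ii) and the subgraph half of (iii) --- you correctly read off from the paper's own Proposition~\ref{p:1} combined with the last row of Table~1 (not strongly recurrent $\Leftrightarrow R=\Phi_{jj}$), plus monotonicity of $h$ under inclusion; this is exactly where the paper's machinery already does the work (note, though, that Proposition~\ref{p:1}(i) and that row of Table~1 themselves rest on \cite{Sal88,Rue03}, so your proof is not independent of those sources, merely of the theorem as packaged). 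The supergraph criterion (i) is where you supply genuinely new content: the renewal identity $M_{ii}(z)=(1-F_{ii}(z))^{-1}$ gives the implication from ``equal-entropy supergraph exists'' to $F_{ii}(R)<1$, and the detour construction (adjoining a single new first-return loop of length $k+1$ at a vertex, with $R^{k+1}<1-F_{11}(R)$, then summing $\sum_j (F^{G'}_{11}(R))^j<\infty$ to bound $\lambda_{M'}$ from above) is essentially Salama's original argument and is carried out correctly, including the checks of strong connectedness, aperiodicity, and the identification of the unique new first-return loop. Two small points worth making explicit if this were to be written up: all sub- and supergraphs must be taken strongly connected for the statements and for Proposition~\ref{p:1} to apply (you say this, but statement (ii) of the theorem as printed does not); and the requirement $R<1$ for the detour to exist is automatic here, since an infinite strongly connected graph always has $\lambda_M>1$, so the parenthetical about the degenerate range is not actually needed in the $\CPM$ setting.
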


\subsubsection{Further useful facts}

 In the whole paper we are interested in nonzero nonnegative solutions of equation \eqref{e:2}. Analogously, in the next proposition we consider nonzero nonnegative {\it subinvariant} $\lambda$-solutions $v=(v_i)_{i\in\P}$ for a matrix $M$,
\ie satisfying the inequality $Mv\le \lambda v$.

\begin{Theorem}\label{t:2}\cite[Theorem 4.1]{Ver-Jo67} Let $M=(m_{ij})_{i,j\in\P}$ be irreducible. There is no subinvariant $\lambda$-solution for $\lambda<\lambda_M$. If $M$ is transient there are infinitely many linearly independent subinvariant $\lambda_M$-solutions. If $M$ is recurrent there is a unique subinvariant $\lambda_M$-solution which is in fact $\lambda_M$-solution of equation \eqref{e:2} proportional to the vector $(F_{ij}(R))_{i\in\P}$ ($j\in\P$ fixed), $R=\lambda_M^{-1}$. \end{Theorem}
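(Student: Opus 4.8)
The plan is to follow Vere-Jones' original argument, organizing it around the generating-function identities between $m_{ij}(n)$, $f_{ij}(n)$ and $\ell_{ij}(n)$. First I would prove the non-existence statement: suppose $v = (v_i)_{i\in\P}$ is a nonzero nonnegative subinvariant $\lambda$-solution, so $\sum_j m_{ij} v_j \le \lambda v_i$ for all $i$. Since $M$ is irreducible, the remark after Definition~\ref{d:3} (or a direct induction) gives $v_i > 0$ for all $i$; iterating the subinvariance inequality yields $\sum_j m_{ij}(n) v_j \le \lambda^n v_i$ for all $n$, hence in particular $m_{ii}(n) v_i \le \lambda^n v_i$, i.e. $m_{ii}(n)^{1/n} \le \lambda$. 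Letting $n\to\infty$ and using \eqref{e:13} from Proposition~\ref{p:2}(i) gives $\lambda_M \le \lambda$, so no subinvariant $\lambda$-solution exists for $\lambda < \lambda_M$. This also shows that any subinvariant solution that does exist must have parameter exactly $\lambda_M$.

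For the recurrent case I would fix $j\in\P$, set $R = \lambda_M^{-1}$, and define $w_i := F_{ij}(R) = \sum_{n\ge 1} f_{ij}(n) R^n$ (with $w_j = F_{jj}(R) = 1$ by Table~1, since $M$ is recurrent). The key combinatorial identities are the first-entrance decomposition $m_{ij}(n) = \sum_{m=1}^{n} f_{ij}(m)\, m_{jj}(n-m)$ and the "one step back from $i$" relation $f_{ij}(n) = \sum_{k} m_{ik} f_{kj}(n-1) \cdot[\text{$k\neq j$ contributions}] + m_{ij}[n=1]$, more precisely $f_{ij}(n+1) = \sum_{k\neq j} m_{ik} f_{kj}(n)$ for $n\ge 1$ together with $f_{ij}(1) = m_{ij}$. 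Multiplying the latter by $R^{n+1}$ and summing over $n\ge 0$ gives, after rearrangement, $\lambda_M F_{ij}(R) = \sum_{k\neq j} m_{ik} F_{kj}(R) + m_{ij}$. Since $F_{jj}(R) = 1$, the extra term $m_{ij} = m_{ij} F_{jj}(R)$ exactly fills in the missing $k = j$ summand, so $\lambda_M w_i = \sum_{k\in\P} m_{ik} w_k$; this shows $w = (F_{ij}(R))_i$ is a genuine $\lambda_M$-solution of \eqref{e:2}, not merely subinvariant. For uniqueness (up to scalar) I would take any subinvariant $\lambda_M$-solution $v$; from the non-existence argument $v$ is positive, and iterating $\sum_k m_{ik} v_k \le \lambda_M v_i$ with the first-entrance decomposition yields $v_i \ge v_j \cdot F_{ij}(R)$ for all $i$ (summing $f_{ij}(m) R^m$ against the inequality $\sum_k \ell$-type bounds). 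Normalizing so $v_j = 1$, set $\eta_i = v_i - w_i \ge 0$; then $\eta$ is again subinvariant with parameter $\lambda_M$ and $\eta_j = 0$, and applying $\sum_k m_{jk}(n)\eta_k \le \lambda_M^n \eta_j = 0$ together with irreducibility ($m_{jk}(n) > 0$ for some $n$) forces $\eta_k = 0$ for all $k$. Hence $v = w$, proving uniqueness.

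The transient case requires producing infinitely many linearly independent subinvariant $\lambda_M$-solutions. Here I would use that in the transient regime $F_{ii}(R) < 1$, so for each fixed $j$ the vector $w^{(j)} = (F_{ij}(R))_i$ still satisfies $\lambda_M w^{(j)}_i = \sum_{k\neq j} m_{ik} w^{(j)}_k + m_{ij} F_{jj}(R) \le \sum_k m_{ik} w^{(j)}_k$ (the inequality now strict at $i$'s that cover $j$, since $F_{jj}(R) < 1$), i.e. $w^{(j)}$ is genuinely subinvariant but not invariant. For two distinct choices $j_1 \neq j_2$ one checks linear independence via the normalization $w^{(j)}_j = F_{jj}(R) < 1$ versus $w^{(j)}_{j'}$ for $j' \neq j$, and more robustly one gets an infinite independent family by considering truncated/shifted versions; the cleanest route is Vere-Jones' observation that $\sum_n \ell_{ij}(n) R^n = L_{ij}(R)$ gives, for each $j$, a subinvariant solution, and a dimension count using the convergence $M_{ij}(R) < \infty$ (transient row of Table~1) shows the solution space is infinite-dimensional. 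I expect the main obstacle to be this last point: carefully exhibiting infinitely many \emph{linearly independent} solutions in the transient case, rather than just one subinvariant solution per base point, since the naive family $\{w^{(j)}\}_j$ need not obviously be independent and one must invoke the strict transience inequalities (or a Martin-boundary-type argument) to separate them. The recurrent uniqueness, by contrast, is a clean "difference is a nonnegative subinvariant solution vanishing at a point, hence zero by irreducibility" argument and should go through routinely.
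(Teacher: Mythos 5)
The paper offers no proof of Theorem~\ref{t:2} to compare against: it is imported verbatim from \cite[Theorem 4.1]{Ver-Jo67}, so your proposal must be measured against the original argument. Your treatment of the first two claims is the standard one and is correct: positivity of a subinvariant $v$ via irreducibility, iteration of $Mv\le\lambda v$ to get $m_{ii}(n)\le\lambda^n$ and hence $\lambda\ge\lambda_M$ by \eqref{e:13}; then, in the recurrent case, the first-step identity $\lambda_M F_{ij}(R)=m_{ij}+\sum_{k\neq j}m_{ik}F_{kj}(R)$ combined with $F_{jj}(R)=1$ (Table~1) to show $(F_{ij}(R))_{i}$ solves \eqref{e:2}, the comparison $v_i\ge v_jF_{ij}(R)$, and the ``nonnegative subinvariant difference vanishing at $j$ must vanish everywhere'' step for uniqueness. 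One small slip: the opening computation shows only that no subinvariant $\lambda$-solution exists for $\lambda<\lambda_M$; it does not show that every subinvariant solution has parameter exactly $\lambda_M$ (subinvariant $\lambda$-solutions exist for all $\lambda\ge\lambda_M$, e.g.\ $(M_{ij}(1/\lambda))_i$), but the theorem does not need that remark.

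The genuine gap is the one you flag yourself: producing infinitely many \emph{linearly independent} subinvariant $\lambda_M$-solutions in the transient case. Your family $w^{(j)}=(F_{ij}(R))_i$ is subinvariant, but its subinvariance defect is $\lambda_M w^{(j)}_i-\sum_k m_{ik}w^{(j)}_k=\bigl(1-F_{jj}(R)\bigr)m_{ij}$, i.e.\ proportional to the $j$-th \emph{column of $M$}; when columns of $M$ are linearly dependent this argument cannot separate the $w^{(j)}$, and ``the strict transience inequalities'' alone do not close this. The standard repair — essentially Vere-Jones' own — is to take $y^{(j)}_i:=M_{ij}(R)$, which is finite precisely because $M$ is transient (third row of Table~1) and positive by irreducibility. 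Using \eqref{e:11} one computes $\sum_k m_{ik}y^{(j)}_k=\lambda_M\bigl(M_{ij}(R)-\delta_{ij}\bigr)=\lambda_M y^{(j)}_i-\lambda_M\delta_{ij}$, so each $y^{(j)}$ is subinvariant with defect equal to $\lambda_M$ times the $j$-th standard basis vector. If a finite combination $\sum_j c_jy^{(j)}$ vanished, applying $\lambda_M I-M$ would give $\lambda_M c_i=0$ for every $i$, hence all $c_j=0$; this yields the infinite independent family in one line. With that substitution your outline becomes a complete proof.
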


A general statement (a slight adaption of \cite[Theorem 2]{Pr64}) on solvability of equation \eqref{e:2} is as follows:
\begin{Theorem}\label{t:12}~Let $M=(m_{ij})_{i,j\in\P}$ be  irreducible. The system $Mv=\lambda v$ has a nonzero nonnegative solution $v$ if and only if
\begin{itemize}
\item[(a)]$\lambda=\lambda_M$ and $M$ is recurrent, or
\item[(b)] when either $\lambda>\lambda_M$ or\newline
$\lambda=\lambda_M$ and $M$ is transient,\newline
 there is an infinite sequence of indices $K\subset \P$ such that ($z=\lambda^{-1}$)
    \begin{equation}\label{e:15}\lim_{j\to\infty}\lim_{k\to\infty,~k\in K}\frac{\sum_{\alpha=j}^{\infty}m_{i\alpha}~ M_{\alpha k}(z)}{\sum_{\alpha=1}^{\infty}m_{i\alpha}~ M_{\alpha k}(z)}=0
    \end{equation}
    \end{itemize}
    for each $i\in\P$.
\end{Theorem}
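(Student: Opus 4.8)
\textbf{Proof proposal for Theorem~\ref{t:12}.}
The plan is to treat the two directions separately, and within the forward direction to split on whether $\lambda = \lambda_M$ or $\lambda > \lambda_M$. The case $\lambda = \lambda_M$ with $M$ recurrent is already handled by Theorem~\ref{t:2}: recurrence gives a (unique) $\lambda_M$-solution, so (a) is immediately sufficient, and conversely if $\lambda < \lambda_M$ there is no nonnegative subinvariant solution at all, in particular no solution of \eqref{e:2}. Thus the whole content is the equivalence, in the remaining regime ($\lambda > \lambda_M$, or $\lambda = \lambda_M$ with $M$ transient), between solvability of $Mv = \lambda v$ and the tail-decay condition \eqref{e:15}. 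This is exactly the situation of \cite[Theorem 2]{Pr64}, so the strategy is to adapt Pruitt's argument to our infinite-matrix, graph-theoretic setting; the "slight adaptation" is mostly bookkeeping to allow $\lambda = \lambda_M$ in the transient case and to phrase things via the generating functions $M_{\alpha k}(z)$.

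For the direction \eqref{e:15}$\implies$ solvability, the natural construction is the following. Fix $z = \lambda^{-1}$ with $z \le R$. For $k \in K$, consider the truncated vectors $v^{(k)} = (v^{(k)}_i)_{i\in\P}$ given by $v^{(k)}_i = M_{ik}(z)/M_{jk}(z)$ for some fixed reference index $j$ (or, equivalently, a normalized column of $(I - zM)^{-1}$ over a finite exhaustion $\P_n \uparrow \P$). These satisfy an approximate eigen-equation: from $M_{ik}(z) = \delta_{ik} + z\sum_\alpha m_{i\alpha} M_{\alpha k}(z)$ one gets $\sum_\alpha m_{i\alpha} v^{(k)}_\alpha = \lambda(v^{(k)}_i - \delta_{ik}/M_{jk}(z))$, so the defect is concentrated at the single site $i = k$. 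Letting $k \to \infty$ along $K$ and extracting a pointwise-convergent subsequence (possible after normalization, using that the entries are controlled on finite sets), one obtains a nonnegative $v$ with $\sum_\alpha m_{i\alpha} v_\alpha \le \lambda v_i$ for every $i$ — subinvariance — and the role of \eqref{e:15} is precisely to guarantee that the tails $\sum_{\alpha \ge j} m_{i\alpha} v^{(k)}_\alpha$ do not escape to infinity, so that no mass is lost in the limit and the inequality is in fact an equality $Mv = \lambda v$. One must also rule out $v \equiv 0$; irreducibility propagates positivity from any nonzero coordinate, and the normalization $v_j = 1$ keeps $v$ nonzero in the limit.

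For the converse, solvability $\implies$ \eqref{e:15}: given a nonnegative solution $v$ with $\lambda > \lambda_M$ (or $\lambda = \lambda_M$, $M$ transient), iterate $Mv = \lambda v$ to get $M^n v = \lambda^n v$, hence for each fixed $i$, $\sum_\alpha m_{i\alpha}(n) v_\alpha = \lambda^n v_i < \infty$; summing the geometric series in $n$ gives $\sum_\alpha \big(\sum_{n\ge 0} m_{i\alpha}(n) z^n \big) v_\alpha$ finite, i.e. $\sum_\alpha M_{i\alpha}(z) v_\alpha$ converges (here using $z < R$, or $z = R$ with transience so that the diagonal series still converges). Finiteness of this sum forces its tails to vanish: $\lim_{j\to\infty} \sum_{\alpha \ge j} M_{i\alpha}(z) v_\alpha = 0$. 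One then has to convert this statement about $v$-weighted tails of $M_{i\alpha}(z)$ into the quotient statement \eqref{e:15} about $m_{i\alpha} M_{\alpha k}(z)$; this is where the choice of the index sequence $K$ enters — one picks $K$ so that $M_{\alpha k}(z)$ behaves, for $k \in K$ large, comparably to $c_\alpha v_\alpha$ for suitable constants (roughly, $K$ is chosen to "see" the solution $v$ asymptotically, using the relation between columns of the resolvent and $\lambda$-solutions), and then the ratio in \eqref{e:15} is comparable to the ratio of $v$-weighted tails, which tends to $0$.

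The main obstacle is this last conversion step — producing the index set $K$ and controlling the double limit in \eqref{e:15} uniformly in $i$. The inner limit $k \to \infty$ in $K$ must be taken before the outer $j \to \infty$, so one needs, for each fixed truncation level $j$, that $\sum_{\alpha=1}^{j} m_{i\alpha} M_{\alpha k}(z)$ does not become negligible compared to the full sum as $k \to \infty$ — equivalently that the "head" retains a definite fraction of the mass along $K$. Establishing that such a $K$ exists (and that the resulting quotients decay once $j$ is large) is precisely the delicate diagonal/compactness argument in Pruitt's proof; adapting it requires care that the finiteness we extracted ($\sum_\alpha M_{i\alpha}(z) v_\alpha < \infty$) is genuinely available, which is why the hypothesis must exclude the null/weakly/strongly recurrent situations at $\lambda = \lambda_M$, where $M_{ij}(R) = \infty$ and this series diverges — consistent with alternative (a) covering recurrence separately.
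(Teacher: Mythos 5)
Your overall plan---peel off the recurrent case via Theorem~\ref{t:2} and handle the rest by adapting Pruitt's Theorem 2---matches the paper's intent, and your sufficiency sketch (approximate eigenvectors $v^{(k)}_i = M_{ik}(z)/M_{jk}(z)$, Fatou for subinvariance, with \eqref{e:15} supplying the uniform tail control that upgrades subinvariance to equality) is the right mechanism in outline. The necessity direction, however, contains a false step. From $Mv=\lambda v$ and nonnegativity you correctly get $\sum_\alpha m_{i\alpha}(n)v_\alpha=\lambda^n v_i$, but then, with $z=\lambda^{-1}$,
\[
\sum_\alpha M_{i\alpha}(z)\,v_\alpha=\sum_{n\ge 0}z^n\sum_\alpha m_{i\alpha}(n)v_\alpha=\sum_{n\ge 0}(z\lambda)^n v_i=\sum_{n\ge 0}v_i=\infty ,
\]
since irreducibility forces $v_i>0$. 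So the series you claim is finite always diverges, regardless of transience or of $z<R$, and the ``tails vanish'' conclusion you build on it is unavailable. You are conflating two different objects: the quantities appearing in \eqref{e:15} are, for \emph{fixed} $k$, sums $\sum_\alpha m_{i\alpha}M_{\alpha k}(z)$ with no $v$-weighting, and these are finite by the renewal identities (they equal $M_{ik}(z)/z$, resp.\ $M_{ii}(z)/z-1/z$); the $v$-weighted resolvent sum is not. On top of this, you explicitly defer the construction of the index set $K$ and the control of the iterated limit as ``the main obstacle''---but that is the entire content of the necessity half, so even apart from the error the proposal does not establish it.

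For comparison, the paper does not reprove Pruitt's theorem at all. It invokes \cite[Theorem 2]{Pr64} as a black box, in the form of the double-limit condition \eqref{e:18} phrased via the taboo generating functions $_iM_{\alpha k}(z)$ (paths avoiding $i$), and the whole proof consists of the identities (i)--(vi) (last-exit decompositions, $M_{ik}(z)=M_{ii}(z)L_{ik}(z)$, the finiteness $\sum_\alpha m_{i\alpha}M_{\alpha i}(z)=M_{ii}(z)/z-1/z$) showing that the quantity $A(j,k)$ of \eqref{e:18} equals $B(j,k)$, whose double limit vanishes iff \eqref{e:15} holds. If you insist on a self-contained argument, the necessity direction must be run through these taboo quantities precisely because, as computed above, the naive resolvent sums against $v$ diverge at $z=\lambda^{-1}$.
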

\begin{proof}
Following Chung \cite{Chu60} we will use the analogues of the taboo probabilities: For $k\in\P$ define $_km_{ij}(1)=m_{ij}$ and for $n\ge 1$,
        $$_km_{ij}(n+1)=\sum_{\alpha\neq k}m_{i\alpha}~_km_{\alpha j}(n);$$
clearly, $_km_{ij}(n)$ equals to the number of paths of length $n$ connecting $i$ to $j$ with no appearance of $k$ between. Denote also
$^km_{ij}(n)=m_{ij}(n)-_km_{ij}(n)$ the number of paths of length $n$ connecting $i$ to $j$ with at least one appearance of $k$ between.
        The usual convention that $_km_{ij}(0)=\delta_{ij}(1-\delta_{ik})$ will be used. The following identities directly follow from the definitions of the corresponding generating functions - see before Table 1 - or are easy to verify:~For all $i,j,k\in\P$ and $0\le z<R$,
\begin{itemize}
\item[(i)] $M_{ik}(z)=_jM_{ik}(z)+^jM_{ik}(z)$,
\item[(ii)] $_iM_{ik}(z)=L_{ik}(z)$,
\item[(iii)] $^jM_{ik}(z)=M_{ij}(z)L_{jk}(z)$,
\item[(iv)] $M_{ik}(z)=M_{ii}(z)L_{ik}(z)$,
\item[(v)] \cite{Ver-Jo67} for $i\neq k$,
\begin{equation*}\frac{\sum_{\alpha\le j-1}m_{i\alpha}M_{\alpha k}(z)}{M_{ik}(z)}+\frac{\sum_{\alpha\ge j}m_{i\alpha}M_{\alpha k}(z)}{M_{ik}(z)}=\frac{1}{z},\end{equation*}
\item[(vi)] \cite{Ver-Jo67} $\sum_{\alpha\ge 1}m_{i\alpha}M_{\alpha i}(z)=\frac{M_{ii}(z)}{z}-\frac{1}{z}$ is finite.
\end{itemize}
By \cite[Theorem 2]{Pr64} the double limit \eqref{e:15} can be replaced by
\begin{equation}\label{e:18}\lim_{j\to\infty}\lim_{k\to\infty,~k\in K}\frac{\sum_{\alpha=j}^{\infty}m_{i\alpha}~ _iM_{\alpha k}(1/\lambda)}{_iM_{ik}(1/\lambda)}=0.
\end{equation}

Using the identities (i)-(vi) we can write \eqref{e:18} as

\begin{align*}\label{a:1}
A(j,k) &:=\frac{\sum_{\alpha=j}^{\infty}m_{i\alpha}~ _iM_{\alpha k}(z)}{_iM_{ik}(z)}=\frac{\sum_{\alpha=j}^{\infty}m_{i\alpha}~ M_{\alpha k}(z)}{L_{ik}(z)}-\frac{\sum_{\alpha=j}^{\infty}m_{i\alpha}~ ^iM_{\alpha k}(z)}{L_{ik}(z)}\\
&=M_{ii}(z)\frac{\sum_{\alpha=j}^{\infty}m_{i\alpha}~ M_{\alpha k}(z)}{M_{ii}(z)L_{ik}(z)}-\frac{\sum_{\alpha=j}^{\infty}m_{i\alpha}~ M_{\alpha i}(z)L_{ik}(z)}{L_{ik}(z)}\\
&=M_{ii}(z)\frac{\sum_{\alpha=j}^{\infty}m_{i\alpha}~ M_{\alpha k}(z)}{M_{ik}(z)}-\sum_{\alpha=j}^{\infty}m_{i\alpha}~ M_{\alpha i}(z)=:B(j,k).
\end{align*}
Since by (vi), $\lim_{j\to\infty}\sum_{\alpha=j}^{\infty}m_{i\alpha}~ M_{\alpha i}(z)=0$,
using (v) we obtain that

$$\lim_{j\to\infty}\lim_{k\to\infty,~k\in K}A(j,k)=0$$
if and only if

$$
\lim_{j\to\infty}\lim_{k\to\infty,~k\in K}B(j,k)=\lim_{j\to\infty}\lim_{k\to\infty,~k\in K}\frac{\sum_{\alpha=j}^{\infty}m_{i\alpha}~ M_{\alpha k}(z)}
{\sum_{\alpha=1}^{\infty}m_{i\alpha}~ M_{\alpha k}(z)}=0.
$$
The conclusion follows from \cite[Theorem 2]{Pr64}.
\end{proof}

\begin{Corollary}\label{c:1}If for each $i$, $m_{ij}=0$ except for a finite set of $j$ values, then $Mv=\lambda v$ has a nonzero nonnegative solution if and only if $\lambda\ge \lambda_M$.
\end{Corollary}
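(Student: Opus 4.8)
The plan is to read the Corollary off Theorems~\ref{t:2} and~\ref{t:12}; the whole point is that the finite‑support hypothesis on the rows of $M$ makes the double‑limit condition \eqref{e:15} appearing in Theorem~\ref{t:12} automatic.

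For the direction ``$Mv=\lambda v$ has a nonzero nonnegative solution $\Rightarrow\lambda\ge\lambda_M$'' I would simply note that such a solution is in particular a subinvariant $\lambda$-solution, since $Mv=\lambda v\le\lambda v$, and that Theorem~\ref{t:2} forbids subinvariant $\lambda$-solutions when $\lambda<\lambda_M$; hence $\lambda\ge\lambda_M$. This half uses neither the finite‑support hypothesis nor Theorem~\ref{t:12}.

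For the converse, assume $\lambda\ge\lambda_M$ and put $z=\lambda^{-1}$, so $z\le R$ with $R=\lambda_M^{-1}$. If $\lambda=\lambda_M$ and $M$ is recurrent, case~(a) of Theorem~\ref{t:12} applies and there is nothing to do. Otherwise we are exactly in the regime of case~(b) — $\lambda>\lambda_M$, or else $\lambda=\lambda_M$ with $M$ transient — and I would verify \eqref{e:15} with the choice $K=\P$. Fix $i\in\P$; by hypothesis there is $N=N(i)$ with $m_{i\alpha}=0$ for all $\alpha>N$. Then for every $j>N$ the numerator $\sum_{\alpha=j}^{\infty}m_{i\alpha}M_{\alpha k}(z)$ in \eqref{e:15} vanishes identically in $k$, while the denominator $\sum_{\alpha=1}^{\infty}m_{i\alpha}M_{\alpha k}(z)$ is a finite sum of finite nonnegative terms that is strictly positive: irreducibility gives some $\alpha$ with $m_{i\alpha}>0$, and then $M_{\alpha k}(z)\ge m_{\alpha k}(n)z^{n}>0$ for any $n$ with $m_{\alpha k}(n)>0$. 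Hence the quotient in \eqref{e:15} is $0$ for all $j>N(i)$, so the inner limit over $k\in K$ is $0$ for every large $j$ and therefore the outer limit is $0$; condition~(b) of Theorem~\ref{t:12} holds for each $i$, and that theorem then yields a nonzero nonnegative solution of $Mv=\lambda v$.

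I expect no real obstacle beyond assembling the right citations; the single point deserving a moment's care is the finiteness of the generating functions $M_{\alpha k}(z)$ entering \eqref{e:15}. When $\lambda>\lambda_M$ this is immediate, since $z<R$; when $\lambda=\lambda_M$ one invokes that $M$ is transient in this sub‑case, so $M_{\alpha k}(R)<\infty$ by Table~1. Together with irreducibility (used for the strict positivity of the denominator), this is exactly what makes the fraction in \eqref{e:15} well defined, which is all the argument needs.
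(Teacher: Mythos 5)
Your proof is correct and follows exactly the route the paper intends: the corollary is stated without proof immediately after Theorem~\ref{t:12}, precisely because the finite row-support hypothesis makes the numerator of \eqref{e:15} vanish identically for large $j$, which is what you verify, and the necessity of $\lambda\ge\lambda_M$ is Theorem~\ref{t:2}. Your added care about the finiteness of $M_{\alpha k}(z)$ at $z=R$ in the transient case and the strict positivity of the denominator via irreducibility fills in exactly the details the paper leaves implicit.
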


\subsubsection{Useful matrix operations in the Vere-Jones classes}

In order to be able to modify nonnegative matrices in question we will need the following observation. In some cases it will enable us to produce transition matrices of maps from $\CPM$.  Let $E$ be the identity matrix, see
\eqref{e:11}.

\begin{Proposition}\label{p:19}Let $M=(m_{ij})_{i,j\in\P}$ be  irreducible.  For arbitrary pair of positive integer $k$ and nonnegative integer $\ell$ consider the matrix $N=kM+\ell E$. Then
\begin{itemize}
\item[(i)] $\lambda_N=k\lambda_M+\ell$,
\item[(ii)] if for each $i$, $m_{ij}=0$ except for a finite set of $j$ values, the matrix $N$ belongs to the same class of the Vere-Jones classification as the matrix $M$.
\end{itemize}
\end{Proposition}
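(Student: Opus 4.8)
The plan is to analyze the two assertions separately, both being consequences of the generating-function machinery of Section~\ref{s:4}. For part (i), I would compute the powers of $N = kM + \ell E$ directly. Since $M$ and $E$ commute, the binomial expansion gives $N^n = \sum_{p=0}^{n} \binom{n}{p} k^p \ell^{n-p} M^p$, so the diagonal entries satisfy $n_{ii}(n) = \sum_{p=0}^n \binom{n}{p} k^p \ell^{n-p} m_{ii}(p)$ (with $m_{ii}(0)=1$). From this one reads off both inequalities $\lambda_N \le k\lambda_M + \ell$ and $\lambda_N \ge k\lambda_M+\ell$: the upper bound follows by majorizing $m_{ii}(p) \le \lambda_M^p$ (using \eqref{e:13}, valid since $M$ is irreducible aperiodic — note $N$ inherits irreducibility and, because $\ell\ge 0$ could be $0$, one should observe that aperiodicity of $M$ passes to $N$, or argue with $\lambda_N = \limsup n_{ii}(n)^{1/n}$ directly to sidestep Proposition~\ref{p:2}(i)); the matching lower bound follows by keeping the single term $p=n$, giving $n_{ii}(n) \ge k^n m_{ii}(n)$, hence $\lambda_N \ge k\lambda_M$, and refining by keeping terms with $p$ a fixed fraction of $n$ to recover the $+\ell$. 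Cleanly, I would use the substitution $z \mapsto w$ relating the generating functions: $N_{ij}(z) = \sum_n n_{ij}(n) z^n$, and the commuting binomial expansion translates into a clean functional relation after a change of variable, which will also be the right tool for part (ii).

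For part (ii), the finite-row hypothesis ($m_{ij}=0$ except for finitely many $j$, for each $i$) is exactly what makes Corollary~\ref{c:1} applicable, and it also guarantees that $N$ has finite rows. The cleanest route is via Table~1 and the first-entrance generating function $F^N_{ii}$. The key computation is to relate $F^N_{ii}$ to $F^M_{ii}$. A first-entrance path to $i$ in the graph of $N$ decomposes as: at each step we either take one of the $k$ copies of an $M$-edge or one of the $\ell$ loop-edges at the current (non-$i$, except possibly initial) vertex; unwinding this, $F^N_{ii}(z) = F^M_{ii}\bigl(\tfrac{kz}{1-\ell z}\bigr)$ — the loops at intermediate vertices contribute a geometric factor $\sum_{q\ge 0}(\ell z)^q = (1-\ell z)^{-1}$ inserted between consecutive $M$-steps, and each $M$-step carries weight $kz$. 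One must check the boundary cases $\ell=0$ (then the relation is just $F^N_{ii}(z)=F^M_{ii}(kz)$, trivially) and the behavior at the loop at $i$ itself (a loop at $i$ is a length-one first-entrance path, handled by the $M$-edge count already including or the formula absorbing it — I would set this up carefully so the $i$-loops of $N$ are counted correctly in $F^N_{ii}$). With $R_M = \lambda_M^{-1}$ and $R_N = \lambda_N^{-1} = (k\lambda_M+\ell)^{-1}$ from part (i), one verifies $\tfrac{k R_N}{1 - \ell R_N} = R_M$, so that $F^N_{ii}(R_N) = F^M_{ii}(R_M)$ and, differentiating, $(F^N_{ii})'(R_N) = F^M_{ii}{}'(R_M)\cdot \tfrac{k}{(1-\ell R_N)^2}$, a strictly positive finite multiple. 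Reading down the columns of Table~1, the values $F_{ii}(R)$ and the finiteness/infiniteness of $F'_{ii}(R)$ are preserved, and similarly $M_{ij}(R)$ relates to $N_{ij}(R)$ by the analogous substitution (using identity $(iv)$ from the proof of Theorem~\ref{t:12}, $M_{ik}=M_{ii}L_{ik}$, applied in both graphs), so $N$ lands in the same Vere-Jones class as $M$.

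The main obstacle I anticipate is \emph{bookkeeping the loop-edges correctly at the initial/terminal vertex} $i$ in the first-entrance generating function: a first-entrance path from $i$ to $i$ may begin by immediately taking an $\ell$-loop at $i$, which \emph{is} an appearance of $i$, so such a path has already returned — one must be careful that $F^N_{ii}$ counts paths that avoid $i$ strictly between the endpoints, and the $\ell$ self-loops at $i$ themselves contribute $\ell z$ to $F^N_{ii}(z)$ as length-one returns, while self-loops at $i$ in the "interior" are forbidden by definition. The correct statement is $F^N_{ii}(z) = \ell z + F^M_{ii}\bigl(\tfrac{kz}{1-\ell z}\bigr) \cdot (\text{correction})$ — I would rederive this by conditioning on the first step and checking it is consistent with $M_{ii}^N(z) = (1 - F_{ii}^N(z))^{-1}$, where $M^N_{ii}(z) = \sum_n n_{ii}(n)z^n = \sum_n \bigl(\sum_p \binom np k^p\ell^{n-p}m_{ii}(p)\bigr)z^n = M^M_{ii}\bigl(\tfrac{kz}{1-\ell z}\bigr)\cdot\tfrac{1}{1-\ell z}$ (which follows cleanly from the binomial expansion and a change of summation order). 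This last identity for $M^N_{ii}$ is the load-bearing computation; once it is in hand, the relation $F_{ii} = 1 - 1/M_{ii}$ transfers everything about the first two rows of Table~1, and the $M_{ij}(R)$ and $\lim n_{ij}(n)R^n$ rows follow by the same substitution together with part~(i). The finite-rows hypothesis enters to ensure all these generating functions are the genuine generating functions of a graph in $\CPM$-type situations and, via Corollary~\ref{c:1}, that no pathology of the "transient with no summable solution" kind obstructs the argument; for the pure Vere-Jones-class statement it is the substitution identity that does the work.
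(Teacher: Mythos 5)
Your proposal is correct in substance but takes a genuinely different route from the paper. The paper first reduces to $N=M+E$ (treating $k$ as a trivial multiple and $\ell>1$ by induction) and then uses three separate tools: part (i) and the transient/recurrent dichotomy come from the equivalences $Mv\le\lambda v\Leftrightarrow Nv\le(\lambda+1)v$ together with Corollary~\ref{c:1} and Theorem~\ref{t:2}; the null/positive distinction comes from a direct computation showing $\lim_n n_{11}(n)\lambda_N^{-n}=\lim_n m_{11}(n)\lambda_M^{-n}$ via the binomial identity $n_{11}(n)=\sum_p\binom{n}{p}m_{11}(p)$; and the weak/strong distinction is settled by passing to submatrices and invoking Salama's criteria (Theorem~\ref{t:1}). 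Your single substitution identity
\begin{equation*}
N_{ii}(z)=\frac{1}{1-\ell z}\,M_{ii}\!\left(\frac{kz}{1-\ell z}\right),\qquad w(z):=\frac{kz}{1-\ell z},
\end{equation*}
which is correct, replaces all of this at once: it yields $R_N=w^{-1}(R_M)$, hence (i) without Corollary~\ref{c:1} (and so without the finite-row hypothesis, which the paper's proof of (i) silently borrows, and without your somewhat soft ``fixed fraction of $n$'' lower bound, which becomes unnecessary), and it transfers every row of Table 1. The one wrinkle you flag is real: the naive path-counting guess $F^N_{ii}(z)=F^M_{ii}(w(z))$ is inconsistent with the renewal equation; the corrected formula, obtained exactly as you propose from $M_{ii}=(1-F_{ii})^{-1}$, is $F^N_{ii}(z)=\ell z+(1-\ell z)F^M_{ii}(w(z))$. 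Thus $F^N_{ii}(R_N)$ is a convex combination of $1$ and $F^M_{ii}(R_M)$ (so $=1$ versus $<1$ is preserved), $(F^N_{ii})'(R_N)=\frac{k}{1-\ell R_N}(F^M_{ii})'(R_M)$ in the recurrent case, and $\Phi^N_{ii}=w^{-1}(\Phi^M_{ii})$, which together read off all four classes. What the paper's route buys is a geometrically transparent treatment of the weak/strong case via subgraphs; what yours buys is uniformity, independence from the finite-row hypothesis for most of the argument, and the fact that the generating-function identities hold verbatim for periodic irreducible $M$ --- relevant, since the matrices $M(a,b)$ to which the proposition is applied in Section~\ref{s:7} have period $2$.
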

\begin{proof}Both the conclusions clearly hold if $N$ is a multiple of $M$, \ie when $\ell=0$. So to show our statement it is sufficient to verify the case when $N=M+E$.

(i) Since $Mv=\lambda v$ if and only if $Nv=(\lambda+1)v$, property (i) follows from Corollary~\ref{c:1}.

(ii) By our assumption, for each $i$, $n_{ij}=0$ except for a finite set of $j$ values, so Theorem~\ref{t:12} and Corollary~\ref{c:1} can be applied. Notice that for any nonnegative $v$, \begin{equation}\label{e:47}
Mv\le\lambda v\text{ if and only if }Nv\le(\lambda+1)v,\end{equation}
so by Theorem~\ref{t:2}, the matrix $M$ is transient, resp.\ recurrent if and only if $N=M+E$ is transient, resp.\ recurrent. In order to distinguish different recurrent cases we will use Table 1. Since by (i) $\lambda_N=\lambda_M+1$, we can write

\begin{equation}\label{a:1}
\frac{n_{11}(n)}{\lambda_N^n} =
\frac{\sum_{k=0}^{n}\binom{n}{k}m_{11}(k)}
{\sum_{k=0}^{n}\binom{n}{k}\lambda_M^k}
= \underbrace{\frac{\sum_{k=0}^{n_1-1}\binom{n}{k}\frac{m_{11}(k)}{\lambda_M^k}\lambda_M^k}
{\sum_{k=0}^{n}\binom{n}{k}\lambda_M^k}}_{U(n,n_1)}
+
\underbrace{\frac{\sum_{k=n_1}^{n}\binom{n}{k}\frac{m_{11}(k)}{\lambda_M^k}\lambda_M^k}
{\sum_{k=0}^{n}\binom{n}{k}\lambda_M^k}}_{V(n,n_1)}.
\end{equation}
By \eqref{e:13} $\frac{m_{11}(k)}{\lambda_M^k}\le 1$ for each $k$ and we can put $\mu=\lim_{k\to\infty}\frac{m_{11}(k)}{\lambda_M^k}$. For each $\varepsilon>0$ there exists $n_1 \in {\mathbb N}$ such that $\frac{m_{11}(k)}{\lambda_M^k}\in (\mu-\varepsilon,\mu+\varepsilon)$ whenever $k>n_1$. Then using the fact that

$$
\lim_{n\to\infty}U(n,n_1)=0,
\qquad \lim_{n\to\infty}\frac{\sum_{k=n_1}^{n}\binom{n}{k}\lambda_M^k}{\sum_{k=0}^{n}\binom{n}{k}\lambda_M^k}=1,
$$ we can write for any $\delta>0$ and sufficiently large $n=n(\delta)$:

\begin{equation*}\mu-\varepsilon\le \frac{n_{11}(n)}{\lambda_N^n}\le \delta+\mu+\varepsilon,\end{equation*}
hence $\lim_n\frac{n_{11}(n)}{\lambda_N^n}=\lim_n\frac{m_{11}(n)}{\lambda_M^n}=\mu$. By Table 1, $M$ is null, resp.\ positive recurrent if and only if $N$ is null, resp.\ positive recurrent.

Finally, let $M=(m_{ij})_{i,j\in\P}$ be positive recurrent and assume its irreducible submatrix $K=(k_{ij})_{i,j\in\P'}$ for some $\P'\subset \P$, denote $L=K+E$. Then similarly as above we obtain that $\lambda_N=\lambda_M+1$, resp.\ $\lambda_L=\lambda_K+1$. If $M$ is weakly, resp.\ strongly recurrent, then for some $K$, resp.\ for each $K$ we obtain $\lambda_N=\lambda_L$, resp.\ $\lambda_N>\lambda_L$ and Theorem~\ref{t:1} can be applied.

This finishes the proof for $N=M+E$. Now, the case when $N=M+\ell E$, $\ell>1$, can be verified inductively.
\end{proof}

\section{Entropy and the Vere-Jones classification in $\CPM$}\label{s:5}

 The following statement identifies the topological entropy of a map and the Perron value $\lambda_M$ of its transition matrix.

\begin{Proposition}\label{p:7} Let $(T,\P,M)\in\CPM^*$. Then $\lambda_M= e^{\htop(T)}$.
and if there is a summable $\lambda$-solution of equation \eqref{e:2} then $\lambda_M\le \lambda$.
\end{Proposition}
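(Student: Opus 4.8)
The plan is to establish the two assertions separately but with a common thread: bounding $\lambda_M$ from above and below by $e^{\htop(T)}$, and then using the Vere-Jones machinery (specifically Theorem~\ref{t:2}) for the second claim. For the identity $\lambda_M = e^{\htop(T)}$, I would first recall Gurevich's theorem (Proposition~\ref{p:4}): $\log\lambda_M = h(M) = \sup\{\log r(M') : M' \text{ a finite submatrix of } M\}$. So it suffices to relate the entropies of finite submatrices of $M$ to the topological entropy of $T$.

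For the lower bound $\htop(T) \ge \log r(M')$: given a finite submatrix $M'$ supported on a finite set $\P' \subset \P$, the intervals in $\P'$ together with the covering relations (counted with multiplicity $\kappa(i,j)$ via Proposition~\ref{p:11}(ii)) give a finite subsystem. One passes to a genuine subshift of finite type / horseshoe: using the $\kappa(i,j)$ subintervals of monotonicity on which $T$ maps onto the target, one builds, for each path of length $n$ in $G(M')$, a subinterval whose image under $T^n$ covers the terminal element. These give $m'_{ij}(n)$-many disjoint subintervals, so $T^n$ has at least $\sum m'_{ij}(n) \gtrsim r(M')^n$ ``laps'' or a Markov horseshoe of that size; by the standard variational/counting characterization of topological entropy for interval maps (e.g.\ via the growth rate of the number of monotone laps, or Misiurewicz--Szlenk), $\htop(T) \ge \log r(M')$. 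Taking the supremum over $M'$ gives $\htop(T) \ge \log\lambda_M$. For the reverse inequality $\htop(T) \le \log\lambda_M$: the number of laps of $T^n$ (equivalently, a spanning/separated set count) is comparable to $\sum_{i,j \in \P} m_{ij}(n)$ restricted appropriately; more carefully, one covers $[0,1]$ minus a countable set by the partition elements and uses that the number of distinct ``itineraries'' of length $n$ realized by monotone branches is at most $\sum_{i} \sum_{j} m_{ij}(n)$, whose exponential growth rate is $\log\lambda_M$ by Proposition~\ref{p:2}(i). Some care is needed because $\P$ is infinite and the column sums $\sum_i m_{ij}$ need not be bounded (non-operator type), so I would localize: entropy is a supremum over compact invariant subsets, each of which meets only finitely many partition elements up to small error, reducing again to finite submatrices. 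I expect this localization step — making the passage from the infinite partition to finite submatrices rigorous on the entropy side — to be the main obstacle, though it is essentially the content of Gurevich's theorem transported to the interval setting.

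For the second assertion, suppose $v = (v_i)_{i\in\P}$ is a summable $\lambda$-solution of \eqref{e:2}, so $Mv = \lambda v$ with $v$ nonzero, nonnegative (hence positive, as noted after Definition~\ref{d:3}). In particular $v$ is a subinvariant $\lambda$-solution ($Mv \le \lambda v$, with equality). By Theorem~\ref{t:2} applied to the irreducible matrix $M$: there is no subinvariant $\lambda$-solution for $\lambda < \lambda_M$. Since $v$ is one, we must have $\lambda \ge \lambda_M$, i.e.\ $\lambda_M \le \lambda$. This is immediate and requires no further work beyond invoking Theorem~\ref{t:2}; the summability hypothesis is in fact not even needed for this direction, only that $v$ solves \eqref{e:2} — but it does no harm to keep it. Thus both claims follow, the first from Gurevich's theorem plus the lap-counting characterization of interval entropy, the second as a one-line consequence of the Vere-Jones subinvariance bound.
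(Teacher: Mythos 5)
Your treatment of the second assertion is correct and in fact more direct than the paper's: a summable $\lambda$-solution is in particular a subinvariant $\lambda$-solution, and Theorem~\ref{t:2} forbids these for $\lambda<\lambda_M$ (summability indeed plays no role here). The paper instead routes this claim through Theorem~\ref{t:5} and Proposition~\ref{p:5}: the summable solution yields a conjugacy to a constant-slope-$\lambda$ map, which is $\lambda$-Lipschitz, whence $\htop(T)\le\log\lambda$. Likewise your lower bound $\htop(T)\ge \log r(M')$ for finite submatrices, via horseshoes built from the $\kappa(i,j)$ monotone subintervals, is sound; the paper obtains the same inequality $\lambda_M\le e^{\htop(T)}$ by constructing an $m_{jj}(n)$-horseshoe for $T^n$ directly from Proposition~\ref{p:11}(ii) and Proposition~\ref{p:2}(i), which is your argument specialized to diagonal entries.

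The genuine gap is in the direction $\htop(T)\le\log\lambda_M$. The lap-counting (Misiurewicz--Szlenk) characterization of entropy you invoke is precisely what fails for countably piecewise monotone maps --- that failure is the starting point of this paper (see the remarks after Theorem 2 of the introduction and the examples in \cite{MiRa05}, \cite{BoSou11}) --- so it cannot be used here. Your proposed repair, that ``entropy is a supremum over compact invariant subsets, each of which meets only finitely many partition elements up to small error,'' is not correct as stated: $[0,1]$ itself is a compact invariant set meeting every element of $\P$, and nothing forces a general positive-entropy compact invariant set to be covered by finitely many partition elements. The paper closes exactly this hole with two ingredients you are missing: (1) the result of \cite{Bo03} that $\htop(T)$ is the supremum of the entropies of \emph{minimal} subsystems, and (2) the observation that a minimal subsystem $S(\eps)$ of positive entropy is disjoint from the countable, closed, forward-invariant set $P=[0,1]\setminus\bigcup_{i\in\P}i^{\circ}$ (otherwise a whole orbit, hence by minimality all of $S(\eps)$, would lie in the countable set $P$), and therefore, being closed and disjoint from $P$, is covered by finitely many interiors $i_1^{\circ},\dots,i_m^{\circ}$. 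Only then does one obtain a finite submatrix $M'$ with $r(M')\ge e^{\htop(T)-\eps}$ and conclude via Gurevich's theorem (Proposition~\ref{p:4}). The localization you flag as ``the main obstacle'' is thus not a technicality to be expected to work out: it is the substantive content of the proof, and the specific mechanism (minimality forcing finiteness) is absent from your proposal.
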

\begin{proof}
For the first equality, we start by proving $\lambda_M\le e^{\htop(T)}$. We use Proposition~\ref{p:2}(i) and Proposition~\ref{p:11}(ii). By those
statements, $\lambda_M=\lim_n [m_{jj}(n)]^{\frac{1}{n}}$ for any $j\in\P$ and and for each sufficiently large $n$, the interval $j$
contains $m_{jj}(n)$ intervals $j_1,\dots,j_{m_{jj}(n)}$ with
pairwise disjoint interiors such that $T^n(j_i)\supset j_{\ell}$ for all $1\le i,\ell\le
m_{jj}(n)$. Clearly, the map $T^n$ has a $m_{jj}(n)$-horseshoe \cite{Mi79} hence $\htop(T^n)=n\htop(T)\ge \log m_{jj}(n)$ and $e^{\htop(T)}\ge
[m_{jj}(n)]^{\frac{1}{n}}$. Since $n$ can be arbitrarily large, the inequality
$\lambda_M\le e^{\htop(T)}$ follows.

Now we look at the reverse inequality
$\lambda_M\ge e^{\htop(T)}$. A pair $(S,T\vert_{S})$ is a subsystem of $T$ if $S\subset [0,1]$ is closed and $T(S)\subset S$. It has been showed in \cite[Theorem 3.1]{Bo03} that the entropy of $T$ can be expressed as the supremum of entropies of {\em minimal} subsystems. Let us fix a minimal subsystem $(S(\varepsilon),T\vert_{S(\varepsilon)})$ of $T$ for which $\htop(T\vert_{S(\varepsilon)})>\htop(T)-\varepsilon>0$.

\noindent {\bf Claim.}
{\it There are finitely many elements $i_1,\dots,i_m\in\P$ such that $S(\varepsilon)\subset\bigcup_{j=1}^mi_j^{\circ}$.}

\noindent {\it Proof of Claim.}~Let us denote $P=[0,1]\setminus\bigcup_{i\in\P}i^{\circ}$. Then $P$ is closed, at most countable and $T(P)\subset P$. Assume that $x\in P\cap S(\varepsilon)\neq\emptyset$. Then $\orb_T(x)\subset P$ which is impossible for $(S(\varepsilon),T\vert_{S(\varepsilon)})$ minimal of positive topological entropy. If $S(\varepsilon)$ intersected infinitely many elements of $\P$ then, since $S(\varepsilon)$ is closed, it would intersect also $P$, a contradiction. Thus, there are finitely many $i_1,\dots,i_m \in \P$ of the required property.\hskip110mm $\blacksquare$
\\[3mm]
Our claim together with Proposition~\ref{p:12} say that connect-the-dots map of $(S(\varepsilon),T\vert_{S(\varepsilon)})$ is piecewise monotone and the finite submatrix $M'$ of $M$ corresponding to the elements $i_1,\dots,i_m$ satisfies $r(M')\ge e^{\htop(T\vert_{S(\varepsilon)})}$. Now the conclusion follows from Proposition~\ref{p:4}.

The second statement follows from Theorem~\ref{t:5}, Proposition~\ref{p:5} and the fact that topological entropy is a conjugacy invariant: $\lambda_M=e^{\htop(T)}=e^{\htop(S)}\le \lambda$.
\end{proof}

We would like to transfer the Vere-Jones classification to $\CPM$. That is why it is necessary to be sure that a change of Markov partition for the map in question does not change the Vere-Jones type of its transition matrix. This is guaranteed by the following proposition.

Given $T \in \CPM$, consider the family $(\P_{\alpha})_{\alpha}$ of {\it all Markov partitions} for $T$. Write $Q_{\alpha}=[0,1]\setminus\bigcup_{i\in \P_{\alpha}}i^{\circ}$. The minimal Markov partition $\R$ for $T$ consists of the closures of connected components of $[0,1]\setminus\bigcap_{\alpha}Q_{\alpha}$.

\begin{Proposition}\label{p:10}Let $T\in\CPM$ with two Markov partitions $\P$, resp.\ $\Q$ and corresponding matrices $M^{\P}=(m^{\P}_{ij})_{i,j\in\P}$, resp.\ $M^{\Q}=(m^{\Q}_{ij})_{i,j\in\Q}$. The matrices $M^{\P}$ and $M^{\Q}$ belong to the same class of the Vere-Jones classification.\end{Proposition}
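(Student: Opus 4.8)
\textbf{Proof strategy for Proposition~\ref{p:10}.} The plan is to reduce the general case of two arbitrary Markov partitions to the single comparison between a partition and a finer refinement, and then invoke the invariance results already established. First I would observe that it suffices to handle the case where $\P$ refines $\Q$: given any two Markov partitions $\P$ and $\Q$, there is a common refinement $\P'$ (for instance the partition generated by the endpoints of all elements of $\P$ together with those of $\Q$, which is again Markov for $T$ since the Markov property is preserved under taking the partition generated by the union of the break-point sets), and if we can show $M^{\P}$ and $M^{\P'}$ are in the same Vere-Jones class, and likewise $M^{\Q}$ and $M^{\P'}$, we are done by transitivity.

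\textbf{The refinement step.} So assume $\P$ is finer than $\Q$. The key point is that Theorem~\ref{t:5} already tells us, via the equivalence of conditions (ii)--(v), that the \emph{recurrent} Vere-Jones classes are preserved: by Theorem~\ref{t:2} a matrix is recurrent exactly when it admits a (unique, up to scaling) $\lambda_M$-solution of \eqref{e:2}, and Proposition~\ref{p:7} identifies $\lambda_{M^{\P}}=\lambda_{M^{\Q}}=e^{\htop(T)}=:\lambda$; moreover the explicit passage $v_i=\sum_{i'\subset i}u_{i'}$ and its inverse \eqref{e:25} in the proof of Theorem~\ref{t:5} (specialised to $\lambda=e^{\htop(T)}$, which is summable because any such solution is summable here) transport a $\lambda$-solution for one partition to one for the other. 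Hence $M^{\P}$ is recurrent if and only if $M^{\Q}$ is, and consequently $M^{\P}$ is transient if and only if $M^{\Q}$ is.

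\textbf{Distinguishing the recurrent sub-cases.} It remains to separate null recurrent, weakly recurrent, and strongly recurrent. For strong recurrence I would use Theorem~\ref{t:1}(ii): strong recurrence is equivalent to $h(G_0)<h(G)$ for every proper strongly connected subgraph $G_0$, a property of the underlying graph $G(M)$. Here I would argue that subgraphs of $G(M^{\P})$ and of $G(M^{\Q})$ carry the ``same'' entropies in the relevant sense — more precisely, every strongly connected subgraph of $G(M^{\Q})$ corresponds (by taking all $\P$-elements inside the chosen $\Q$-elements, and using \eqref{e:24}) to a strongly connected subgraph of $G(M^{\P})$ with the same Gurevich entropy, and conversely a strongly connected subgraph of $G(M^{\P})$ on a finite vertex set is contained in the $G(M^{\P})$-subgraph induced by the finitely many $\Q$-elements that meet it, whose entropy equals that of the corresponding $\Q$-subgraph by Proposition~\ref{p:4} (both equal the topological entropy of $T$ restricted to the relevant invariant closed set — this is the mechanism used in the proof of Proposition~\ref{p:7}). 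Thus the supremum of entropies of proper subgraphs, compared against $h(G)=\log\lambda$, is the same for both graphs, giving simultaneous strong recurrence. To separate null from weakly recurrent one uses the last-but-one row of Table~1: the dichotomy is whether $\lim_n m_{ij}(n)R^n=0$ or converges to a positive limit, equivalently (by Table~1) whether $F'_{ii}(R)=\infty$ or $<\infty$. I would show $F'_{ii}$ is finite for $M^{\P}$ iff it is finite for $M^{\Q}$ by relating the first-return generating functions under the refinement: a first-return loop at a $\Q$-element $i$ decomposes through the $\P$-elements $i'\subset i$, and \eqref{e:25}--\eqref{e:26} together with the counting identities (i)--(vi) in the proof of Theorem~\ref{t:12} let one compare $F^{\Q}_{ii}(z)$ with a finite combination of $F^{\P}_{i'j'}(z)$'s having the same radius of convergence $R$ and the same finiteness/infiniteness of the derivative at $R$.

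\textbf{Main obstacle.} The transient-versus-recurrent and strongly-recurrent distinctions fall out fairly cleanly from Theorems~\ref{t:5}, \ref{t:2} and \ref{t:1} together with Proposition~\ref{p:7}. The genuinely delicate step is the null-versus-weakly-recurrent separation: unlike the other dichotomies, it is not phrased purely in terms of existence of solutions or subgraph entropies but in terms of the finer asymptotics $m_{ij}(n)R^n\to\lambda_{ij}>0$ versus $\to 0$ (equivalently the finiteness of $F'_{ii}(R)$), and one must check that the bookkeeping between first-return structures of $\P$ and $\Q$ preserves exactly this asymptotic. I would carry this out by fixing a single $\Q$-element $i$, writing the return loops at $i$ as concatenations of $\P$-level paths among the finitely many $\P$-elements contained in $i$ and the paths that leave $i$ entirely, and then applying the generating-function identities of the Theorem~\ref{t:12} proof to conclude that $F^{\Q}_{ii}(z)$ and the corresponding $\P$-level return series share radius of convergence $R$ and simultaneously have finite or infinite derivative at $z=R$; by Table~1 this pins down the same sub-class for both matrices.
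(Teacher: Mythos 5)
Your reduction to the refinement case via a common refinement and transitivity matches the paper, as does your use of Salama's criterion (Theorem~\ref{t:1}) for the weak/strong distinction. But the step you describe as falling out ``fairly cleanly'' --- transient versus recurrent --- rests on a false characterization. You assert that by Theorem~\ref{t:2} a matrix is recurrent exactly when it admits a (unique up to scaling) $\lambda_M$-solution of \eqref{e:2}. Theorem~\ref{t:2} gives only one direction: recurrence implies existence and uniqueness of a subinvariant $\lambda_M$-solution, which is then a genuine solution. Transient matrices can also admit $\lambda_M$-solutions --- that is precisely case (b) of Theorem~\ref{t:12}, and the paper's own examples in Subsections~\ref{ss:13} and~\ref{ss:14} are transient maps admitting positive \emph{summable} $\lambda_M$-solutions. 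Your subsidiary claim that any $e^{\htop(T)}$-solution ``is summable here'' is also false (Proposition~\ref{p:20} exhibits a null recurrent map in $\CPM$ all of whose $\lambda_M$-solutions are non-summable), and without summability the transport $v_i=\sum_{i'\subset i}u_{i'}$ from the proof of Theorem~\ref{t:5} need not even converge. So as written your argument cannot separate transient from recurrent.

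The paper's proof avoids solutions of \eqref{e:2} entirely. Its engine is a pair of elementary counting inequalities: if $\Q$ refines $\P$, $j'\in\Q$, $j'\subset j\in\P$, and $T^{k}(j')\supset j$, then $m^{\Q}_{j'j'}(n)\le m^{\P}_{jj}(n)\le m^{\Q}_{j'j'}(n+k)$ for all $n$. Comparing $\sum_n m(n)\lambda^{-n}$ (third row of Table 1) then settles recurrence versus transience, and comparing $\lim_n m(n)\lambda^{-n}$ (fourth row) settles null versus positive recurrence --- the step you flagged as the main obstacle and proposed to attack through first-return generating functions. That route is genuinely harder than you suggest: a first return to $j'$ in $\Q$ does not correspond to a first return to $j$ in $\P$, so the bookkeeping you sketch would require substantial work, whereas the raw path counts compare directly. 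The same two inequalities, applied to submatrices via the correspondence $\Q'=\{j'\in\Q:j'\subset j\text{ for some }j\in\P'\}$, give the coincidence of subgraph Perron values needed for Theorem~\ref{t:1}. You should rebuild the proof around these inequalities rather than around solvability of \eqref{e:2}.
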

\begin{proof}Since the map $T$ is topologically mixing, each of the matrices $M^{\P}$, $M^{\Q}$ is irreducible and aperiodic. Moreover, by Proposition~\ref{p:7} the value guaranteed in Proposition~\ref{p:2}(i) equals $e^{\htop(T)}$ and so is the same for both the matrices $M^{\P}$, $M^{\Q}$ - denote it $\lambda$.
Let $P=[0,1]\setminus\bigcup_{j\in\P}j^{\circ}$ and $Q=[0,1]\setminus\bigcup_{j\in\Q}j^{\circ}$.

First, let us assume that $P\subset Q$. Fix two elements $j\in\P$, resp.\ $j'\in\Q$ such that $j'\subset j$. Let us consider a path of the length $n$

\begin{equation}\label{e:39}j=j_0\rightarrow^{\P}j_1
\rightarrow^{\P}j_2\cdots\rightarrow^{\P}j_n=j\end{equation}
with respect to $\P$; by Proposition~\ref{p:12} each interval $j_i$ contains $k_i=k^{\P}(j_i,j_{i+1})$ intervals of monotonicity of $T$ - denote them $\iota_i(1),\dots,\iota_i(k_i)$ - such that $T(x)\notin j_{i+1}$ whenever $x\in j_i\setminus\bigcup_{\ell=1}^{k_i}\iota_i$. This implies that

\begin{equation}\label{e:38}\prod_{i=0}^{n-1}k_i
\end{equation}
is the number of paths with respect to $\P$ through the same vertices in order given by \eqref{e:39} and, at the same time, it is an upper bound of a number of paths

$$
j'=j'_0\rightarrow^{\Q}j'_1\rightarrow^{\Q}j'_2\cdots\rightarrow^{\Q}j'_n=j'
$$
with respect to finer $\Q$ such that $j'_i\subset j_i$ for each $i$. Considering all possible paths in \eqref{e:39} and summing their numbers given by \eqref{e:38}, we obtain

\begin{equation}\label{e:19}
m^{\Q}_{j'j'}(n)\le m^{\P}_{jj}(n)
\end{equation}
for each $n$. On the other hand, since $T$ is topologically mixing and Markov, there is a positive integer $\ell=\ell(j,j')$ such that $T^{\ell}(j')\supset j$. It implies for each $n$,

\begin{equation}\label{e:20}m^{\P}_{jj}(n)\le m^{\Q}_{j'j'}(n+k).\end{equation}

Using \eqref{e:19} and \eqref{e:20}, we can write,
$$\sum_{n\ge 0}m^{\Q}_{j'j'}(n)\lambda^{-n}\le \sum_{n\ge 0}m^{\P}_{jj}(n)\lambda^{-n}\le \lambda^k\sum_{n\ge 0}m^{\Q}_{j'j'}(n+k)\lambda^{-n-k}.$$
Hence by the third row of Table 1, $M^{\P}$ is recurrent if and only if $M^{\Q}$ is recurrent. Again from \eqref{e:19} and \eqref{e:20} we can see that
$\lim_nm^{\P}_{jj}(n)\lambda^{-n}$ is positive if and only if $\lim_nm^{\Q}_{j'j'}(n)\lambda^{-n}$ is positive and the fourth row of Table 1 for $R=\lambda^{-1}$ can be applied.

In order to distinguish weak, resp. strong recurrence, for a $\P'\subset\P$ let $\Q'\subset Q$ be such that
\begin{equation}\label{e:14}\Q'=\{j'\in \Q\colon~j'\subset j\text{ for some }j\in\P'\}.\end{equation}
Using \eqref{e:19} and \eqref{e:20} again we can see that the Perron values of the irreducible aperiodic matrices $M^{\P'}$ and $M^{\Q'}$ coincide hence the Gurevich entropies $h(M^{\P})$, $h(M^{\P'})$ are equal if and only if it the case for $h(M^{\Q})$, $h(M^{\Q'})$; now Theorem \ref{t:1}(ii),(iii) applies.

Second, if $P\nsubseteq Q$ and $Q\nsubseteq P$, let us consider the partition $\R$, where any element of $\R$ equals the closure of a connected component of the set $I\setminus (P\cap Q)$. The reader can easily verify that $\R$ is a Markov partition for $T$. By the previous, the pairs of matrices  $M^{\P}$, $M^{\R}$, resp. $M^{\R}$, $M^{\Q}$ belong to the same class of the Vere-Jones classification. So it is true also for the pair $M^{\P}$, $M^{\Q}$.
\end{proof}

\begin{Remark}Let $(T,\P,M)\in\CPM^*$. Applying Proposition~\ref{p:10} in what follows we will call $T$ transient, null recurrent, weakly recurrent or strongly recurrent respectively if it is the case for its transition matrix $M$. The last three, resp.\ two possibilities will occasionally be summarized by '$T$ is recurrent', resp. '$T$ is positive recurrent'.
It is well known that if $T$ is piecewise monotone then it is strongly recurrent \cite[Theorem 0.16]{Wal82}.
\end{Remark}

\section{Linearizability}\label{s:6}
In this section we investigate in more details the set of maps from $\CPM$ that are conjugate to maps of constant slope (linearizable, in particular). Relying on Theorem~\ref{t:5}, Theorem~\ref{t:2} and Proposition~\ref{p:10} our main tools will be local and global perturbations of maps from $\CPM$ resulting to maps from $\CPM$. Some examples illustrating the results achieved in this section will be presented in Section~\ref{s:7}.

We start with an easy but rather useful observation. Its second part will play the key role in our evaluation using centralized perturbation - formula \eqref{a:500} and its applications.

\begin{Proposition}\label{p:3}~Let $(T,\P,M)\in\CPM^*$. \begin{itemize}\item[(i)] If $T$ is leo then any $\lambda$-solution of \eqref{e:2} is summable.
\item[(ii)]Any $\lambda$-solution of \eqref{e:2} satisfies
    \begin{equation*}\forall~\varepsilon\in (0,1/2)\colon~\sum_{j\in \P,j\subset (\varepsilon,1-\varepsilon)}v_j<\infty.\end{equation*}
\end{itemize}
\end{Proposition}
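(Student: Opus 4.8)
The plan is to prove the two parts separately, using the horseshoe/covering structure of a leo map for (i) and a localisation argument for (ii).

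\textbf{Part (i).} Suppose $T$ is leo and let $v=(v_i)_{i\in\P}$ be a $\lambda$-solution of \eqref{e:2}. First I would fix a single element $j_0\in\P$ with $v_{j_0}>0$ (which exists, and indeed all $v_i>0$ by the Remark after Definition~\ref{d:3}). Since $T$ is leo, for the open set $j_0^\circ$ there is an $N$ with $T^N(j_0^\circ)=[0,1]$; hence $T^N(j_0)\supset i$ for \emph{every} $i\in\P$, and moreover, because $T^N|_{j_0}$ is continuous and piecewise monotone (with finitely many laps on $j_0$ by Proposition~\ref{p:12} applied to $T^N$, or rather by the finiteness in Proposition~\ref{p:11}(i)), the intervals $\{i : i\in\P\}$ are covered by $T^N$-images of finitely many subintervals of $j_0$ whose interiors are pairwise disjoint. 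Concretely, $\sum_{i\in\P} m_{j_0 i}(N)$ is finite by Proposition~\ref{p:11}(i), and each term is $\ge 1$, so $\P$ restricted to those $i$ with $m_{j_0i}(N)\ge 1$ — which is all of $\P$ — must actually be \emph{finite}? That is too strong; instead the right statement is: iterating \eqref{e:2}, $\lambda^N v_{j_0} = \sum_{i\in\P} m_{j_0 i}(N) v_i \ge \sum_{i\in\P} v_i$, since $m_{j_0i}(N)\ge 1$ for all $i$. The left side is finite, so $\sum_{i\in\P} v_i \le \lambda^N v_{j_0}<\infty$, i.e. $v\in\ell^1(\P)$. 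The only point needing care is that $m_{j_0i}(N)\ge 1$ for \emph{all} $i\in\P$ simultaneously for one fixed $N$; this is exactly the leo property at level $N$ for the open interval $j_0^\circ$, giving $T^N(j_0)=[0,1]\supset i$.

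\textbf{Part (ii).} Fix $\eps\in(0,1/2)$ and let $\P_\eps=\{j\in\P: j\subset(\eps,1-\eps)\}$. The set $K=[\eps,1-\eps]$ is a compact subinterval, and I want to find one (or finitely many) elements of $\P$ that $T$-cover everything in $\P_\eps$. Since $T\in\CPM$ is topologically mixing, it is in particular not constant on any interval, and there is an $N=N(\eps)$ and a nonempty open interval $W$ such that $T^N(W)\supset K$; more carefully, by mixing applied to the open sets $(0,\eps)$ and, say, any small interval, together with the Markov structure one gets some $j_0\in\P$ and $N$ with $T^N(j_0)\supset[\eps,1-\eps]$ — alternatively invoke that $T$ is leo-like on a large enough scale, but mixing alone suffices to get $T^N(j_0)\supset K$ for suitable $j_0,N$. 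Then for every $j\in\P_\eps$ we have $j\subset K\subset T^N(j_0)$, so $m_{j_0 j}(N)\ge 1$. Iterating \eqref{e:2} as in part (i), $\lambda^N v_{j_0}=\sum_{i\in\P} m_{j_0 i}(N)v_i \ge \sum_{j\in\P_\eps} m_{j_0 j}(N) v_j \ge \sum_{j\in\P_\eps} v_j$, and since $\lambda^N v_{j_0}<\infty$ we conclude $\sum_{j\in\P,\, j\subset(\eps,1-\eps)} v_j<\infty$.

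\textbf{Main obstacle.} The technical heart in both parts is producing a \emph{single} index $j_0$ and a \emph{single} iterate $N$ whose image covers all the relevant elements at once — for (i) all of $\P$ via the leo hypothesis, for (ii) all elements inside $(\eps,1-\eps)$ via topological mixing. For (i) this is immediate from the definition of leo applied to any $j_0^\circ$. For (ii) one must be slightly careful: mixing gives, for each pair of open sets, an eventual covering, and one packages this into "$T^N(j_0)\supset[\eps,1-\eps]$ for some $j_0\in\P$ and some $N$" using that the endpoints $0,1$ have images bounded away from the interior is \emph{not} automatic, so instead I would take a small open interval $U$ around a point whose forward orbit is dense (mixing implies existence of such points), use mixing to get $T^N(U)\supset[\eps,1-\eps]$, shrink $U$ to lie inside the interior of a single partition element $j_0$, and use the Markov covering property (Definition~\ref{def:primary}, last bullet) to upgrade $T^N(U)\supset[\eps,1-\eps]$ to $T^N(j_0)\supset j$ for each $j\in\P_\eps$. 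Once that covering statement is in hand, the summability estimate is the same one-line iteration of \eqref{e:2} used in part (i).
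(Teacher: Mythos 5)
Your part (i) is, after the self-correction mid-paragraph, exactly the paper's proof: leo gives a single $N$ with $T^N(j_0)=[0,1]$, hence $m_{j_0 i}(N)\ge 1$ for all $i\in\P$ by Proposition~\ref{p:11}(ii), and iterating \eqref{e:2} yields $\sum_i v_i\le\lambda^N v_{j_0}<\infty$. No issues there.

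For part (ii) your skeleton is also the paper's (produce one $j_0$ and one $N$ with $T^N(j_0)\supset(\eps,1-\eps)$, then run the same one-line iteration), but the covering step as you justify it has a gap: you assert ``use mixing to get $T^N(U)\supset[\eps,1-\eps]$'' and earlier that ``mixing gives, for each pair of open sets, an eventual covering.'' Topological mixing gives eventual nonempty \emph{intersection} $T^m(U)\cap V\neq\emptyset$, not covering, so this step does not follow as stated, and the detour through dense orbits does not repair it. The missing ingredient is connectedness of the image: apply mixing twice to a fixed partition element $i$, with targets $[0,\eps/2)$ and $(1-\eps/2,1]$, to get $n_1,n_2$ such that $T^m(i)$ meets both sets for all $m\ge\max\{n_1,n_2\}$; since $T^m(i)$ is an interval, meeting both ends forces $T^m(i)\supset(\eps,1-\eps)$, hence $m_{ij}(m)\ge1$ for every $j\in\P$ with $j\subset(\eps,1-\eps)$. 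This is precisely how the paper closes the argument, and with that substitution your proof is complete; no dense-orbit point, shrinking of $U$, or appeal to the Markov covering property is needed.
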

\begin{proof}(i)~Since $T$ is leo, for a fixed element $i$ of $\P$,
there is an $n\in\bbn$ such that $T^n(i)=[0,1]$. Then by Proposition~\ref{p:11}(ii), $m_{ij}(n)\ge 1$ for each $j\in \P$.
This implies that any $\lambda$-solution $v=(v_j)_{j\in \P}$ of \eqref{e:2} satisfies
$$
\lambda^nv_i = \sum_{j\in \P}m_{ij}(n) v_j \ge \sum_{j\in \P} v_j,
$$
so $v\in\ell^1(\P)$.

\noindent (ii)~We assume that $T$ is topologically mixing - see Definition~\ref{def:CPMM}. For any fixed element $i \in \P$ there is an $n\in\bbn$ such that $T^n(i)\supset (\varepsilon,1-\varepsilon)$: since $T$ is topologically mixing, there exist positive integers $n_1$ and $n_2$ such that $T^{m_1}(i)\cap  [0,\varepsilon/2)\neq\emptyset$ for every $m_1\ge n_1$, resp. $T^{m_2}(i)\cap  (1-\varepsilon/2,1]\neq\emptyset$ for every $m_2\ge n_2$. This implies that the interval $T^n(i)$ contains $(\varepsilon,1-\varepsilon)$ whenever $n\ge\max\{n_1,n_2\}$ - fix one such $n$. Then $m_{ij}(n)\ge 1$ for any element $j$ of $\P$ such that $j\subset (\varepsilon,1-\varepsilon)$; hence

$$\lambda^nv_i = \sum_{j\in \P}m_{ij}(n) v_j \ge
\sum_{j\in \P,j\subset (\varepsilon,1-\varepsilon)}v_j.$$
for any $\lambda$-solution $v=(v_j)_{j\in \P}$ of \eqref{e:2}.
\end{proof}

The fundamental conclusion regarding linearizability of a map from $\CPM$ provided by the Vere-Jones theory follows.

\begin{Theorem}\label{t:7} If $T\in\CPM$ is leo and recurrent, then $T$ is linearizable.\end{Theorem}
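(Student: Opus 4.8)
The plan is to combine Theorem~\ref{t:5} with the Vere-Jones structure theory (Theorem~\ref{t:2}) and the leo property (Proposition~\ref{p:3}(i)). Fix a classical Markov partition $\P$ for $T$ with transition matrix $M$. By Proposition~\ref{p:7} we have $\lambda_M = e^{\htop(T)}$, so it suffices to produce a positive summable $\lambda_M$-solution of equation \eqref{e:2}: once we have this, criterion (ii) of Theorem~\ref{t:5} gives a conjugacy to a map $S\in\CPM_{\lambda_M}$, and since $\lambda_M = e^{\htop(T)}$ this conjugacy exhibits $T$ as linearizable in the sense of Definition~\ref{d:5}.

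First I would invoke recurrence of $M$. By Theorem~\ref{t:2}, since $M$ is irreducible and recurrent, there exists a (unique up to scaling) subinvariant $\lambda_M$-solution, and it is in fact a genuine $\lambda_M$-solution of \eqref{e:2}, proportional to the vector $(F_{ij}(R))_{i\in\P}$ with $R = \lambda_M^{-1}$ and $j$ fixed. Call this nonnegative nonzero solution $v = (v_i)_{i\in\P}$. Because $T$ is topologically mixing, the Remark after Definition~\ref{d:3} shows $v$ is automatically strictly positive. So at this stage we have a positive $\lambda_M$-solution; what remains is summability.

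Second, I would apply Proposition~\ref{p:3}(i): since $T$ is leo, \emph{every} $\lambda$-solution of \eqref{e:2} is summable, in particular $v \in \ell^1(\P)$. Thus $v$ is a positive summable $\lambda_M$-solution for the classical Markov partition $\P$, and Theorem~\ref{t:5}(ii)$\Rightarrow$(i) yields a continuous increasing onto conjugacy $\psi\colon [0,1]\to[0,1]$ from $T$ to some $S \in \CPM_{\lambda_M}$. Since $\lambda_M = e^{\htop(T)}$ by Proposition~\ref{p:7}, $T$ is linearizable.

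The substantive content is really all imported: the existence of a recurrent eigenvector is Theorem~\ref{t:2}, summability is Proposition~\ref{p:3}(i), and the passage to constant slope is Theorem~\ref{t:5}. The only point requiring a little care — and the place I would expect a referee to look — is checking that a map in $\CPM$ which is leo does in fact admit a \emph{classical} Markov partition to which Theorem~\ref{t:5}(ii) applies (this is built into the definition of $\CPM$ together with the discussion of classical versus slack partitions in Section~\ref{s:3}), and that the Perron value of $M$ is finite so that Theorem~\ref{t:2} and Remark~\ref{r:6} are applicable — but this follows from $\htop(T) < \infty$ via Proposition~\ref{p:7}. So the proof is short; its only mild obstacle is bookkeeping about which partition is being used, and nothing here requires new estimates.
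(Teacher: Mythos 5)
Your proposal is correct and follows essentially the same route as the paper's own proof: recurrence plus Theorem~\ref{t:2} yields a $\lambda_M$-solution, Proposition~\ref{p:3}(i) gives summability from the leo property, and Theorem~\ref{t:5} together with Proposition~\ref{p:7} converts this into linearizability. The extra bookkeeping you flag about classical versus slack partitions is harmless but not needed, since Theorem~\ref{t:5} already handles both cases.
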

\begin{proof} By assumption there exists a Markov partition $\P$ for $T$ such that the transition matrix $M=M(T)=(m_{ij})_{i,j\in\P}$ is recurrent. In such a case equation \eqref{e:2} has a $\lambda_M$-solution described in Theorem~\ref{t:2}. Since $T$ is leo the $\lambda_M$-solution is summable  by Proposition~\ref{p:3}(i) and the conclusion follows from Theorem~\ref{t:5}.\end{proof}

\begin{Remark}\label{r:7} In Section~\ref{s:7} we present various examples illustrating Theorem~\ref{t:7}. In particular, we show a strongly recurrent non-leo map of an operator type that is not conjugate to any map of constant slope.\end{Remark}

\subsection{Window perturbation}In this subsection we introduce and study two types of perturbations of a map $T$ from $\CPM$: local and global window perturbation.

\subsubsection{Local window perturbation}
\begin{Definition}\label{d:2}
For $S\in\CPM$ with Markov partition $\P$, let $j\in\P$ such that $S_{\vert j}$ is monotone. We say that $T\in\CPM$ is a {\it window perturbation of $S$ on $j$ (of order $k$, $k\in\bbn$)},  if
\begin{itemize}
\item $T$ equals $S$ on $[0,1]\setminus j^{\circ}$
\item there is a nontrivial partition $(j_i)_{i=1}^{2k+1}$ of $j$ such that $T(j_i)=S(j)$  and $T\vert_{j_i}$ is monotone for each $i$.\end{itemize}
\end{Definition}

Notice that due to Definition~\ref{d:2} a window perturbation does not change partition $\P$ (but renders it slack). Using a sufficiently fine Markov partition for $S$, its window perturbation $T$ can be arbitrarily close to $S$ with respect to the supremum norm.

  In the above definition an element of monotonicity of a partition is used. So, for example we can take $\P$ classical (\ie non-slack), or to a given partition $\P'$ and a given maximal interval of monotonicity $i$ of a map we can consider a partition $\P''$ finer than $\P'$ such that $i\in\P''$.

\begin{Proposition}\label{p:14}Let $T\in\CPM$ be a window perturbation of a map $S\in\CPM$. The following is true.
\begin{itemize}
\item[(i)] If $S$ is recurrent then $T$ is strongly recurrent and $R_T<R_S$.
\item[(ii)] If $S$ is transient then $T$ is strongly recurrent for each sufficiently large $k$.
\end{itemize}
\end{Proposition}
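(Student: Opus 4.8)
\textbf{Proof strategy for Proposition~\ref{p:14}.}
The plan is to reduce both statements to a count of closed paths in the transition graph and then apply Salama's criteria (Theorem~\ref{t:1}) via the Vere-Jones data in Table 1. Fix the Markov partition $\P$ for $S$; since a window perturbation leaves $\P$ unchanged (Definition~\ref{d:2}), both $S$ and $T$ have transition matrices indexed by the same $\P$, say $M^S=(m^S_{ij})$ and $M^T=(m^T_{ij})$. The only entries that differ are those in the row of the perturbed element $j$: because $T$ maps each of the $2k+1$ pieces $j_i$ monotonically onto $S(j)$, we get $m^T_{j\ell}=(2k+1)\,m^S_{j\ell}$ for every $\ell$ with $S(j)\supset\ell$, while $m^T_{i\ell}=m^S_{i\ell}$ for all $i\neq j$. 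In particular $M^T\ge M^S$ entrywise with strict inequality in row $j$, so $\lambda_{M^T}\ge\lambda_{M^S}$, i.e.\ $R_T\le R_S$; and since $S$ is irreducible, a strict increase in one row forces $\lambda_{M^T}>\lambda_{M^S}$, hence $R_T<R_S$. (Here I would either invoke Proposition~\ref{p:7} together with the obvious fact that the perturbation strictly increases topological entropy — the perturbed branch creates a larger horseshoe over $j$ — or argue directly with finite submatrices containing $j$.)

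\textbf{Part (i).} Assume $S$ is recurrent. I want to show $T$ is strongly recurrent, i.e.\ by Theorem~\ref{t:1}(ii) that every proper strongly connected subgraph $G_0\subsetneq G(M^T)$ has $h(G_0)<h(G(M^T))$. Take such a $G_0$, on vertex set $\P_0\subsetneq\P$. If $j\notin\P_0$, then $G_0$ is literally a subgraph of $G(M^S)$ (the edges not issuing from $j$ are identical), so $h(G_0)\le h(M^S)=\log\lambda_{M^S}<\log\lambda_{M^T}=h(M^T)$ and we are done. If $j\in\P_0$, then the restriction $M^T|_{\P_0}$ again has row $j$ equal to $(2k+1)$ times the row of $M^S|_{\P_0}$, while $\P_0\subsetneq\P$ means $M^S|_{\P_0}$ is a \emph{proper} submatrix of the recurrent (hence, it is worth noting, not necessarily strongly recurrent) matrix $M^S$. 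The key point is the comparison $\lambda_{M^S|_{\P_0}}\le\lambda_{M^S}$ together with the strict gap $\lambda_{M^T}>\lambda_{M^S}$ coming from the window: I would show $\lambda_{M^T|_{\P_0}}<\lambda_{M^T}$ by the same ``strict increase of one row of an irreducible matrix''/horseshoe argument applied on $\P_0$ versus $\P$, using that $G(M^T)$ is strongly connected so $\P_0$ is genuinely smaller. Concretely: $m^T_{jj}(n)\ge(2k+1)\,m^S_{jj}(n-1)\cdot(\text{something})$ is not quite clean, so instead I would compare closed loops at $j$: every closed path of length $n$ at $j$ in $G(M^T)$ that stays in $\P_0$ projects to a closed path in $G(M^S)|_{\P_0}$, and each time the path leaves $j$ along one of the $(2k+1)$ copies there is a factor $(2k+1)$; meanwhile in the full graph $G(M^T)$ one can also use the vertices of $\P\setminus\P_0$, strictly enlarging the count. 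Summing $\sum_n m^T_{jj}(n)R_T^n$ and using the third and fifth rows of Table 1 then yields $h(G_0)<h(M^T)$, hence strong recurrence.

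\textbf{Part (ii).} Assume $S$ is transient. By Theorem~\ref{t:1}(i) there is $G'\supsetneq G(M^S)$ with $h(G')=h(M^S)$, so $h(M^S)$ is ``not strictly maximal'' among enlargements. For the window perturbation of order $k$ we have $\lambda_{M^T}\ge(2k+1)^{1/p}\lambda_{M^S}$ along any closed loop of period $p$ through $j$; more usefully, $\lambda_{M^T}\to\infty$ as $k\to\infty$ (again by Proposition~\ref{p:7}, since the horseshoe over $j$ has at least $2k+1$ branches, $\htop(T)\ge\frac1{?}\log(2k+1)\to\infty$ — or directly, $m^T_{jj}(p)\ge(2k+1)\,m^S_{jj}(p)$ for the shortest return time $p$ to $j$, giving $\lambda_{M^T}^p\ge(2k+1)\lambda_{M^S}^p/\,c$). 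Now I apply Theorem~\ref{t:1}(ii): I must check that for $k$ large, every proper strongly connected $G_0\subsetneq G(M^T)$ has $h(G_0)<h(M^T)$. Split as in part (i). If $j\notin\P_0$ then $h(G_0)\le h(M^S)<h(M^T)$ once $\lambda_{M^T}>\lambda_{M^S}$, which already holds for $k\ge1$. If $j\in\P_0$, the same loop-counting bound gives $\lambda_{M^T|_{\P_0}}\le C\,(2k+1)\,\lambda_{M^S|_{\P_0}}^{?}$, and here the main obstacle appears: a priori $\lambda_{M^S|_{\P_0}}$ could be as large as $\lambda_{M^S}$ (since $S$ is only assumed transient, not strongly recurrent), so the crude bound does not separate $\lambda_{M^T|_{\P_0}}$ from $\lambda_{M^T}$. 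The resolution is to note that deleting even one vertex from a strongly connected graph strictly decreases the Perron value only under recurrence — which we do not have for $S$. Instead I would argue as follows: among all proper subgraphs containing $j$, the supremum of $h(G_0)$ is attained on $G(M^S)\setminus\{\text{one vertex}\}$-type graphs and equals $h$ of a matrix of the form $M^S|_{\P_0}$ with row $j$ multiplied by $(2k+1)$; its Perron value $\lambda(k,\P_0)$ satisfies a fixed-point/renewal equation $F^{(k)}_{jj}(R)=1$ where $F^{(k)}_{jj}(z)=(2k+1)F^S_{jj}(z)$ because every first-return loop to $j$ acquires exactly one factor $(2k+1)$; hence $\lambda(k,\P_0)$ is the reciprocal of the solution of $F^S_{jj}(z)=1/(2k+1)$ restricted to $\P_0$, which tends to $\lambda_{M^S}$-independent limits — whereas for the full graph the analogous equation is $F^{T}_{jj}(z)=(2k+1)F^S_{jj}(z)=1$, and since $F^S_{jj}$ is strictly increasing in $z$ with $F^S_{jj}(R_S)\le1$ (transient case: $<1$), enlarging the multiplier to $2k+1$ pushes the root strictly below $R_S$ and, crucially, strictly below the corresponding root for any $\P_0$ that omits vertices reachable in first-return loops. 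Making this renewal-equation comparison precise — showing the gap between ``full'' and ``proper'' roots is bounded below once $k$ is large — is the technical heart of (ii); I would carry it out using identity (iv) of Theorem~\ref{t:12}, $M_{jj}(z)=(1-F_{jj}(z))^{-1}$, together with the monotonicity of all the generating functions in $z$.

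\textbf{Expected main obstacle.} The delicate point, in both parts but acutely in (ii), is proving the \emph{strict} inequality $h(G_0)<h(G(M^T))$ uniformly over all proper strongly connected subgraphs $G_0$ that contain the perturbed vertex $j$. The entrywise comparison $M^T\ge M^S$ only gives non-strict inequalities between Perron values of corresponding submatrices; extracting strictness requires using that the $(2k+1)$-fold multiplication sits on a vertex $j$ through which all sufficiently long loops in $G(M^T)$ must pass often (by irreducibility, with controlled return time), so that removing any other vertex genuinely costs entropy relative to the amplified full graph. Quantifying this ``$j$ is a bottleneck that the amplification exploits'' via the renewal equation $F^T_{jj}(R_T)=1$ and its $\P_0$-analogue is what the proof will hinge on.
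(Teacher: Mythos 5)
You isolate the right combinatorial ingredient --- every first-return loop at the perturbed site $j$ acquires exactly one factor $2k+1$, so $f^T_{jj}(n)=(2k+1)f^S_{jj}(n)$, $F^T_{jj}(z)=(2k+1)F^S_{jj}(z)$ and $\Phi^T_{jj}=\Phi^S_{jj}$ --- but you then route everything through Salama's subgraph criterion (Theorem~\ref{t:1}(ii)), and that is where the argument has a genuine gap: in both parts you must prove $h(G_0)<h(G(M^T))$ for \emph{every} proper strongly connected $G_0$ containing $j$, and you explicitly leave this step open, correctly noting that the entrywise comparison $M^T\ge M^S$ yields only non-strict inequalities between Perron values of corresponding submatrices. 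A second, sharper problem sits in your setup paragraph: the claim that ``a strict increase in one row of an irreducible matrix forces $\lambda_{M^T}>\lambda_{M^S}$'' is false for infinite matrices --- Theorem~\ref{t:1}(i) says precisely that a transient graph can be properly enlarged without changing the entropy, and a window perturbation is such an enlargement. That is exactly why part (ii) only asserts the conclusion for sufficiently large $k$; your ``obvious'' entropy increase is not available when $S$ is transient and $k$ is small. (For recurrent $S$ the strict increase does hold, but the clean justification is Theorem~\ref{t:1}(i) or the renewal identity, not irreducibility.)

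The paper never looks at subgraphs. It reads strong recurrence off the last row of Table 1: $M$ is strongly recurrent if and only if $R<\Phi_{jj}$. In case (i), recurrence of $S$ gives $F^S_{jj}(R_S)=1$, hence $F^T_{jj}(R_S)=2k+1>1\ge F^T_{jj}(R_T)$ (the latter because $F_{jj}(R)\le 1$ always), so $R_T<R_S\le\Phi_S=\Phi_T$ and $T$ is strongly recurrent for every $k\ge 1$. In case (ii), transience gives $s:=F^S_{jj}(R_S)<1$ and $R_S=\Phi_S$; once $(2k+1)s>1$ the same monotonicity argument forces $R_T<R_S=\Phi_T$, and Table 1 applies again. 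If you want to rescue your route, the identity $F^T_{jj}=(2k+1)F^S_{jj}$ evaluated at $R_T<\Phi_{jj}$ is indeed what separates the full graph from any proper subgraph through $j$ --- but it is much shorter to conclude strong recurrence directly from $R_T<\Phi_{jj}$, which is what the paper does.
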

\begin{proof}Fix a partition $\P$ for $S$, let $T$ be a window perturbation of $S$ on $j\in\P$. Applying Proposition~\ref{p:10} it is sufficient to specify the Vere-Jones class of $T$ with respect to $\P$. Consider generating functions $F^S(z)=F^S_{jj}(z)=\sum_{n\ge 1}f^S(n)z^n,\text{ resp. }F^{T}(z)=F^{T}_{jj}(z)=\sum_{n\ge 1}f^{T}(n)z^n$, corresponding to $S$, resp. $T$ and with radius of the convergence $\Phi_S=\Phi^S_{jj}$, resp. $\Phi_T=\Phi^T_{jj}$. Notice that

\begin{equation}\label{e:30}\forall~n\in\bbn\colon~f^T(n)=(2k+1)f^S(n),
\end{equation}
hence $\Phi_S=\Phi_T$.\vskip1mm
(i)~If $S$ is recurrent then by Table 1 and \eqref{e:30},

$$\sum_{n\ge 1}f^S(n)R_S^n=1, \qquad \sum_{n\ge 1}f^T(n)R_S^n=2k+1.$$
Then, since $R_S\le \Phi_S=\Phi_T$,

$$\sum_{n\ge 1}f^T(n)R_{T}^n\le 1<2k+1\le\sum_{n\ge 1}f^T(n)(\Phi_T)^n,$$
hence $R_{T}<\Phi_T$ and $T$ is strongly recurrent.

(ii)~If $S$ is transient then by Table 1 and \eqref{e:30},
$$s=\sum_{n\ge 1}f^S(n)R_S^n<1,~\sum_{n\ge 1}f^T(n)R_S^n=(2k+1)s.$$
If for a sufficiently large $k$, $(2k+1)s>1$, necessarily $R_{T}<R_S=\Phi_S=\Phi_T$ and $T$ is strongly recurrent by Table 1.
\end{proof}

Let $M$ be a matrix indexed by the elements of some $\P$ and representing a bounded linear operator $\M$ on the Banach space $\ell^1=\ell^1(\P)$ - see Section~\ref{s:2}. It is well known \cite[p. 264]{aet.80}, \cite[Theorem 3.3]{aet.80} that for $\lambda>r_{\M}$ the formula

\begin{equation}\label{e:40}\left (\frac{1}{\lambda}M_{ij}\left (\frac{1}{\lambda}\right )=\sum_{n\ge 0}m_{ij}(n)/\lambda^{n+1}\right )_{i,j\in\P}
\end{equation}
defines the resolvent operator $R_{\lambda}(\M)\colon~\ell^1(\P)\to\ell^1(\P)$ to  the operator $$\M_{\lambda}=\lambda I-\M.$$

We will repeatedly use this fact when proving our main results. The following theorem implies that in the space of maps from $\CPM$ of operator type an arbitrarily small (with respect to the supremum norm) local change of a map will result to a linearizable map.

\begin{Theorem}\label{t:6}Let $T\in\CPM$ be a window perturbation of order $k$ of a map $S\in\CPM$ of operator type. Then $T$ is linearizable for every sufficiently large $k$.
\end{Theorem}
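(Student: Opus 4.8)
The plan is to use Theorem~\ref{t:5} and reduce linearizability to producing a \emph{summable} $\lambda$-solution of equation~\eqref{e:2} for $\lambda = \lambda_{M(T)} = e^{\htop(T)}$ (the equality being Proposition~\ref{p:7}). Fix a Markov partition $\P$ for $S$ on which $S$ is Markov and $S_{\vert j}$ is monotone, and let $T$ be the window perturbation of order $k$ on $j$; by Definition~\ref{d:2} this same $\P$ serves as a (slack) Markov partition for $T$, so by Theorem~\ref{t:5}(v) it suffices to find a positive summable $\lambda$-solution $v=(v_i)_{i\in\P}$ for the transition matrix $M = M(T)$. By Proposition~\ref{p:14}, $T$ is strongly recurrent for every sufficiently large $k$ when $S$ is transient, and strongly recurrent for every $k$ when $S$ is recurrent; in all these cases $M$ is recurrent, so by Theorem~\ref{t:2} (together with Proposition~\ref{p:10} guaranteeing the Vere-Jones class is partition-independent) equation~\eqref{e:2} has a (unique up to scaling) $\lambda_M$-solution $v$, proportional to $(F^T_{ij}(R_T))_{i\in\P}$ for fixed $j$, where $R_T = \lambda_M^{-1}$. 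The only thing left to check is that this $v$ is \emph{summable}.

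For summability I would exploit the operator-type hypothesis on $S$, which by Remark~\ref{r:3} means $S$ — and hence also $T$, since the window perturbation only increases preimage counts by a bounded factor $2k+1$ and changes only finitely many preimages near $j$ — satisfies~\eqref{e:52}: $\#T^{-1}(y)\le K'$ for some $K'$ and all $y$. Thus $M = M(T)$ has uniformly bounded column sums, i.e. it represents a bounded operator $\M$ on $\ell^1(\P)$ with $\|\M\| = \sup_j\sum_i m_{ij} < \infty$ and spectral radius $r_{\M} \ge \lambda_M$ (with equality expected, but I only need $\ge$ in one direction and the finiteness of $\|\M\|$). The key point of strong recurrence is that $R_T < R_S = \Phi_S = \Phi_T$, i.e. $F^T_{jj}(R_T) < 1$ strictly; this strict gap is what will let me control the tail of $v$. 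Concretely, I would use the resolvent formula~\eqref{e:40}: for $\lambda$ slightly larger than $\lambda_M$ (still $>r_{\M}$ is automatic once we know $r_{\M}=\lambda_M$, but to be safe I can argue at $\lambda = \lambda_M$ by a limiting/monotonicity argument), the vector $\bigl(\frac1\lambda M_{ij}(\frac1\lambda)\bigr)_{i}$ lies in $\ell^1(\P)$ as a column of the resolvent applied to a basis vector. Then I would compare the $\lambda_M$-solution $v$ to such a resolvent column: since $M$ is recurrent, $v_i = c\,F^T_{ij}(R_T) = c\,\lim_{\lambda\downarrow\lambda_M} \lambda R_T\, (M_{ij}(1/\lambda)/M_{jj}(1/\lambda))$ type identities (from the first-entrance decomposition $M_{ij}(z) = F_{ij}(z)M_{jj}(z)$ for $i\ne j$, and $M_{jj}(z) = (1-F_{jj}(z))^{-1}$), and strong recurrence gives $M_{jj}(R_T) = (1-F^T_{jj}(R_T))^{-1} < \infty$, so $F^T_{ij}(R_T) = M^T_{ij}(R_T)/M^T_{jj}(R_T) \le M^T_{ij}(R_T)$. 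Summing over $i$, $\sum_i v_i \le c\, M^T_{jj}(R_T) \sum_i M^T_{ij}(R_T)/M^T_{jj}(R_T) = c\sum_i M^T_{ij}(R_T)$, and this last sum is a column sum of the resolvent-type matrix at $z = R_T \le 1/r_{\M}$...

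The cleanest route, and the one I would actually write up, bundles the last two steps: since $T$ is of operator type, $\M$ acts boundedly on $\ell^1(\P)$; since $T$ is strongly recurrent, its Perron value $\lambda_M$ satisfies $\lambda_M < \Phi^T_{jj}{}^{-1}$... no — rather, one uses that for a strongly recurrent matrix of operator type, $R_T = \lambda_M^{-1}$ is \emph{strictly inside} the region where $\sum_i M^T_{ij}(z) = \sum_i \sum_n m^T_{ij}(n) z^n$ converges. Indeed $\sum_i m^T_{ij}(n) \le \|\M\|^? $ — here is the actual mechanism: $\sum_{i}m^T_{ij}(n)$ is the column sum of $M^n$, which equals $\|\M^n(e_j)\|_1 \le \|\M\|^n$; hence $\sum_i M^T_{ij}(z) \le \sum_n \|\M\|^n z^n < \infty$ for $z < \|\M\|^{-1}$. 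Now $\lambda_M = r_{\M} = \lim_n\|\M^n\|^{1/n}$, and for operator-type \emph{strongly} recurrent matrices one has the strict inequality $\lambda_M < \|\M\|$ unless the matrix is essentially finite — hmm, that is false in general (e.g. $\|\M\|=\lambda_M$ is possible). So the strictness must come from strong recurrence directly: I would invoke the known characterization that for strongly recurrent $M$ of operator type, the resolvent $R_{\lambda_M}(\M)$ extends continuously to $\lambda = \lambda_M$ as a bounded operator on $\ell^1$ (this is essentially Ruette's refinement / the meaning of $F_{jj}(R)<1$ plus $F'_{jj}(R)<\infty$), so $\bigl(M^T_{ij}(R_T)\bigr)_{i\in\P} \in \ell^1(\P)$ for each fixed $j$; then the $\lambda_M$-solution $v \propto (F^T_{ij}(R_T))_i$, being dominated by $(M^T_{ij}(R_T))_i$ up to the finite constant $M^T_{jj}(R_T)^{-1}$, is summable, and Theorem~\ref{t:5} finishes the proof. \textbf{The main obstacle} is exactly this point: establishing that for an operator-type strongly recurrent matrix the first-entrance generating-function vector $(F_{ij}(R))_{i}$ is summable — equivalently that the resolvent is bounded up to the spectral radius. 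I expect the authors handle this via the explicit resolvent expression~\eqref{e:40}, the bound $\|\M^n e_j\|_1 \le \|\M\|^n$ giving convergence of $\sum_i M_{ij}(z)$ for $z<\|\M\|^{-1}$, combined with an argument — using $F_{jj}(R)<1$ strictly and the subinvariance/uniqueness from Theorem~\ref{t:2} — that pushes this convergence up to $z = R$; this is where Proposition~\ref{p:14}'s strict inequality $R_T < R_S = \Phi_T$ is indispensable, since it prevents the tail $\sum_{i} m_{ij}(n) R_T^n$ from failing to decay.
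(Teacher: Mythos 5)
Your reduction to a summable $\lambda$-solution via Theorem~\ref{t:5}, the use of Proposition~\ref{p:14} to force recurrence for large $k$, and the appeal to Theorem~\ref{t:2} for the solution $v\propto (F^T_{ij}(R_T))_{i}$ all match the paper. But the summability argument — which you yourself flag as the main obstacle — contains a genuine error and does not close. Strong recurrence does \emph{not} give $F^T_{jj}(R_T)<1$: by the first row of Table~1, every recurrent matrix has $F_{jj}(R)=1$ exactly (strong recurrence is the statement $R<\Phi_{jj}$, i.e.\ the first-return series converges \emph{past} $R$, not that its value at $R$ is below $1$). Consequently $M_{jj}(R_T)=(1-F_{jj}(R_T))^{-1}=\infty$, consistent with the third row of Table~1, which records $M_{ij}(R)=\infty$ for \emph{all} recurrent classes. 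So the domination $F^T_{ij}(R_T)\le M^T_{ij}(R_T)$ is vacuous (every term on the right is infinite), and the ``known characterization'' you invoke — that for an operator-type strongly recurrent matrix the resolvent extends boundedly to $\lambda=\lambda_M$ — is false: each entry of the would-be resolvent column already diverges. (Separately, $r_{\M}\ge\lambda_M$ with possibly strict inequality, so $R_T$ need not even reach the boundary of the disc where the operator resolvent converges; but the entrywise divergence is the fatal point.)

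The paper's mechanism, which your proposal misses, is to evaluate the generating functions of a \emph{fixed} comparison matrix at a spectral parameter that moves with $k$. Since a window perturbation on $j$ does not alter first-entrance paths into $j$ starting from $i\ne j$, one has $f^{T(k)}_{ij}(n)=f^{S}_{ij}(n)=f^{T(k_0)}_{ij}(n)$ for all $i\neq j$ and all orders $k$ (equation \eqref{e:32}); meanwhile $(2k+1)\sum_n f^S_{jj}(n)R_{T(k)}^n=1$ forces $R_{T(k)}\downarrow 0$, i.e.\ $\lambda_{T(k)}\to\infty$. Fix $k_0$ so that $T(k_0)$ is recurrent, let $r_{k_0}$ be the spectral radius of the bounded operator $\M$ on $\ell^1(\P)$ represented by $M^{T(k_0)}$, and take $k>k_0$ with $\lambda_{T(k)}>r_{k_0}$. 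Then $R_{T(k)}$ lies strictly inside the disc of convergence of the resolvent \eqref{e:40} of the \emph{fixed} operator $\M$, so $\sum_i M^{T(k_0)}_{ij}(R_{T(k)})<\infty$, and
$\sum_i F^{T(k)}_{ij}(R_{T(k)})=1+\sum_{i\ne j}F^{T(k_0)}_{ij}(R_{T(k)})\le\sum_i M^{T(k_0)}_{ij}(R_{T(k)})<\infty$.
Thus ``sufficiently large $k$'' is used twice: once to gain recurrence, and once more to push the Perron value of the perturbed map above the operator spectral radius of the comparison matrix. Without this second step your argument cannot produce summability.
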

\begin{proof}We will use the same notation as in the proof of Proposition~\ref{p:14}.

Let us denote $M^{T(k)}=(m^{T(k)}_{ij})_{i,j\in\P}$ the transition matrix of a considered window perturbation $T(k)$ of $S$, let $\lambda_{T(k)}$ be the value ensured for $M^{T(k)}$ by Proposition~\ref{p:2}, put $R_{T(k)}=1/\lambda_{T(k)}$. Since $S$ is of operator type, it is also the case for each $T(k)$. Using Proposition~\ref{p:14} and Theorem~\ref{t:2} we obtain that for some $k_0$ the perturbation $T(k_0)$ is recurrent and equation \eqref{e:2} is $\lambda_{T(k_0)}$-solvable:

\begin{equation*}\label{e:34}\forall~i\in \P\colon~\sum_{\ell\in \P}m^{T(k_0)}_{i\ell}~F^{T(k_0)}_{\ell j}(R_{T(k_0)})=\lambda_{T(k_0)}F^{T(k_0)}_{ij}(R_{T(k_0)}),\end{equation*}
 where $F^{T(k_0)}_{\ell j}(z)=\sum_{n\ge 1}f^{T(k_0)}_{\ell j}(n)z^n$, $\ell\in\P$.
Since by \eqref{e:30} for each $k$,
$$(2k+1)\sum_{n\ge 1}f^S_{jj}(n)R_{T(k)}^n=\sum_{n\ge 1}f^{T(k)}_{jj}(n)R_{T(k)}^n=1,$$
we can deduce that $(R_{T(k)})_{k\ge 1}$ is decreasing and

\begin{equation}\label{e:33}
\lim_kR_{T(k)}=0.
\end{equation}
By our definition of a window perturbation,  for each $i\in\P\setminus\{j\}$,

\begin{equation}\label{e:32}
\forall~\text{ order }k~\forall~n\in\bbn\colon~f^{T(k)}_{ij}(n)=f^S_{ij}(n).
\end{equation}

Denote $r_{k_0}$ the spectral radius of the operator $\M\colon~\ell^1\to\ell^1$ represented by the matrix $M=M^{T(k_0)}$. Using \eqref{e:33} we can consider a $k>k_0$ for which $\lambda_{T(k)}>r_{k_0}$. Then, since the resolvent operator $(\lambda-\M)^{-1}$ represented by the matrix \eqref{e:40} is defined well for each real $\lambda>r_{k_0}$ as a bounded operator on $\ell^1$ \cite[p. 264]{aet.80}, we obtain from \eqref{e:32}, Remark~\ref{r:1} and \eqref{e:36}

\begin{equation*}\label{e:35}
\sum_{i\in\P}F^{T(k)}_{ij}(R_{T(k)})=1+\sum_{i\in\P,~i\neq j}F^{T(k_0)}_{ij}(R_{T(k)})\le \sum_{i\in\P}M_{ij}(R_{T(k)})<\infty;
\end{equation*}
now since $T(k)$ is recurrent, Theorem~\ref{t:2} and Theorem~\ref{t:5} can be applied.
\end{proof}

Let $(T,\P,M)\in\CPM^*$. For any pair $i,j\in\P$ we define the number
$$
n(i,j)=\min\{n\in\bbn\colon~m_{ij}(n)\neq 0\}.
$$
In the corresponding strongly connected directed graph $G=G(M)$, $n(i,j)$ is the length of the shortest path from $i$ to $j$. In particular, such a path
contains neither $i$ nor $j$ inside, so at the same time
$$
\ell_{ij}(n(i,j))\neq 0,~ f_{ij}(n(i,j))\neq 0
$$
and $\ell_{ij}(n)=f_{ij}(n)=0$ for every $n<n(i,j)$. Since
$$
\frac{n(j',i)-n(j',j)}{n(i,j')+n(j',j)}\le \frac{n(j,i)}{n(i,j)}
$$
for every pair $j,j'\in\P$, the suprema
\begin{equation}\label{e:42}
S(j,\P)\colon=\sup_{i\in\P}\frac{n(j,i)}{n(i,j)},~j\in\P
\end{equation}
are either all finite or all infinite. Moreover, we have the following.

\begin{Proposition}\label{p:13}Let $T\in\CPM$ with two Markov partitions $\P$, resp. $\Q$. Then $S(k,\P)$ is finite for some $k\in\P$ if and only if $S(k',\Q)$ is finite for some $k'\in\Q$.
\end{Proposition}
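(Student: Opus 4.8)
The plan is to reduce the statement to a comparison between shortest-path lengths in the graphs $G(M^\P)$ and $G(M^\Q)$, and to exploit the fact (already used repeatedly in the proof of Proposition~\ref{p:10}) that any two Markov partitions can be compared through a common refinement. As in Proposition~\ref{p:10}, by introducing the partition $\R$ whose elements are the closures of the connected components of $[0,1]\setminus (P\cap Q)$, it suffices to treat the case where $\Q$ is finer than $\P$, i.e.\ $P\subset Q$ and every element of $\Q$ is contained in an element of $\P$; the general case then follows by applying this case twice (to the pairs $\P,\R$ and $\R,\Q$). So from now on assume $P\subset Q$.

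First I would record the elementary relations between the quantities $n(\cdot,\cdot)$ for the two partitions. Fix $k\in\P$ and choose $k'\in\Q$ with $k'\subset k$. On the one hand, since $\Q$ is finer than $\P$, a path in $G(M^\Q)$ projects (by sending each $j'\in\Q$ to the unique $j\in\P$ containing it) to a walk in $G(M^\P)$ of the same length that may, however, stall at a vertex when it moves between two subintervals of a single element of $\P$. Nevertheless the Markov property~\eqref{e:24} guarantees that if $T(i')\supset j$ then $T(i')$ covers every subinterval of $j$ as well, so conversely every $\P$-edge from $i$ to $j$ lifts to a $\Q$-edge from any chosen $i'\subset i$ to some $j'\subset j$; chaining these lifts shows $n_\Q(i',j')\le n_\P(i,j)$ for suitable representatives. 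On the other hand, because $T$ is topologically mixing and Markov, for any $i\in\P$ and any $i'\subset i$ there is an $\ell=\ell(i,i')\in\bbn$ with $T^\ell(i')\supset i$, hence $n_\Q(i',i'')\le \ell(i,i') + n_\P(i,j) + n_\Q(j',i'')$-type inequalities let one pass between $\Q$-distances and $\P$-distances at the cost of bounded additive constants depending only on finitely many chosen connecting times.

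The key step is then to combine these two-sided estimates into a bound of the form
\begin{equation*}
c_1\,\frac{n_\P(k,i)}{n_\P(i,k)} - c_2 \le \frac{n_\Q(k',i')}{n_\Q(i',k')} \le c_3\,\frac{n_\P(k,i)}{n_\P(i,k)} + c_4
\end{equation*}
uniformly over $i\in\P$ (with $i'$ a chosen representative of $i$ in $\Q$) and, symmetrically, for $\Q$-vertices not of the form $i'$ one uses the already-established inequality $\tfrac{n(j,i)}{n(i,j)}$-monotonicity from~\eqref{e:42} (the displayed inequality just before~\eqref{e:42}) which shows that the finiteness of $S(k',\Q)$ does not depend on which base vertex $k'$ or which vertices $i'$ one tests. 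From such a two-sided comparison it is immediate that $\sup_{i\in\P} n_\P(k,i)/n_\P(i,k) = S(k,\P)$ is finite if and only if the corresponding supremum over $\Q$ is finite, which is what is claimed.

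The main obstacle I anticipate is bookkeeping the additive constants: the quantities $\ell(i,i')$ could a priori be unbounded as $i$ ranges over the infinite partition $\P$, which would wreck a naive additive estimate. The way around this is to only ever insert such a detour at the single fixed base vertex $k$ (resp.\ $k'$): one never needs to go from an arbitrary $i'$ back to $i$, only from $k'$ to $k$ and back, so a single constant $\ell(k,k')$ suffices; all other passages between the two graphs are handled by the length-preserving projection and the multiplicity-preserving lifting, which introduce no additive error at all. Once this is arranged the argument is purely combinatorial and uses only Proposition~\ref{p:12}, the Markov relation~\eqref{e:24}, topological mixing, and the refinement trick from Proposition~\ref{p:10}.
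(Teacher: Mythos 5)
Your proposal takes essentially the same route as the paper's proof: reduce to the nested case via a common refinement, bound $n_{\Q}(i',k')$ from below by projecting $\Q$-paths to $\P$-walks of the same length, bound $n_{\Q}(k',i')$ from above by inserting a single mixing time $m$ with $T^m(k')\supset k$ at the base vertex only, and, for the converse direction, lift the shortest $\P$-path from $i$ to $k$ into $\Q$ starting from a \emph{suitable} sub-element $i''\subset i$ --- exactly the content of the paper's inequalities \eqref{e:53} and \eqref{e:54}. The one slip is your claim that a $\P$-edge lifts ``from any chosen $i'\subset i$'' (it lifts only from a suitably chosen representative, as you in fact acknowledge a line later), but this does not affect the argument, and your key observation that the additive detour constant is needed only at the fixed base vertex is precisely how the paper avoids unbounded correction terms.
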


\begin{proof} Let $P=[0,1]\setminus\bigcup_{j\in\P}j^{\circ}$ and $Q=[0,1]\setminus\bigcup_{j\in\Q}j^{\circ}$. First, let us assume that $P\subset Q$. Fix two elements $j\in\P$, $j'\in\Q$ such that $j'\subset j$. Since the map $T$ is topologically mixing, there exists a positive integer $m$ for which $T^mj'\supset j$. For an $i\in\P$ and an $i'\in\Q$ satisfying $i'\subset i$ we obtain $n(i',j')\ge n(i,j)$ and $n(j',i')\le n(j,i)+m$; hence

\begin{equation}\label{e:53}
(\forall~i\in\P)(\forall~i'\in\Q,~i'\subset i)\colon~\frac{n(j',i')}{n(i',j')}\le \frac{n(j,i)+m}{n(i,j)}.
\end{equation}
Inequality \eqref{e:53} together with property \eqref{e:42} show that if $S(k,\P)$ is finite for some $k\in\P$ then $S(k',\Q)$ is finite for some $k'\in\Q$.

On the other hand, there has to exist an $i''\in\Q$, $i''\subset i$ such that $T^{n(i,j)}i''\supset j'$, \ie $n(i,j)\ge n(i'',j')$. Since also $n(j,i)\le n(j',i'')$, we can write for $i''\in\Q$

\begin{equation}\label{e:54}
(\forall~i\in\P)(\exists~i''\in\Q,~i''\subset i)\colon~ \frac{n(j,i)+m}{n(i,j)}\le  \frac{n(j',i'')+m}{n(i'',j')}.
\end{equation}
inequality \eqref{e:54} together with property \eqref{e:42} show that if $S(k',\Q)$ is finite for some $k'\in\Q$ then $S(k,\P)$ is finite for some $k\in\P$.

If $P\nsubseteq Q$ and $Q\nsubseteq P$, we can consider the partition for $T$ $$\R=\P\vee\Q=\{i\cap i'\colon~i\in\P,~i'\in\Q\}.$$ Clearly, $R=[0,1]\setminus\bigcup_{j\in\R}j^{\circ}=P\cup Q$ and we can use the above arguments for the pairs $\R,\P$ and $\R,\Q$ hence the conclusion for the pair $\P,\Q$ follows.
\end{proof}

So, in \eqref{e:42}, for fixed $(T,\P,M)\in\CPM^*$ and $j\in\P$, we compare the shortest path from $j$ to $i$ (numerator) to the shortest path from $i$ to $j$ (denominator) and take the supremum with respect to $i$. For example, for our map from Subsection~\ref{ss:2} the values \eqref{e:42} are equal to $1$, when $T$ is leo, \eqref{e:42} is finite for every $j\in\P$. Theorem~\ref{t:6} explains the role of a window perturbation in case of maps of operator type. In Theorem~\ref{t:9} we obtain an analogous statement for maps of non-operator type under the assumption that the quantities in \eqref{e:42} are finite.

\begin{Theorem}\label{t:9}
Let $S\in\CPM$ with a Markov partition $\Q$ and such that the supremum
in \eqref{e:42} is finite for some $j'\in\Q$. Let $T\in\CPM$ be a window perturbation of order $k$ of $S$. Then $T$ is linearizable for every sufficiently large $k$.
\end{Theorem}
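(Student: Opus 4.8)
\textbf{Proof proposal for Theorem~\ref{t:9}.}

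The plan is to mimic the proof of Theorem~\ref{t:6}, replacing the use of the $\ell^1$-operator (which is unavailable in the non-operator case) by a direct summability estimate that exploits the finiteness of $S(j',\Q)$ in \eqref{e:42}. By Proposition~\ref{p:13} and Proposition~\ref{p:10} we may work with the single partition $\Q$ on which $S(j',\Q)<\infty$; a window perturbation on some $j\in\Q$ with $S_{\vert j}$ monotone does not change $\Q$ (it only renders it slack), and by Proposition~\ref{p:10} the Vere-Jones class of $T(k)$ is well-defined. As in the proof of Theorem~\ref{t:6}, fix the window element $j$, write $M^{T(k)}$ for the transition matrix of the order-$k$ perturbation, $\lambda_{T(k)}$ for its Perron value, $R_{T(k)}=1/\lambda_{T(k)}$, and note the scaling relation \eqref{e:30}: $f^{T(k)}_{jj}(n)=(2k+1)f^S_{jj}(n)$, while $f^{T(k)}_{ij}(n)=f^S_{ij}(n)$ for $i\neq j$ (formula \eqref{e:32}). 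The relation $(2k+1)\sum_n f^S_{jj}(n)R_{T(k)}^n=1$ forces $(R_{T(k)})_k$ to decrease to $0$, and by Proposition~\ref{p:14} (parts (i) and (ii) together) for all sufficiently large $k$ the map $T(k)$ is recurrent, so by Theorem~\ref{t:2} equation \eqref{e:2} has a $\lambda_{T(k)}$-solution proportional to $\bigl(F^{T(k)}_{ij}(R_{T(k)})\bigr)_{i\in\Q}$ with $j$ fixed. By Theorem~\ref{t:5} it then suffices to show this solution is summable, i.e.\ $\sum_{i\in\Q}F^{T(k)}_{ij}(R_{T(k)})<\infty$ for $k$ large.

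The key new ingredient is that for $i\neq j$ the first-entrance coefficient $f^S_{ij}(n)$ vanishes for $n<n(i,j)$, where $n(i,j)$ is the shortest-path length from $i$ to $j$ in $G(M^S)$. Hence for $i\neq j$,
\begin{equation*}
F^{T(k)}_{ij}(R_{T(k)})=\sum_{n\ge n(i,j)}f^S_{ij}(n)R_{T(k)}^n
\le R_{T(k)}^{\,n(i,j)-n(j,i)}\sum_{n\ge 1}f^S_{ij}(n-n(j,i))R_{T(k)}^{\,n},
\end{equation*}
and using that a path $i\to j$ of length $m$ followed by a fixed shortest path $j\to i$ of length $n(j,i)$ gives a path $i\to i$, one gets $\sum_n f^S_{ij}(n)R_{T(k)}^n\le R_{T(k)}^{-n(j,i)}F^{S}_{ii}(R_{T(k)})$, and since $F^S_{ii}(R_{T(k)})\le 1$ when $R_{T(k)}\le\Phi^S_{ii}$, we obtain a bound of the shape
\begin{equation*}
F^{T(k)}_{ij}(R_{T(k)})\le C\,R_{T(k)}^{\,n(i,j)-n(j,i)}\le C\,R_{T(k)}^{\,n(i,j)\,(1-S(j,\Q)^{-1}\cdot\text{const})},
\end{equation*}
where finiteness of $S(j,\Q)=\sup_i n(j,i)/n(i,j)$ guarantees $n(i,j)-n(j,i)\ge c\,n(i,j)$ for a fixed $c>0$ independent of $i$. (A small point: the window element $j$ used in the perturbation and the index $j'$ for which \eqref{e:42} is finite may differ, but by the paragraph after \eqref{e:42} the suprema $S(j,\Q)$ are all finite simultaneously, so this is harmless.) Because $T\in\CPM$ is topologically mixing, for each fixed length $\ell$ there are only finitely many $i\in\Q$ with $n(i,j)=\ell$ — indeed, $T^\ell$ maps only finitely many monotonicity branches onto $j$, by Proposition~\ref{p:11}(ii) combined with the operator-free finiteness of $m_{ij}(\ell)$ in Proposition~\ref{p:11}(i) — and moreover that number grows at most like $\lambda_M^\ell$. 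Therefore, choosing $k$ so large that $R_{T(k)}^{\,c}\cdot\lambda_{M^S}<1$ (possible by \eqref{e:33}), the sum $\sum_{i\in\Q,\,i\neq j}F^{T(k)}_{ij}(R_{T(k)})$ is dominated by a convergent geometric-type series in $\ell=n(i,j)$; adding the single term $F^{T(k)}_{jj}(R_{T(k)})=1$ gives the desired summability.

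The main obstacle I anticipate is the counting estimate of how many indices $i\in\Q$ satisfy $n(i,j)=\ell$ and how the corresponding weights aggregate: one needs that $\sum_{i:\,n(i,j)=\ell} F^S_{ii}(R_{T(k)})$ is controlled not just by finiteness but by a bound that beats the geometric factor $R_{T(k)}^{c\ell}$. The cleanest route is to note that each such $i$ is the image of a distinct monotonicity branch of $T^\ell$ restricted to $j$ landing back appropriately, so $\#\{i: n(i,j)\le\ell\}$ is bounded by the number of length-$\ell$ branches emanating in $G(M^S)$, which is at most $\|\mathcal M^S\|^\ell$ in the operator case but in general is still sub-exponential of rate $\lambda_{M^S}$ by Proposition~\ref{p:2}(i) applied along shortest return loops through $j$; then decreasing $R_{T(k)}$ via \eqref{e:33} absorbs this rate. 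Once summability is in hand, Theorem~\ref{t:5} immediately yields that $T(k)$ is conjugate to a map of constant slope $\lambda_{T(k)}=e^{\htop(T(k))}$ (the last equality by Proposition~\ref{p:7}), i.e.\ $T$ is linearizable, for every sufficiently large $k$.
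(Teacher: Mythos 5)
Your overall architecture is the same as the paper's (reduce to summability of $\bigl(F^{T(k)}_{ij}(R_{T(k)})\bigr)_{i}$ via Theorems~\ref{t:2} and~\ref{t:5}, use \eqref{e:30} and \eqref{e:32}, recurrence of $T(k)$ for large $k$, $R_{T(k)}\downarrow 0$, and the finiteness of \eqref{e:42} to beat an exponential rate), but the two technical steps that carry the proof are flawed. First, the claim that finiteness of $S(j,\Q)=\sup_i n(j,i)/n(i,j)$ guarantees $n(i,j)-n(j,i)\ge c\,n(i,j)$ for some fixed $c>0$ is false: since the supremum includes $i=j$ (and more generally since $n(j,i)>n(i,j)$ is perfectly possible), one always has $S(j,\Q)\ge 1$, so the best you get is $n(i,j)-n(j,i)\ge (1-S(j,\Q))\,n(i,j)$ with a nonpositive coefficient, and your factor $R_{T(k)}^{\,n(i,j)-n(j,i)}$ then \emph{grows} in $n(i,j)$ rather than decays. (The preceding display is also algebraically reversed: after reindexing, the right-hand side equals $R_{T(k)}^{\,n(i,j)}F^S_{ij}(R_{T(k)})$, which is \emph{smaller} than the left-hand side.) Second, the concatenation you use produces loops at $i$ that are not first-return loops at $i$: a first-entrance path $i\to j$ may revisit $i$, so the composite is counted in $M^S_{ii}$, not in $F^S_{ii}$, and the bound $F^S_{ii}\le 1$ is unavailable. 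Third, and most seriously, your resolution of the "main obstacle" — that $\#\{i:n(i,j)=\ell\}$ grows at rate $\lambda_{M^S}^{\ell}$ — is unjustified for maps of non-operator type: Proposition~\ref{p:2}(i) is a per-pair limit $\lim_n[m_{ij}(n)]^{1/n}=\lambda_M$ and gives no control of the column sums $\sum_i m_{ij}(\ell)$, which for non-operator maps can grow faster than any fixed exponential even though each is finite.

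The paper avoids the counting problem entirely by an injection into first-return loops at $j$: for $i$ with $n(i,j)=n$ and $n(j,i)=p$, concatenating a shortest (hence last-exit) path $j\to i$ of length $p$ with a first-entrance path $i\to j$ of length $m$ yields a \emph{first-return} loop at $j$ of length $p+m$, and distinct $(i,\text{path})$ give distinct loops; hence $\sum_{i\in V(n,p)}f^S_{ij}(m)\le f^S_{jj}(p+m)\le(\lambda+\varepsilon)^{p+m}$, where $1/\lambda=\Phi_S$ is the radius of convergence of $F^S_{jj}$ (a quantity unchanged by the perturbation). Since $p\le c(n)\le S(j,\P)\,n\le S(j,\P)\,m$, the tail of $\sum_i F^{T(k)}_{ij}(R_{T(k)})$ is dominated by $K'\sum_{n\ge n_0}\bigl[(\lambda+\varepsilon)^{1+S(j,\P)}/\lambda_{T(k)}\bigr]^{n}$, which is finite once $k$ is large enough that $\lambda_{T(k)}>(\lambda+\varepsilon)^{1+S(j,\P)}$. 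This is the missing ingredient; without it (or an equivalent aggregation device) your argument does not close.
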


\begin{proof}Fix a partition $\P$ for $S$ and $j\in\P$. A perturbation of $S$ on $j$ of order $k\in\bbn$ will be denoted by $T(k)$. By our assumption, Proposition~\ref{p:13} and \eqref{e:42}, the supremum $S(j,\P)$ is finite. The numbers $n(j,i),n(i,j)$, $i\in\P$, do not depend on any window perturbation on an element of $\P$, because such a perturbation does not change $\P$; we define $V(n)=\{i\in\P\colon~n(i,j)=n\}$,  $c(n)=\max\{n(j,i)\colon~i\in V(n)\}$, $V(n,p)=\{i\in V(n)\colon~n(j,i)=p\}$, $1\le p\le c(n)$. Obviously for every $n$,

\begin{equation}\label{e:43}\frac{c(n)}{n}\le \sup_{i\in\P}\frac{n(j,i)}{n(i,j)}=S(j,\P)<\infty.\end{equation}

To simplify our notation, using Proposition~\ref{p:14} we will assume that $S$ is strongly recurrent, so this is also true for $T(k)$. Similarly as in the proof of Proposition~\ref{p:14} we obtain for each $k$,

\begin{equation}\label{e:45}1/\lambda_{T(k)}=R_{T(k)}<R_S=1/\lambda_S< \Phi_S=\Phi_{T(k)}=1/\lambda.\end{equation}
Moreover, as in \eqref{e:33}, the sequence $(R_{T(k)})_{k\ge 1}$ is decreasing and $\lim_kR_{T(k)}=0$, \ie $\lim_k\lambda_{T(k)}=\infty$.

Let us show that for each sufficiently large $k$ there is a summable $\lambda_{T(k)}$-solution $v=(v_i)_{i\in \P}$ of equation \eqref{e:2}. Using \eqref{e:32}, we can write for any $\varepsilon>0$, sufficiently large $n_0=n_0(\varepsilon)\in\bbn$ and some positive constants $K,K'$,

\begin{align}\label{a:6}
B&:=\sum_{n\ge n_0}\sum_{i\in\P\setminus\{j\}}f^{T(k)}_{ij}(n)R_{T(k)}^n=\sum_{n\ge n_0}\sum_{i\in\P\setminus\{j\}}f^S_{ij}(n)R_{T(k)}^n \\
\nonumber& \le \sum_{n\ge n_0}\sum_{m\ge n}\sum_{p=1}^{c(n)}\sum_{i\in V(n,p)\setminus\{j\}}\!\!\! \!\!\! \ell^S_{ji}(p)f^S_{ij}(m)R_{T(k)}^m\le \sum_{n\ge n_0}\sum_{m\ge n}\sum_{p=1}^{c(n)}f^S_{jj}(p+m)R_{T(k)}^m \\
\nonumber&\le\sum_{n\ge n_0}\sum_{m\ge n}\sum_{p=1}^{c(n)}(\lambda+\varepsilon)^{p+m}R_{T(k)}^m\le
K\cdot\sum_{n\ge n_0}(\lambda+\varepsilon)^{c(n)}\sum_{m\ge n}\left(\frac{\lambda+\varepsilon}{\lambda_{T(k)}}\right)^m  \\
\label{a:7}&\le K'\cdot\sum_{n\ge n_0}\left [\frac{(\lambda+\varepsilon)^{1+\frac{c(n)}{n}}}{\lambda_{T(k)}}\right ]^n.
\end{align}
Since by \eqref{e:45} the value $\lambda$ does not depend on $k$ and $\lim_k\lambda_{T(k)}=\infty$, from \eqref{e:43} follows that

\begin{align}
\label{a:3}&\frac{(\lambda+\varepsilon)^{1+\frac{c(n)}{n}}}{\lambda_{T(k)}}\le \frac{(\lambda+\varepsilon)^{1+S(j,\P)}}{\lambda_{T(k)}}<9/10
\end{align}
for any $k>k_1$.
Clearly the value

$$A=\sum_{n=1}^{n_0-1}\sum_{i\in\P}f^{T(k)}_{ij}(n)R_{T(k)}^n$$
given by a finite number of summands is finite, so taking \eqref{a:6}, \eqref{a:7} and \eqref{a:3} together, using $\sum_{n\ge n_0}f^{T(k)}_{jj}(n)R_{T(k)}^n<F^{T(k)}_{jj}(R_{T(k)})=1$ we obtain
$$\sum_{i\in\P}F^{T(k)}_{ij}(R_{T(k)})=A+(B+1)\le A+1+ K'\cdot\sum_{n\ge n_0}(9/10)^n<\infty$$
whenever $k>k_1$.  This finishes the proof.
\end{proof}

\subsubsection{Global window perturbation}

Let $S$ be from $\CPM$. In this part we will consider a perturbation of $S$ with a Markov partition $\P$ consisting of infinitely many window perturbations on elements of $\P$ (and with independent orders) done due to Definition~\ref{d:2}.

\begin{Definition}\label{d:6}A perturbation $T$ of $S$ on $\P'\subset \P$ will be called {\it centralized} if there is an interval $[a,b]$, $a,b\in (0,1)\setminus\bigcup_{i\in\P}i^{\circ}$ such that $\bigcup\P'\subset [a,b]$.
\end{Definition}

For technical reasons we consider also an {\it empty perturbation} ($T=S$) as centralized.

Let $T$ be a global (centralized) perturbation of $S$ on $\P'\subsetneq \P$, denote $Q=\P\setminus\P'$.
We can write for $j\in\P'$
\begin{align}\label{a:500}
\sum_{i\in\Q}F^T_{ij}(R_T) &=\sum_{i\in\Q}\sum_{n\ge 1}\sum_{k\in\P'\setminus\{j\}}g^{\P'}_{ik}(n)R_T^nF^T_{kj}(R_T)+
\sum_{i\in\Q}\sum_{n\ge 1}g^{\P'}_{ij}(n)R^n_T \\
\nonumber&=\sum_{k\in\P'\setminus\{j\}}F^T_{kj}(R_T)\sum_{i\in\Q}\sum_{n\ge 1}g^{\P'}_{ik}(n)R_T^n+
\sum_{i\in\Q}\sum_{n\ge 1}g^{\P'}_{ij}(n)R^n_T,
\end{align}
where the coefficients $g^{\P'}_{ij}(n)$ were defined before Remark~\ref{r:1}. We use formula \eqref{a:500} to argue in our proofs.

In the next theorem the perturbation $T$ need not be of an operator type.

\begin{Theorem}\label{t:3}
Let $(S,\P,M)\in\CPM^*$ be recurrent and linearizable. Assume that $T$ is a recurrent centralized perturbation of $S$ on $\P'$. If there are finitely many elements of $\P'$ that are $S$-covered by elements of $\P\setminus\P'$, then $T$ is linearizable.
\end{Theorem}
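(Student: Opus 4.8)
The goal is to produce a summable $\lambda_T$-solution of \eqref{e:2} for $T$, since by Theorem~\ref{t:5} this is equivalent to linearizability once we know (via Proposition~\ref{p:7}) that $\lambda_T=\lambda_{M^T}=e^{\htop(T)}$. Since $T$ is recurrent by hypothesis, Theorem~\ref{t:2} tells us the unique subinvariant $\lambda_T$-solution is the genuine solution proportional to $\big(F^T_{ij}(R_T)\big)_{i\in\P}$ for a fixed $j$; thus the whole problem reduces to showing $\sum_{i\in\P}F^T_{ij}(R_T)<\infty$ for a suitable choice of $j\in\P'$. The plan is to split this sum over $\P'$ and over $\Q=\P\setminus\P'$ and control each part separately, exploiting that the perturbation is \emph{centralized} and that only finitely many elements of $\P'$ are $S$-covered from outside $\P'$.

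\textbf{Step 1 (the $\P'$-part).} First I would pick $j\in\P'$ and show $\sum_{k\in\P'}F^T_{kj}(R_T)<\infty$. The point is that inside $\bigcup\P'\subset[a,b]\subset(0,1)$, the restriction of $T$ to the sub-Markov-structure on $\P'$ is, up to the finitely many ``entry'' elements, essentially a window perturbation of the corresponding piece of $S$; I can apply Proposition~\ref{p:3}(ii) to $T$ (and to $S$) — the quantity $\sum_{j\in\P,\, j\subset(\varepsilon,1-\varepsilon)}v_j$ is finite for any $\lambda$-solution — together with the fact that $S$ is linearizable (hence, by Theorem~\ref{t:5}, has a positive summable $\lambda_S$-solution whose restriction to the relevant part is summable). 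Since $R_T\le R_S$ by recurrence of $T$ and since $T=S$ outside $\bigcup\P'$, the $F^T$-paths that stay in $\P'$ are controlled by the corresponding $S$-data and the entry contributions are finite because there are only finitely many $S$-covered elements of $\P'$. Concretely, $F^T_{kj}(R_T)$ for $k\in\P'$ is bounded using Theorem~\ref{t:2} applied to the recurrent sub-structure on $\P'$, whose Perron value is $\le\lambda_T$, so the relevant generating functions converge at $R_T$.

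\textbf{Step 2 (the $\Q$-part via formula \eqref{a:500}).} Next I would bound $\sum_{i\in\Q}F^T_{ij}(R_T)$ using exactly the identity \eqref{a:500}: it expresses this sum in terms of $\sum_{i\in\Q}\sum_{n\ge1}g^{\P'}_{ik}(n)R_T^n$ and $\sum_{i\in\Q}\sum_{n\ge1}g^{\P'}_{ij}(n)R_T^n$, weighted by the already-controlled factors $F^T_{kj}(R_T)$ from Step~1. The first-entrance-to-$\P'$ coefficients $g^{\P'}_{ik}(n)$ count $T$-paths from $i\in\Q$ that hit $\P'$ for the first time at $k$; but since $T=S$ on $[0,1]\setminus\bigcup\P'$ and $\bigcup\P'$ is centralized, these paths never touch the perturbed windows until their very last step, so $g^{\P'}_{ik}(n)$ is expressed through $S$-data (first-entrance coefficients of $S$ into the corresponding set). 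Because $S$ is recurrent and linearizable, $S$ has a summable $\lambda_S$-solution and hence the relevant sums $\sum_{i\in\Q}\sum_n g^{\P'}_{ik}(n)R_S^n$ are finite; then $R_T\le R_S$ gives the bound at $R_T$. Combining with the finiteness of $\sum_{k\in\P'}F^T_{kj}(R_T)$ (using that only finitely many $k\in\P'$ receive an $S$-cover from $\Q$, so only finitely many terms $\sum_{i\in\Q}\sum_n g^{\P'}_{ik}(n)R_T^n$ are nonzero) yields $\sum_{i\in\Q}F^T_{ij}(R_T)<\infty$.

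\textbf{Main obstacle.} The delicate point is Step~1 together with the interplay in Step~2: one must be careful that $R_T$ could be strictly smaller than $R_S$, so that generating functions which are merely convergent at $R_S$ are automatically convergent — with room to spare — at $R_T$, but one still needs the $\P'$-internal sum $\sum_{k\in\P'}F^T_{kj}(R_T)$ to be finite, and this genuinely uses recurrence of $T$ (Theorem~\ref{t:2}) applied to the restricted structure, not just to the whole matrix. The ``finitely many $S$-covered elements of $\P'$'' hypothesis is exactly what makes the two sums interact finitely rather than producing an uncontrolled double series; I expect the bookkeeping of which paths pass through the perturbed windows — and verifying that $g^{\P'}_{ij}(n)$ really is $S$-computable because of centralization — to be the part that needs the most care.
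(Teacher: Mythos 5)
Your proposal is correct and follows essentially the same route as the paper: split $\sum_{i\in\P}F^T_{ij}(R_T)$ into the $\P'$-part and the $\Q$-part, control the $\Q$-part via the first-entrance identity \eqref{a:500} — where the finitely-many-$S$-covered hypothesis forces $g^{\P'}_{ik}(n)=0$ for all $i\in\Q$ unless $k$ is one of the finitely many $k_1,\dots,k_m$, each such sum being dominated by $\sum_{i\in\P}F^S_{ik_\ell}(R_S)<\infty$ coming from the summable $\lambda_S$-solution of the recurrent linearizable $S$ and $R_T\le R_S$ — and control the $\P'$-part using centralization. One small correction to your Step 1: you should not invoke Theorem~\ref{t:2} for a ``recurrent sub-structure on $\P'$'' (that submatrix need not be irreducible or recurrent, and bounding individual $F^T_{kj}(R_T)$ would not give summability of the sum over $\P'$); the paper's argument is simply that Theorem~\ref{t:2} applied to the full transition matrix of the recurrent map $T$ makes $\bigl(F^T_{kj}(R_T)\bigr)_{k\in\P}$ a $\lambda_T$-solution, whence Proposition~\ref{p:3}(ii) together with $\bigcup\P'\subset[a,b]\subset(0,1)$ yields $\sum_{k\in\P'}F^T_{kj}(R_T)<\infty$ directly.
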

\begin{proof}Let $k_1,\dots,k_m$ be all elements of $\P'$ that are $S$-covered by elements of $\Q=\P\setminus\P'$.  Then
\begin{align}\label{a:30}&\forall~k\in\P'\colon~\sum_{i\in\Q}\sum_{n\ge 1}g^{\P'}_{ik}(n)R_T^n\le\sum_{i\in\Q}\sum_{n\ge 1}g^{\P'}_{ik}(n)R_S^n\\
\nonumber &\le \max_{1\le\ell\le m}\sum_{i\in\Q}\sum_{n\ge 1}g^{\P'}_{ik_{\ell}}(n)R_S^n\le K :=\max_{1\le\ell\le m}\sum_{i\in\P}F^S_{ik_{\ell}}(R_S)<\infty.
\end{align}
Here, the last inequality follows from our assumption that the map $S$ is recurrent and linearizable together with Theorem~\ref{t:2} and Theorem~\ref{t:5}.
Using \eqref{a:500}, \eqref{a:30} and Proposition~\ref{p:3}(ii) we obtain
\begin{eqnarray*} \sum_{i\in\P}F^T_{ij}(R_T) &=&
\sum_{i\in\P'}F^T_{ij}(R_T)+\sum_{i\in\Q}F^T_{ij}(R_T) \\
\nonumber&\le&  \sum_{i\in\P'}F^T_{ij}(R_T)+K\cdot\left(1+
\sum_{k\in\P'\setminus\{j\}}F^T_{kj}(R_T)\right )<\infty.
\end{eqnarray*}
So by Theorem~\ref{t:2} and Theorem~\ref{t:5} the map $T$ is linearizable.
\end{proof}

In the next theorem the perturbation $T$ need not be of operator type.

\begin{Theorem}\label{t:10}Let $S\in\CPM$ be of operator type. If the transition matrix $M=M(S)$ represents an operator $\M$ of the spectral radius $\lambda_S$ then any centralized recurrent perturbation $T$ of $S$ such that $\htop(T)>\htop(S)$ is linearizable. The entropy assumption is always satisfied when $S$ is recurrent.
\end{Theorem}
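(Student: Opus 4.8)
\textbf{Proof plan for Theorem~\ref{t:10}.}
The plan is to exploit the same mechanism as in Theorem~\ref{t:3}, namely to bound $\sum_{i\in\P}F^T_{ij}(R_T)$ using formula \eqref{a:500}, but now replacing the ``finitely many entrances'' hypothesis by the operator-theoretic control coming from the spectral radius assumption. First I would fix a Markov partition $\P$ for $S$ witnessing the operator type, so that the matrix $M=M(S)$ represents a bounded operator $\M$ on $\ell^1(\P)$ with $r_{\M}=\lambda_S$; let $T$ be a centralized recurrent perturbation on $\P'\subsetneq\P$, write $\Q=\P\setminus\P'$, and let $\lambda_T=1/R_T$ be the Perron value of $M(T)$. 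By the centralization hypothesis there is an interval $[a,b]$ with $a,b\in(0,1)\setminus\bigcup_{i\in\P}i^{\circ}$ and $\bigcup\P'\subset[a,b]$; this is exactly the situation where Proposition~\ref{p:3}(ii) applies to any $\lambda_T$-solution, so once I know a nonnegative $\lambda_T$-solution exists it is automatically summable over the elements of $\P'$ (more precisely over elements inside $(\eps,1-\eps)$), and the only thing left to control is the tail $\sum_{i\in\Q}F^T_{ij}(R_T)$.

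The key step is estimating $\sum_{i\in\Q}\sum_{n\ge1}g^{\P'}_{ik}(n)R_T^n$ for $k\in\P'$, which in \eqref{a:500} multiplies the (already-controlled) quantities $F^T_{kj}(R_T)$. Here I would observe that a $g^{\P'}$-path from $i\in\Q$ to $k\in\P'$ does not enter $\P'$ before its final step, so on all but the last vertex the perturbation is invisible: $g^{\P'}_{ik}(n)$ is bounded by the number of paths of length $n$ from $i$ to $k$ in the graph $G(M(S))$ (equivalently, dominated termwise by $m^S_{ik}(n)$, or by $m^S_{i\ell}(n-1)m^S_{\ell k}$ summed over the predecessors $\ell$ of $k$). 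Since $\htop(T)>\htop(S)$ means $\lambda_T>\lambda_S=r_{\M}$, the resolvent identity \eqref{e:40} tells us that $\big(\tfrac1{\lambda_T}M^S_{ik}(\tfrac1{\lambda_T})\big)_{i,k}$ represents the bounded operator $(\lambda_T I-\M)^{-1}$ on $\ell^1(\P)$; hence for each fixed $k$ the columnar sum $\sum_{i\in\P}M^S_{ik}(R_T)$ is finite, uniformly over the finitely-relevant $k$ (only predecessors of $\P'$ matter, and centralization makes these form a bounded-width set). This yields a finite constant $K$ with $\sum_{i\in\Q}\sum_{n\ge1}g^{\P'}_{ik}(n)R_T^n\le K$ for every $k\in\P'$, playing the role of \eqref{a:30}.

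Feeding this into \eqref{a:500} gives
\begin{equation*}
\sum_{i\in\Q}F^T_{ij}(R_T)\le K\Bigl(1+\sum_{k\in\P'\setminus\{j\}}F^T_{kj}(R_T)\Bigr),
\end{equation*}
and since $T$ is recurrent the vector $\bigl(F^T_{ij}(R_T)\bigr)_{i\in\P}$ is (proportional to) the unique $\lambda_T$-solution by Theorem~\ref{t:2}, so Proposition~\ref{p:3}(ii) makes $\sum_{k\in\P'}F^T_{kj}(R_T)$ finite (the elements of $\P'$ lie in $[a,b]\subset(\eps,1-\eps)$), and $\sum_{i\in\P'}F^T_{ij}(R_T)$ is finite for the same reason. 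Combining, $\sum_{i\in\P}F^T_{ij}(R_T)<\infty$, so the $\lambda_T$-solution is summable and Theorem~\ref{t:5} yields linearizability. For the last sentence, when $S$ is recurrent Proposition~\ref{p:14}(i) shows any window perturbation is strongly recurrent with $R_T<R_S$, hence $\lambda_T>\lambda_S$, i.e.\ $\htop(T)>\htop(S)$ automatically (and the argument extends to the global perturbation by applying this on each perturbed window). The main obstacle I anticipate is making the ``uniformly over $k\in\P'$'' bound rigorous: a priori $\P'$ is infinite, so I must use centralization to argue that only predecessors of $\P'$ lying in a fixed element adjacent to $[a,b]$ contribute to the $g^{\P'}$-paths from $\Q$, keeping the relevant index set for the resolvent estimate bounded — this is the technical heart and must be handled before the resolvent bound can be invoked.
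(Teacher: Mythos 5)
Your proposal follows essentially the same route as the paper's proof: the decomposition \eqref{a:500}, the termwise domination of $g^{\P'}_{ik}(n)$ by the unperturbed path counts of $S$, the resolvent bound \eqref{e:40} for $\lambda_T>\lambda_S=r_{\M}$, and Proposition~\ref{p:3}(ii) together with Theorem~\ref{t:2} to control the $\P'$-part, finishing via Theorem~\ref{t:5}. The one obstacle you flag --- uniformity of the columnar bound over $k\in\P'$ --- is not actually an issue and requires no appeal to centralization or to predecessor sets at that step: by \eqref{e:36} the $\ell^1$-operator norm of $R_{\lambda_T}(\M)$ is exactly the supremum over \emph{all} columns $k\in\P$ of $\sum_{i}\frac{1}{\lambda_T}M^S_{ik}(R_T)$, so $\sum_{i\in\Q}\sum_{n\ge 1}g^{\P'}_{ik}(n)R_T^n\le\sum_{i\in\P}M^S_{ik}(R_T)\le\lambda_T\|R_{\lambda_T}(\M)\|$ holds uniformly in $k$, which is precisely the constant the paper uses.
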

\begin{proof}Let $T$ be a centralized perturbation of $S$ on $\P'\subset \P$, denote $Q=\P\setminus\P'$. From Proposition~\ref{p:7} and our assumption on the topological entropy of $S$ and $T$ we obtain $1/\lambda_T=R_T<R_S=1/\lambda_S$.
We can write for $j\in\P'$

\begin{align}\label{a:5}
\sum_{i\in\Q}F^T_{ij}(R_T)&=\sum_{i\in\Q}\sum_{n\ge 1}\sum_{k\in\P'\setminus\{j\}}g^{\P'}_{ik}(n)R_T^nF^T_{kj}(R_T)+
\sum_{i\in\Q}\sum_{n\ge 1}g^{\P'}_{ij}(n)R^n_T \\
\nonumber&=\sum_{k\in\P'\setminus\{j\}}F^T_{kj}(R_T)\sum_{i\in\Q}\sum_{n\ge 1}g^{\P'}_{ik}(n)R_T^n+
\sum_{i\in\Q}\sum_{n\ge 1}g^{\P'}_{ij}(n)R^n_T \\
\label{a:4}&\le \sum_{k\in\P'\setminus\{j\}}\left (\sum_{i\in\Q}F^S_{ik}(R_T)\right )F^T_{kj}(R_T)
+ \sum_{i\in\Q}F^S_{ij}(R_T)=V,
\end{align}
where the last inequality follows from the fact that $g^{\P'}_{ik}(n)\le f_{ik}(n)$ for each $k\in\P'$ and $n\in\bbn$ - for the definition of $g^{\P'}_{ik}(n)$, see before Remark~\ref{r:1}. By our assumption, formula \eqref{e:40} represents the resolvent operator $R_{\lambda}(\M)$ for every $\lambda>\lambda_S$. In particular, $R_{\lambda_T}(\M)$ is a bounded operator on $\ell^1(\P)$ \cite[p. 264]{aet.80}, hence with the help of Remark~\ref{r:1} we obtain

\begin{equation*}\label{e:46}
\forall~k\in\P\colon~\sum_{i\in\Q}F^S_{ik}(R_T)<\sum_{i\in\P}F^S_{ik}(R_T)<
\sum_{i\in\P}M^S_{ik}(R_T)\le\lambda_T\| R_{\lambda_T}(\M)\|
\end{equation*}
and \eqref{a:5}, \eqref{a:4} can be rewritten as
\begin{align}
\label{a:8}\sum_{i\in\P}F^T_{ij}(R_T)&\le\sum_{i\in\P'}F^T_{ij}(R_T)+V \\
\label{a:9}&\le \sum_{i\in\P'}F^T_{ij}(R_T)+\lambda_T\| R_{\lambda_T}(\M)\|\left (1+\sum_{k\in\P'\setminus\{j\}}F^T_{kj}(R_T)\right )<\infty,
\end{align}
because $\sum_{k\in\P'}F^T_{kj}(R_T)<\infty$ for topologically mixing $T$ by Proposition~\ref{p:3}(ii) and Theorem~\ref{t:2}. The conclusion follows from Theorem~\ref{t:2} and \eqref{a:8}, \eqref{a:9}. It was shown in Proposition~\ref{p:14}(i) that for a recurrent $S$ we always have $\htop(T)>\htop(S)$.
\end{proof}

 In order to apply Theorem~\ref{t:10} let us consider any map $R\in\CPM$ of operator type, fix $\varepsilon>0$. By Theorem~\ref{t:6} there is a strongly recurrent linearizable map $S$ of operator type for which $\| R-S\|<\varepsilon$. Similarly as in \eqref{e:33} we can conclude that the transition matrix of $S$ satisfies the assumption of Theorem~\ref{t:10}. By that theorem, any centralized perturbation $T$ (operator/non-operator) of $S$ is linearizable (such a centralized perturbation $T$ can be taken to satisfy $\| R-T\|<\varepsilon $).

\section{Examples}\label{s:7}

\subsection{Non-leo maps in the Vere-Jones classes}

For some $a,b\in\bbn$ consider the matrix $M=M(a,b)=(m_{ij})_{i,j\in{\small\mathbb Z}}$ given as
\begin{equation}\label{e:31}
M(a,b) = \begin{pmatrix}
\ddots &  \ddots &\ddots & & & & & & \\
\ddots & a & 0 & b & 0 & & & & \\
& 0 & a & 0 & b & 0 & & & \\
& & 0 & a & 0 & b & 0 & & \\
& & & 0 & a & 0 & b & 0 & \\
 & & & & 0 & a & 0 & b & \ddots \\
& & & & & & \ddots & \ddots & \ddots
\end{pmatrix}
\end{equation}
Clearly $M$ is irreducible but not aperiodic. It has period $2$, so we consider only $m_{ii}(2n)$. Obviously,
$$
m_{ii}(2n)=\binom{2n}{n}a^nb^n.
$$
Using Stirling's formula, we can write

\begin{equation}\label{e:21}
m_{ii}(2n)\sim \frac{2^{2n}}{\pi^{1/2}n^{1/2}}a^nb^n.
\end{equation}
So, $\lambda_M=2\sqrt{ab}=R^{-1}$. At the same time we can see from \eqref{e:21} that
$$\lim_{n\to\infty}m_{ii}(n)R^n=0 \quad \text{ and } \quad \sum_{n\ge 0}m_{ii}(n)R^n=\infty,
$$
so by Table 1, $M(a,b)$ is null recurrent for each pair $a,b\in\bbn$.

In the following statement we describe a class of maps that are not conjugate to any map of constant slope. In particular they are not linearizable. A rich space of such maps (not only Markov) has been studied by different methods in \cite{MiRo14}.

\begin{Proposition}\label{p:20}Let $a,b,k,\ell\in\bbn$, $k$ even and $\ell$ odd, consider the matrix $M(a,b)$ defined in \eqref{e:31}. Then $N=kM(a,b)+\ell E$ is a transition matrix of a non-leo map $T$ from $\CPM$. The map $T$ is null recurrent and it is not conjugate to any map of constant slope. The matrix $N$ represents an operator $\N$ on $\ell^1(\bbz)$ and

\begin{equation}\label{e:1}\lambda_N=2k\sqrt{ab}+\ell.\end{equation}
\end{Proposition}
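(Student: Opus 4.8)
The plan is to verify the four assertions in turn, assembling them from results already established in the paper. First I would check that $N = kM(a,b)+\ell E$ genuinely is the transition matrix of a map $T\in\CPM$. Here I would invoke Remark~\ref{r:4}: it suffices to exhibit a continuous, topologically mixing, countably piecewise monotone Markov map on $[0,1]$ whose transition matrix (with respect to some countable Markov partition indexed by $\bbz$) equals $N$. Concretely, I would take a Markov partition $\P=(I_i)_{i\in\bbz}$ of $[0,1]$ accumulating at both endpoints, and on each $I_i$ build a ``connect-the-dots'' piecewise linear map that covers $I_{i-1}$ exactly $a\cdot k$ times out of going left plus $I_{i+1}$ exactly $b\cdot k$ times out of going right, plus $\ell$ extra full laps over $I_i$ itself; the parity requirement ($k$ even, $\ell$ odd) is exactly what is needed for the local pieces to match orientations continuously at the partition points and to make the resulting map topologically mixing rather than merely irreducible-of-period-$2$ (the bare matrix $M(a,b)$ has period $2$, and the diagonal $\ell E$ term with $\ell$ odd breaks that period). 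That $T$ is \emph{non-leo} follows because $T(I_i)\subset I_{i-1}\cup I_i\cup I_{i+1}$, so no iterate of a small interval near one end can ever reach the whole of $[0,1]$; one also reads off $\htop(T)<\infty$ from $\|\N\|<\infty$, to be checked below, together with Proposition~\ref{p:5}.

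Next, the null-recurrence of $N$ and formula~\eqref{e:1}. The text has already shown that $M(a,b)$ is null recurrent with $\lambda_{M(a,b)}=2\sqrt{ab}$. Since for each row of $M(a,b)$ only finitely many entries are nonzero, Proposition~\ref{p:19} applies directly: it gives $\lambda_N = k\lambda_{M(a,b)}+\ell = 2k\sqrt{ab}+\ell$, which is \eqref{e:1}, and it gives that $N$ belongs to the same Vere-Jones class as $M(a,b)$, namely null recurrent. By Proposition~\ref{p:7}, $\htop(T)=\log\lambda_N<\infty$, confirming $T\in\CPM$.

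For the operator statement, I would note that each column of $N$ has the same finite sum: column $i$ of $M(a,b)$ sums to $a+b$ (the entries $a$ on the diagonal and $b$ one step away, read columnwise), so column $i$ of $N$ sums to $k(a+b)+\ell$, which is finite and independent of $i$. Hence the supremum in \eqref{e:36} is finite, so by Remark~\ref{r:3} the map $T$ is of operator type and $N$ represents a bounded operator $\N$ on $\ell^1(\bbz)$ with $\|\N\|=k(a+b)+\ell$. Finally, that $T$ is not conjugate to any map of constant slope: by Theorem~\ref{t:5} such a conjugacy would require a positive \emph{summable} $\lambda$-solution of \eqref{e:2} for some $\lambda>1$, for any Markov partition; by Proposition~\ref{p:7} any summable solution forces $\lambda\ge\lambda_N$, and by Theorem~\ref{t:12}(a) a nonnegative solution at $\lambda=\lambda_N$ exists only because $N$ is recurrent and then, by Theorem~\ref{t:2}, it is (up to scaling) the unique subinvariant solution, proportional to $(F_{ij}(R))_{i\in\bbz}$ with $R=\lambda_N^{-1}$. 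The crux is therefore to show this distinguished solution is \emph{not summable}. By the symmetry of $N$ under the shift $i\mapsto i+1$ one computes that $F_{ij}(R)$ depends only on $|i-j|$ and, because $N$ is null recurrent (so $m_{ij}(n)R^n\to 0$ but $\sum_n m_{ij}(n)R^n=\infty$), the first-passage generating function satisfies $F_{jj}(R)=1$ while the off-diagonal $F_{ij}(R)$ do not decay fast enough in $|i-j|$ to be summable over $i\in\bbz$ — indeed for the nearest-neighbour structure of $M(a,b)$ one gets $F_{i,j}(R)\to 1$ as a ratio of the relevant hitting probabilities along the half-line, so $\sum_i F_{ij}(R)=\infty$. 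I expect this non-summability estimate to be the main obstacle: it requires an explicit handle on the first-entrance generating functions for the (rescaled, drift-free) nearest-neighbour walk underlying $M(a,b)$, rather than a soft appeal to the classification. Once it is in place, Theorem~\ref{t:5} yields that $T$ is conjugate to no interval map of constant slope, and a fortiori $T$ is not linearizable.
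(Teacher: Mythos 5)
Your construction of $T$, the non-leo observation, the use of Proposition~\ref{p:19} for null recurrence and for \eqref{e:1}, and the column-sum computation for the operator statement all match the paper's (much terser) argument. The gap is in the final, and central, claim that $T$ is conjugate to no map of constant slope. First, you only address $\lambda=\lambda_N$: Proposition~\ref{p:7} gives $\lambda\ge\lambda_N$ for a summable solution, but since every row of $N$ has only finitely many nonzero entries, Corollary~\ref{c:1} guarantees nonzero nonnegative $\lambda$-solutions for \emph{every} $\lambda\ge\lambda_N$, and Theorem~\ref{t:5} would produce a conjugacy to a map of constant slope $\lambda>\lambda_N$ from any summable one of these; your appeal to Theorem~\ref{t:2} says nothing about that range. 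Second, even at $\lambda=\lambda_N$ you leave the non-summability of $(F_{ij}(R))_{i\in\bbz}$ as an acknowledged obstacle, and the heuristic you offer ($F_{ij}(R)\to 1$ along the line) is not correct unless $a=b$.

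Both issues are resolved at once by the paper's device, which you should adopt: since $Nv=\lambda v$ iff $M(a,b)v=\frac{\lambda-\ell}{k}\,v$, it suffices to examine the difference equation $a\,x_{n-1}+b\,x_{n+1}=\mu\,x_n$ on $\bbz$ for all $\mu\ge 2\sqrt{ab}$. Its general solution is $x_n=A\rho_+^n+B\rho_-^n$ (or $(A+Bn)\rho^n$ in the degenerate case $\mu=2\sqrt{ab}$) with $\rho_\pm=\frac{\mu\pm\sqrt{\mu^2-4ab}}{2b}$, so that $\rho_+\rho_-=a/b$ and $0<\rho_-\le\rho_+$. Positivity on all of $\bbz$ forces $A,B\ge 0$ (and $B=0$ in the degenerate case), while summability over $\bbz$ would require decay at both ends, i.e.\ $\rho_+<1$ and $\rho_->1$ simultaneously, which is impossible. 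Hence no positive $\lambda$-solution is summable for any $\lambda\ge\lambda_N$, and Theorem~\ref{t:5} finishes the proof. Incidentally, this computation also yields the correct behaviour $F_{ij}(R)\propto(a/b)^{(i-j)/2}$, whose sum over $i\in\bbz$ indeed diverges, though not for the reason you sketch.
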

\begin{proof}Notice that the entries of $N$ away from resp.\ on the diagonal are even, resp.\ odd. Draw a (countably piecewise affine, for example) graph of a map $T$ from $\CPM$ for which $N$ is its transition matrix. Since $M(a,b)$ is null recurrent, the matrix $N$ is also null recurrent by Proposition~\ref{p:19}. Solving the difference equation

\begin{equation}\label{e:44}a~x_{n-1}+b~x_{n+1}=\lambda~x_n, \qquad n\in\bbz,
\end{equation}
one can verify that equation \eqref{e:2} with $M=M(a,b)$ has a $\lambda$-solution if and only if $\lambda\ge \lambda_M=2\sqrt{ab}$ (this follows also from Corollary~\ref{c:1}) and none of these solutions is summable. So by Proposition~\ref{p:19} and Theorem~\ref{t:5}, the map $T$ is not conjugate to any map of constant slope.
\end{proof}

For some $a,b,c\in\bbn$ let $M=M(a,b,c)=(m_{ij})_{i,j\in {\small{\mathbb N}\cup\{0\}}}$ be given by
\begin{equation}\label{e:4}
M(a,b,c) = \begin{pmatrix}
0 &\ c\ &\ 0\ &\ 0\ &\ 0\ & \dots \\
a & 0 & b & 0 & 0 & \dots \\
0 & a & 0 & b & 0 &  \\
0 & 0 & a & 0 & b &  \\
0 & 0 & 0 & a & 0 &  \\
\vdots &  &  & \ddots & \ddots & \ddots \\
\end{pmatrix},
\end{equation}
Again, the matrix $M$ is irreducible but not aperiodic. It has period $2$,
so we consider only the coefficients $f_{00}(2n)$, see Subsection~\ref{ss:15}.
In order to find a $\lambda$-solution for $M$ we can use the difference equation \eqref{e:44} for $n\ge 0$ with the additional conditions $x_0=1$ and $x_1=\lambda/c$. Using Corollary~\ref{c:1} and the direct computation one can show:

\begin{Proposition}\label{p:M}
\begin{itemize}
\item[(a)]
For any choice of $a,b,c\in\bbn$,
$$
f_{00}(2n)=c~b^{n-1}a^n\frac{1}{n}\binom{2n-2}{n-1}\sim \frac{c~b^{n-1}a^n4^{n-1}}{\pi^{1/2}n(n-1)^{1/2}},
$$
so that $\Phi_{ii}^{-1} = (2\sqrt{ab})^{-1}$.
\item[(b)] If $2b > c$ then
$\lambda_M = 2\sqrt{ab}$ and $M$ is transient.
There is a summable $\lambda_M$-solution for $M$ if and only if $a<b$.
\item[(c)] If $2b = c$ then
$\lambda_M = 2\sqrt{ab}$ and $M$ is null recurrent.
There is a summable $\lambda_M$-solution for $M$ if and only if $a<b$.
\item[(d)] If $2b < c$ then
$\lambda_M=c\sqrt{a/(c-b)}>2\sqrt{ab}$, and $M$ is strongly recurrent.
There is a summable $\lambda_M$-solution for $M$ if and only if $a+b<c$.
\end{itemize}
\end{Proposition}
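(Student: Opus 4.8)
The plan is to compute the first-return generating function $F_{00}(z)=\sum_{n\ge1}f_{00}(n)z^{n}$ in closed form and then read everything off from it, via the renewal identity $M_{00}(z)=(1-F_{00}(z))^{-1}$, Table~1, Proposition~\ref{p:1}(iii), Corollary~\ref{c:1} and Theorem~\ref{t:2}. For part (a): a path of length $2n$ from $0$ to $0$ in $G(M(a,b,c))$ with no intermediate visit to $0$ must leave $0$ along the edge bundle $0\to1$ (multiplicity $c$), then run inside $\{1,2,3,\dots\}$ on a path from $1$ to $1$ of length $2n-2$ that never touches $0$, and finally return $1\to0$ (multiplicity $a$). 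Shifting down one level, the middle segment is a nonnegative lattice path of length $2n-2$ with $n-1$ up-steps of weight $b$ and $n-1$ down-steps of weight $a$, and the number of such paths is the Catalan number $C_{n-1}=\frac{1}{n}\binom{2n-2}{n-1}$. Hence $f_{00}(2n)=ca\cdot a^{n-1}b^{n-1}C_{n-1}=c\,a^{n}b^{n-1}\frac{1}{n}\binom{2n-2}{n-1}$; Stirling's formula gives the claimed asymptotics, so $\lim_n f_{00}(2n)^{1/2n}=2\sqrt{ab}$ and $\Phi_{00}=(2\sqrt{ab})^{-1}$. Summing the series with $\sum_{m\ge0}C_mx^m=\frac{1-\sqrt{1-4x}}{2x}$ yields
\[
F_{00}(z)=\frac{c}{2b}\left(1-\sqrt{1-4abz^{2}}\right),\qquad |z|\le(2\sqrt{ab})^{-1},
\]
which is continuous and increasing on $[0,\Phi_{00}]$ with $F_{00}(\Phi_{00})=c/(2b)$.

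Next I would locate $R=\lambda_M^{-1}$. Decomposing a closed walk at $0$ into first-return excursions gives $M_{00}(z)=(1-F_{00}(z))^{-1}$, so $R$ is the smaller of $\Phi_{00}$ and the first positive zero of $1-F_{00}$. If $2b>c$ then $F_{00}(\Phi_{00})=c/(2b)<1$, hence $R=\Phi_{00}$, $\lambda_M=2\sqrt{ab}$, and $F_{00}(R)<1$ makes $M$ transient. If $2b=c$ then $F_{00}(\Phi_{00})=1$, so still $R=\Phi_{00}$ and $\lambda_M=2\sqrt{ab}$, but now $F_{00}(R)=1$ while $F_{00}'(R)=\lim_{z\uparrow R}\frac{2acz}{\sqrt{1-4abz^{2}}}=\infty$, so $M$ is null recurrent by Table~1. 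If $2b<c$ then $F_{00}(\Phi_{00})=c/(2b)>1$, so $1-F_{00}$ has a unique zero $R\in(0,\Phi_{00})$; solving $\frac{c}{2b}(1-\sqrt{1-4abR^{2}})=1$ gives $R=\frac{1}{c}\sqrt{(c-b)/a}$, i.e.\ $\lambda_M=c\sqrt{a/(c-b)}$, and $\lambda_M>2\sqrt{ab}$ because this is equivalent to $(c-2b)^{2}>0$. Here $F_{00}(R)=1$ (so $M$ is recurrent) and $R<\Phi_{00}$, whence Proposition~\ref{p:1}(iii) gives $R<\Phi_{ii}$ for all $i$ and the last row of Table~1 forces $M$ strongly recurrent.

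Finally, the summable $\lambda_M$-solutions. Because of the boundary row, the solution set of $Mv=\lambda v$ is exactly the line spanned by the sequence $(v_i)_{i\ge0}$ with $v_0=1$, $v_1=\lambda/c$ and $a v_{i-1}+b v_{i+1}=\lambda v_i$ for $i\ge1$ (this is \eqref{e:44}); the many subinvariant solutions of Theorem~\ref{t:2} in the transient case are not genuine solutions and do not interfere. For $\lambda=\lambda_M$ one studies the characteristic polynomial $bx^{2}-\lambda_M x+a$. If $\lambda_M=2\sqrt{ab}$ (cases (b), (c)) there is a double root $x_0=\sqrt{a/b}$, and the boundary data give $v_i=(1+\beta i)x_0^{i}$ with $\beta=\frac{2b}{c}-1=\frac{2b-c}{c}\ge0$ (here $2b\ge c$, so the $v_i$ are indeed nonnegative); then $\sum_i v_i<\infty$ iff $x_0<1$, i.e.\ iff $a<b$. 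If $\lambda_M=c\sqrt{a/(c-b)}$ (case (d)) the two roots are distinct, and the boundary relation $v_1=\lambda_M/c$ is precisely what annihilates the coefficient of the larger root; equivalently, by Theorem~\ref{t:2} the recurrent $\lambda_M$-solution is the unique subinvariant one, $v_i\propto F_{i0}(R)=\phi(R)^{i}$ where $\phi(z)=\frac{1-\sqrt{1-4abz^{2}}}{2bz}$ is the one-level first-passage series solving $\phi=az+bz\phi^{2}$. A short computation gives $\phi(R)=\sqrt{a/(c-b)}$, so $v_i\propto (a/(c-b))^{i/2}$ and $\sum_i v_i<\infty$ iff $a/(c-b)<1$, i.e.\ iff $a+b<c$.

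The Stirling estimate and the arithmetic of solving for $R$ are routine. The delicate points are the combinatorial identification of the middle segment of an $f_{00}$-path as a Catalan-weighted Dyck path, which underlies the closed form of $F_{00}$, and, in case (d), the justification that the $\lambda_M$-solution selects the subdominant characteristic root rather than the dominant one --- this is exactly where Theorem~\ref{t:2} (uniqueness of the subinvariant solution in the recurrent case, realised by $(F_{i0}(R))_i$) does the work. One should also not forget to check nonnegativity of the solution in cases (b) and (c), which is the same inequality $2b\ge c$ reappearing as $\beta\ge0$.
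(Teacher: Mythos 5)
Your proposal is correct, and it follows exactly the route the paper indicates (the paper itself offers no written proof beyond pointing to the difference equation \eqref{e:44} with $x_0=1$, $x_1=\lambda/c$, Corollary~\ref{c:1}, and ``direct computation''): the Catalan-path count for $f_{00}(2n)$, the closed form of $F_{00}$ and Table~1 for the classification, and the one-dimensional solution space of the boundary recursion for the summability criteria. All the computations check out, including the key identity $\lambda_M/c=\sqrt{a/(c-b)}=x_-$ in case (d) and the nonnegativity condition $2b\ge c$ in cases (b),(c).
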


Using Propositions ~\ref{p:19} and~\ref{p:M} we can conclude.

\begin{Proposition}\label{p:16}Let $a,b,c\in\bbn$. The following hold:
\begin{itemize}
\item[(i)] The matrix $K=2M(a,b,c)+E$ is a transition matrix of a strongly recurrent non-leo map $T\in\CPM$ if and only if $2b<c$. The map $T$ is not linearizable for $a+b\ge c$.
    \item[(ii)] The matrix $L=2M(a,b,b)+E$ is a transition matrix of a transient non-leo map $T\in\CPM$. The map $T$ is linearizable if $a<b$.
\end{itemize}
\end{Proposition}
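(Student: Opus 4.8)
The plan is to combine Proposition~\ref{p:19} with Proposition~\ref{p:M} and then check the realizability as a map in $\CPM$ separately. First I would observe that the parity conditions on the entries are designed so that one can draw a continuous countably piecewise monotone graph realizing $K=2M(a,b,c)+E$, resp.\ $L=2M(a,b,b)+E$: the off-diagonal entries $2a,2b,2c$ are even, and the diagonal entries are all equal to $1$ (odd), so at each vertex $i$ the interval $i$ can be subdivided into an odd number $2k_i+1$ of monotonicity laps on which $T$ maps onto the appropriate union of neighbours, with exactly one lap accounting for the self-covering; continuity and topological mixing (hence membership in $\CPM$) then follow as in the construction used for Proposition~\ref{p:20}. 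Since each row of $M(a,b,c)$ has finitely many nonzero entries, the same is true for $K$ and $L$, so Proposition~\ref{p:19} applies verbatim: $K$ and $L$ lie in the same Vere-Jones class as $M(a,b,c)$, resp.\ $M(a,b,b)=M(a,b,c)$ with $c=b$, and $\lambda_K=2\lambda_{M(a,b,c)}+1$, $\lambda_L=2\lambda_{M(a,b,b)}+1$.

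For part (i): by Proposition~\ref{p:M}(d), $M(a,b,c)$ is strongly recurrent precisely when $2b<c$, hence by Proposition~\ref{p:19}(ii) so is $K$; this gives the stated equivalence for strong recurrence. The non-leo property is immediate from the structure of the graph (from vertex $0$ one can only reach vertex $1$ in one step, so no single interval maps onto all of $[0,1]$; more carefully, the graph has the ``ray'' structure of $\mathbb{N}\cup\{0\}$ so that no finite iterate of any element covers the whole partition). For non-linearizability when $a+b\ge c$: by Theorem~\ref{t:5} it suffices to show equation \eqref{e:2} for $K$ has no positive summable $\lambda_K$-solution. By Corollary~\ref{c:1} (applicable since rows are finite) and the equivalence $Kv=\lambda v\iff M(a,b,c)v=\frac{\lambda-1}{2}v$ from the proof of Proposition~\ref{p:19}(i), a summable $\lambda_K$-solution of \eqref{e:2} for $K$ exists iff a summable $\lambda_{M(a,b,c)}$-solution exists for $M(a,b,c)$, which by Proposition~\ref{p:M}(d) happens iff $a+b<c$. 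Hence for $a+b\ge c$ there is none, so $T$ is not linearizable.

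For part (ii): taking $c=b$ we have $2b<c$ false and in fact $2b>c=b$, so Proposition~\ref{p:M}(b) gives that $M(a,b,b)$ is transient; by Proposition~\ref{p:19}(ii) so is $L$. Again $L$ has finite rows and $Lv=\lambda v\iff M(a,b,b)v=\frac{\lambda-1}{2}v$, and Proposition~\ref{p:M}(b) says a summable $\lambda_{M(a,b,b)}$-solution exists iff $a<b$; transferring through the bijection of solutions, a summable $\lambda_L$-solution of \eqref{e:2} for $L$ exists iff $a<b$, and then Theorem~\ref{t:5} yields that $T$ is linearizable when $a<b$. The non-leo property is as in (i).

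The routine calculations here are entirely contained in Propositions~\ref{p:19} and~\ref{p:M} and Corollary~\ref{c:1}, so the only genuine content of the proof is the realization step. The main obstacle I expect is verifying carefully that the matrices $K$ and $L$, with their prescribed parities, really are transition matrices of \emph{continuous topologically mixing} interval maps: one must exhibit, at each vertex, a lap decomposition compatible with Definition~\ref{def:primary} (the Markov covering condition $T(i')\supset j$ whenever $T(i')\cap j^\circ\neq\emptyset$), arrange the images of consecutive laps to abut so as to keep $T$ continuous across the subdivision points of $i$, and confirm topological mixing — this is where the evenness of off-diagonal entries and oddness of the diagonal entry are exactly what is needed (an odd total number of laps per interval lets the ``turning'' pattern close up continuously, exactly as in the picture underlying Proposition~\ref{p:20}). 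Once this picture is in hand, everything else is a direct appeal to the cited results.
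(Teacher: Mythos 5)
Your proposal is correct and takes essentially the same route as the paper: the paper's own proof of Proposition~\ref{p:16} consists of the single assertion that $K$ and $L$ are ``clearly'' transition matrices of non-leo maps from $\CPM$, followed by a citation of Propositions~\ref{p:19} and~\ref{p:M}, which is exactly the combination you use. Your elaboration of the parity/realization step (even off-diagonal, odd diagonal entries allowing a continuous zigzag construction) is the same device the paper invokes for Proposition~\ref{p:20} and merely fills in what the paper leaves implicit.
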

\begin{proof} Clearly $K$ and $L$ are transition matrices of non-leo maps from $\CPM$. The property (i), resp. (ii) follows from the above properties (a),(b),(d), resp. (a),(b),(c),(e).
\end{proof}

\subsection{Leo maps in the Vere-Jones classes}\label{ss:1}

We have shown in Section 5 that the subset of maps from $\CPM$ that {\it are linearizable} is sufficiently rich in the case of {\it non-leo maps}  of operator/non-operator type. In order to refine the whole picture, in this paragraph we show how to detect interesting {\it leo} maps of operator/non-operator type. In the next two collections of examples we will use a simple countably infinite Markov partition for the full tent map and test various possibilities of its global window perturbations.

\subsubsection{Perturbations of the full tent map of operator type}\label{ss:11} For the full tent map $S(x)=1-\vert 1-2x\vert$, $x\in [0,1]$, consider the Markov partition
$$\P=\{i_{n}=[1/2^{n+1},1/2^{n}]\colon~n=0,1,\dots\}.
$$ We will
study several global window perturbations of $S$ of the following general form: let $a=(a_n)_{n\ge 1}$ be a sequence of odd positive integers and consider a global window perturbation $T^a$ of $S$ such that

 \begin{itemize}\item the window perturbation on $i_n$ is of order $(a_n-1)/2$ (\ie if $a_n=1$ we do not perturb $S$ on $i_n$).
   \end{itemize}

   Then using the notation of Section~\ref{s:4} and Remark~\ref{r:1} we can consider generating functions $F(z)=F^a(z)=F^a_{00}(z)=\sum_{n\ge 1}f^a_{00}(n)z^n$ corresponding to the element $i_0$: $f^a_{00}(n)=f(n)$ for each $n$. One can easily verify that
\begin{equation}\label{e:16}f(1)=1,~f(n)=a_1\cdots a_{n-1}, n\ge 2.\end{equation}
   With the help of Proposition~\ref{p:7} we denote $\lambda=\lambda_a$, resp. $\Phi=\Phi_a$ the topological entropy of $T=T^a$, resp. radius of convergence of $F^a(z)$; also we put $R=R_a=1/\lambda_a$.

 \vskip1mm
\noindent {\it Strongly recurrent:} First of all, consider the set $A(\ell)=\{1,\dots,\ell\}$ and the choice
\begin{equation*}\label{c:6}a_n(0)=\begin{cases}
1,~n\in A(\ell),\\
3,~n\notin A(\ell).
\end{cases}
\end{equation*}
Then by \eqref{e:16}, $f(n)=1$ for $n\in A(\ell)$ and $f(n)=3^{n-\ell-1}$ for each $n\ge \ell+1$,  hence
\begin{equation}\label{e:29}
\lim_{n\to\infty}[f(n)]^{1/n}=3, \quad \Phi=1/3, \quad
\sum_{n\ge 1}f(n)\Phi^n=\infty.
\end{equation}
Therefore by Table 1, $R<\Phi$, \ie $\htop(T)=\log\lambda\in (\log3,\log4)$ - for the upper bound, see \cite{ALM00}.  This implies that the map  $T_{a(0)}$ is strongly recurrent hence by Theorems~\ref{t:2} and \ref{t:6} also linearizable for any $\ell$.

\vskip1mm
\noindent {\it Transient:} Denoting $B(1)=\{1,2,3,4\}\cup \bigcup_{k\ge 2}\{3^k+1,3^k+2\}$ let us define

\begin{equation}\label{c:2}a_n(1)=\begin{cases}
1,~n\in B(1),\\
3,~n\notin B(1).
\end{cases}
\end{equation}

From \eqref{e:16} we obtain $\lim_{n\to\infty}[f(n)]^{1/n}=3$, \ie $\Phi=1/3$. Moreover, by direct computation we can verify that
\begin{equation}\label{e:28}
\sum_{n\ge 1}f(n)\Phi^n<1\text{ hence also }\sum_{n\ge 1}f(n)R^n<1
\end{equation}
since always $R\le \Phi$. It means that the map $T_{a(1)}$ define by the choice
\eqref{c:2} is transient and by Table 1 from Section 4 in fact $R=\Phi$, \ie Proposition~\ref{p:7} implies $\htop(T)=\log 3$. If we consider in \eqref{c:2} any set $B'(1)\supset B(1)$ such that the inequalities \eqref{e:28} are still satisfied, the same is true for a resulting perturbation $T'$.

\begin{Remark} Misiurewicz and Roth \cite{MiRo16}
observed that the map $T_{a(1)}$ is not conjugate to any map of constant slope. It can be shown that for each choice of a sequence $a=(a_n)_{n\ge 1}$ such that the corresponding $T$ has finite topological entropy the following dichotomy is true: either $T$ is recurrent and then equation \eqref{e:2} has no $\lambda$-solution for $\lambda>e^{h(T)}$, or $T$ is transient and then equation \eqref{e:2} does not have any $\lambda$-solution.\end{Remark}

\vskip1mm
\noindent {\it Null recurrent:} The choice \eqref{c:2} was proposed to satisfy $f(n)\sim 3^n/n^2$. Using this fact and \eqref{e:28} we obtain ($R=1/3$)

$$
\sum_{n\ge 1}f(n)R^n<1 \quad \text{ and } \quad \sum_{n\ge 1}nf(n)R^n=\infty.
$$
Let us define inductively a new set $B(2)\subset B(1)$ as follows: put $n_0=0$ and $B(2,0)=B(1)$; assuming that for some $k\in\bbn\cup\{0\}$ we have already defined $n_k$ and $B(2,k)\subset B(1)$, to obtain $B(2,k+1)$ we omit from $B(2,k)$ the least number - denoted $n_{k+1}$- such that the choice

\begin{equation*}\label{c:3}a_{n,k+1}=\begin{cases}
1, &n\in B(2,k+1),\\
3, &n\notin B(2,k+1)
\end{cases}
\end{equation*}
still gives $\sum_{n\ge 1}f(n)R^n<1$ for corresponding window perturbation of $T_{B(2,k)}$. Clearly $n_k<n_{k+1}$ for each $k$. Let $B(2)=\bigcap_{k\ge 0}B(2,k)$ and consider the global perturbation of $S$ corresponding to $a(2)=(a_n(2))_{n\ge 1}$ given by formula \eqref{c:2} with $B(1)$ replaced by $B(2)$. The set $B(2)$ contains infinitely many units (by \eqref{e:29} any choice $A(\ell)$ gives $\sum_{n\ge 1}f(A(\ell);n)R^n=\infty$). Moreover, our definition of $B(2)$ implies $R=L=1/3$,

$$\sum_{n\ge 1}f(n)R^n=1\quad \text{ and }\quad \sum_{n\ge 1}nf(n)R^n=\infty.$$
So, the corresponding perturbation $T_{B(2)}$ of $S$ is null recurrent hence by Theorem~\ref{t:6} it {\it is linearizable}. By Proposition~\ref{p:7}, $\htop(T_{B(2)})=\log3$.

\subsubsection{One more collection of  perturbations of the full tent map}\label{ss:12}

Expanding on the example of Ruette \cite[Example 2.9]{Rue03}
(see also \cite[page 1800]{Pr64}), we have the following construction.
Let $(a_n)_{n \ge 0}$ be a non-negative integer sequence with $a_0=0$,
let $\lambda > 1$ a slope determined below in \eqref{eq:lambda},
and let $i_n = [\lambda^{-n}, \lambda^{-(n+1)}]$, $n \ge 0$, be
adjacent intervals converging to $0$.
Also let $j_n$, $n \ge 1$, be adjacent intervals of
length $\lambda^{-(n+1)}(1-\lambda^{-1})(1+2a_n)$ converging to $1$
and such that $\lambda^{-2}(2\lambda-1)$ is the left boundary point of $j_1$.

\begin{figure}[ht]
\unitlength=5mm
\begin{picture}(14,13)(-1,0) \let\ts\textstyle
\put(0,0){\line(1,0){12}}\put(0,0){\line(0,1){12}}
\put(0,12){\line(1,0){12}} \put(12,0){\line(0,1){12}}
\put(0,0){\line(1,1){12}}
\put(0,2){\line(1,0){12}} \put(-1.2,1.9){\tiny$\lambda^{-1}$}
 \put(-0.5,7){\tiny$I_0$}
\put(0,1.2){\line(1,0){12}} \put(-1.2,1.1){\tiny$\lambda^{-2}$}
 \put(-0.5,1.6){\tiny$I_1$}
\put(0,0.8){\line(1,0){12}} \put(-1.2,0.6){\tiny$\lambda^{-3}$}
\thicklines
\put(0,0){\line(1,6){2}}
\put(2,12){\line(1,-6){1.8}}
\put(3.8,1.2){\line(1,6){0.13}}
\put(3.98, 2){\line(1,-6){0.13}}
\put(4.16,1.2){\line(1,6){0.13}}
\put(4.34, 2){\line(1,-6){0.13}}
\put(4.52,1.2){\line(1,6){0.13}}
\put(4.70, 2){\line(1,-6){0.2}}
\put(4.90,0.8){\line(1,6){0.03}}
\put(5.2,0.85){$\cdots$} \put(7.2,0.4){$\cdots$}
\end{picture}
\caption{The map $T\in\CPM$.}
\label{fig:map1}
\end{figure}

We define the interval map $T:[0,1] \to [0,1]$ with slope $\pm \lambda$
by
\begin{eqnarray*}
T(x) = \begin{cases}
\lambda x & \text{ if } x \in [0, \lambda^{-1}], \\
2-\lambda x & \text{ if } x \in [\lambda^{-1}, \lambda^{-2}(2\lambda-1)], \\
\text{composed of } 1+2a_n & \text{ branches of slope } \mp \lambda \text{ alternatively } \\
\text{mapping into } i_n & \text{ if } x \in j_n,~ n \ge 1.
\end{cases}
\end{eqnarray*}
To make sure that $\lim_{x \to 1} f(x) = 0$, we need $\lambda \in (0, \infty)$
to satisfy
\begin{equation}\label{eq:lambda}
\lambda = 2 + \sum_{n \ge 1} 2a_n (1-\lambda^{-1}) \lambda^{-n}.
\end{equation}
So, any sequence $(a_n)_{n\ge 0}$ such that \eqref{eq:lambda} has a positive finite solution $\lambda$ leads to the {\it linearizable} map $T\in\CPM_{\lambda}$. One can easily see that  $\P=\{i_n\}_{n\ge 0}$ is a Markov (slack) partition for $T$ as defined in Section~\ref{s:2}.

Applying Proposition~\ref{p:12} we associate to $\P$ the transition
matrix
$$
M = M(T) = (m_{ij})_{i,j\in \P} = \begin{pmatrix}
1 & 1+2a_1 & 1+2a_2 & 1+2a_3 & \cdots \\
1 & 0      & 0      & 0      & \dots  \\
0 & 1      & 0      &        &  \\
\vdots & 0 & \ddots & \ddots &
\end{pmatrix}
$$
and also the corresponding strongly connected directed graph $G=G(M)$:
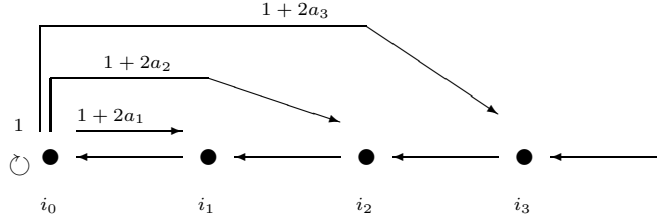
\begin{figure}[ht]
\unitlength=7mm
\begin{picture}(12,4)(0,0) \let\ts\textstyle
\put(1,1){\circle*{0.3}} \put(3.5,1){\vector(-1,0){2}}
\put(0.8,0){\tiny$i_0$}
\put(4,1){\circle*{0.3}} \put(6.5,1){\vector(-1,0){2}}
\put(3.8,0){\tiny$i_1$}
\put(7,1){\circle*{0.3}} \put(9.5,1){\vector(-1,0){2}}
\put(6.8,0){\tiny$i_2$}
\put(10,1){\circle*{0.3}} \put(12.5,1){\vector(-1,0){2}}
\put(9.8,0){\tiny$i_3$}

\put(0.2,0.7){$\circlearrowright$} \put(0.3,1.5){\tiny $1$}
\put(1.5,1.7){\tiny $1+2a_1$}
\put(1.5,1.5){\vector(1,0){2}}
\put(2,2.7){\tiny $1+2a_2$}
\put(1,1.5){\line(0,1){1}}
\put(1,2.5){\line(1,0){3}}\put(4,2.5){\vector(3,-1){2.5}}
\put(5,3.7){\tiny $1+2a_3$}
\put(0.8,1.5){\line(0,1){2}}
\put(0.8,3.5){\line(1,0){6.2}}\put(7,3.5){\vector(3,-2){2.5}}
\end{picture}
\caption{The Markov graph of $T\in\CPM_{\lambda}$;
$1+2a_n$ indicates the number of edges in $G$ from $i_0$ to $i_n$.}
\label{fig:graph1}
\end{figure}

In particular, the number of loops of length $n$ from $i_0$
to itself is $f_{00}(n)=1+2a_{n-1}$.

We use the rome technique from \cite{BGMY} (see also \cite[Section 9.3]{BB})
to compute the entropy of this graph:
it is the leading root of the equation
\begin{equation}\label{eq:entropy1}
z = 1+\sum_{n \ge 1} (1+2a_n) z^{-n}.
\end{equation}
If we divide this equation by $z$, then we get
$$
1= z^{-1} + \sum_{n \ge 1} (1+2a_n) z^{-(n+1)} = \sum_{n\ge 1} f_{00}(n)z^{-n};
$$
from Table 1 follows that the graph $G$ (the matrix $M$, the map $T$) is recurrent for any choice of a sequence $(a_n)_{n \ge 0}$ and corresponding finite $\lambda>0$. Proposition~\ref{p:7} and comparing equations \eqref{eq:lambda} and \eqref{eq:entropy1} we find that $e^{\htop(T)}=\lambda_M=\lambda$.

By Remark~\ref{r:3} the map $T$ is of operator type if and only if $\sup_na_n<\infty$. In this case, by Table 1 and Proposition~\ref{p:7},  $\Phi_{00}=1>1/2\ge R=1/\lambda_M$, so the corresponding map is always strongly recurrent. For the choice  $a_n = a^n$ for some fixed integer $a \ge 2$ the map $T$ is of non-operator type. In this case, $\sum_{n\ge 1}f_{00}(n)a^{-n}=\infty$, so $e^{-\htop(T)}=1/\lambda_M=R<a^{-1}=\Phi_{00}$, hence by Table 1 the map $T$ is still strongly recurrent.

We can also take $a_n = a^{n-\psi_n}$ for some sublinearly growing integer
sequence $(\psi_n)_{n \ge 1}$ chosen such that \eqref{eq:entropy1}
holds for $z = a$, \ie $a = 1+ \sum_{n \ge 1} a^{-n} + 2a^{-\psi_n}$.
In this case, $\Phi_{00} = R$ and $\sum_n f_{00}^{(n)} R^n  = 1$,
and the system is null-recurrent or weakly recurrent (not strongly
recurrent) depending on whether
$\sum_n n a^{-\psi_n}$ is infinite or finite.

\subsubsection{Transient non-operator example from \cite{BT12}}\label{ss:13}
Although up to now all our main results have been formulated and proved in the context of continuous maps, many statements remain true also for countably piecewise monotone Markov interval maps that are countably piecewise continuous.\footnote{This example can be made continuous by replacing each branch with a tent-map of the same height. The $\lambda$ will be twice as large, and the entropy increases accordingly in that case}
We will present a countably piecewise continuous, countably piecewise monotone example adapted from \cite{BT12}, where
it is studied in detail for its thermodynamic properties.

Let $(w_k)_{k \ge 0}$ be a strictly decreasing sequence in $[0,1]$ with
$w_0 = 1$ and $\lim_k w_k = 0$. We will consider the partition $\P=\{p_k\}_{k\in\bbn}$, where the interval map $T$ is designed to be linear increasing on each interval
$p_k = [w_k, w_{k-1})$ for $k \ge 2$, $p_1 = [w_1,w_0]$, $T(p_k) = \bigcup_{i\ge k-1}p_i$ for $k \ge 2$ and $T(p_1) = [0,1]$. With a slight modification of our definition from Section~\ref{s:2}, $\P$ is a Markov partition for $T$ and $T$ is leo. Let $M=M(T)$ be the matrix corresponding to $\P$, see below. In order to have constant slope $\lambda$, we need to solve
the recursive relation

$$
\begin{cases}
w_{k+1} = w_k - w_{k-1}/\lambda & \text{ for } k \ge 1; \\
w_0 = 1, \ w_1 = 1-1/\lambda &
\end{cases}
$$
The characteristic equation $\alpha^2 - \alpha + 1/\lambda = 0$
has real solutions $\alpha_\pm = \frac12(1\pm \sqrt{1-4/\lambda}) \in (0,1)$
whenever $\lambda \ge 4$. We obtain the solution:

$$
w^4_k = 2^{-k} (1+k/2)\quad  \text{ if } \lambda = 4,
$$
and
$$
w^{\lambda}_k = \frac{1-2/\lambda}{2 \sqrt{1-4/\lambda}} \alpha_+^k +
\frac{2\sqrt{1-4/\lambda}-1+2/\lambda}{2 \sqrt{1-4/\lambda}} \alpha_-^k \quad
\text{ if } \lambda > 4.
$$

\begin{figure}[ht]
\unitlength=4mm
\begin{picture}(9,11)(-10,-0.5) \let\ts\textstyle
\put(-21,6){$M = \begin{pmatrix}
1 &\ 1\ &\ 1\ & 1 & 1 & \dots \\
1 & 1 & 1 & 1 & 1 & \dots \\
0 & 1 & 1 & 1 & 1 &  \\
0 & 0 & 1 & 1 & 1 &  \\
0 & 0 & 0 & 1 & 1 &  \\
\vdots &  &  & \ddots & \ddots & \ddots \\
\end{pmatrix}$
}
\put(-19,0){
$S^{\lambda}(x):= \lambda(x-w^{\lambda}_k)$ if $x \in p_k$
}
\thinlines
\put(0,0){\line(1,0){10}}\put(0,10){\line(1,0){10}}
\put(0,0){\line(0,1){10}} \put(10,0){\line(0,1){10}}
\put(0,0){\line(1,1){10}}
\thicklines
\put(7.5,0){\line(1,4){2.5}} \put(8.3,-0.9){$\tiny p_1$}
\put(5.5,0){\line(1,5){2}} \put(6.2,-0.9){$\tiny p_2$}
\put(4,0){\line(1,5){1.5}}  \put(4.5,-0.9){$\tiny p_3$}
\put(2.9,0){\line(1,5){1.1}}  \put(3.0,-0.9){$\tiny p_4$}
\put(2.1,0){\line(1,5){0.8}}  \put(1.2,-0.9){$\ldots$}
\put(1.52,0){\line(1,5){0.58}}
\put(1.095,0){\line(1,5){0.425}}
\put(0.791,0){\line(1,5){0.304}}
\put(0.572,0){\line(1,5){0.219}}
\put(0.4138,0){\line(1,5){0.1582}}
\put(0.2974,0){\line(1,5){0.1164}}
\put(0.21464,0){\line(1,5){0.08276}}
\end{picture}
\caption{Right: The map $T\colon~[0,1] \to [0,1]$. }\label{fig:Flambda}
\end{figure}
It is known that $\htop(T) =\log4$ hence by Proposition~\ref{p:7}, $\lambda_M=4$.
If we remove site $p_1$ (\ie remove the first row and column) from $M$,
the resulting matrix is $M$ again, so strongly connected directed graph $G=G(M)$ contains its copy as a proper subgraph and due to Theorem~\ref{t:1}(i), $M$ and also $T$ is transient.

Writing $v^{\lambda}_k = |p_k| = w^{\lambda}_{k-1}-w^{\lambda}_k$, we have found, in accordance with Theorem~\ref{t:2}, a positive summable $\lambda$-solution of
equation \eqref{e:2} for each $\lambda\ge 4$.  Summarizing, the map $T$ is conjugate to a map of constant slope $\lambda$ whenever $\lambda\ge 4$. $T$ is also {\it linearizable}, since $\lambda=4=\lambda_M=e^{\htop(T)}$.

\subsubsection{Transient non-operator example from \cite{BoSou11}}\label{ss:14}
Let $V=\{v_{i}\}_{i\geq -1}$, $X=\{x_{i}\}_{i\geq 1}$
\noindent $V,X$ converge to $1/2$ and
\noindent $0=v_{-1}=x_0=v_0<x_1<v_1<x_2<v_2<x_3<v_3<\cdots$; the interval map
$T=T(V,X):[0,1]\rightarrow [0,1]$ satisfies
\begin{enumerate}
\item[(a)] $T(v_{2i-1})=1-v_{2i-1}$, $i\geq 1$, $T(v_{2i})=v_{2i}$, $i\geq 0$,
\item[(b)] $T(x_{2i-1})=1-v_{2i-3}$, $i\geq 1$, $T(x_{2i})=v_{2i-2},~i\geq 1$,
\item[(c)] $T_{u,v}=\left\vert\frac{T(u)-T(v)}{u-v}\right\vert>1$ for each interval $[u,v]\subset [x_i,x_{i+1}]$,
\item[(d)] $T(1/2)=1/2$ and $T(t)=T(1-t)$ for each $t\in [1/2,1]$.
\end{enumerate}

Property (c) holds for our $V,X$ since by
(a),(b), we have $T_{x_i,x_{i+1}}>2$ for each $i\ge 0$.

\begin{figure}[ht]
\unitlength=10mm
\begin{center}
\epsfig{file=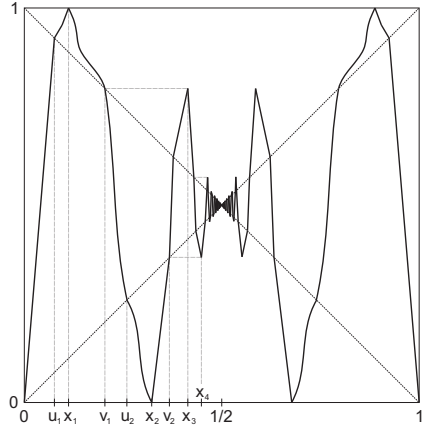,width=5.5cm}
\end{center}
\caption{The leo map $T\in\mathcal{F}\subset \CPM$.}
\end{figure}

Let us denote $\mathcal{F}(V,X)$ the set of all continuous interval maps
fulfilling (a)-(d) for a fixed pair $V,X$ and put  $\mathcal{F}:=\bigcup_{V,X}\mathcal{F}(V,X)$. It was shown in \cite{BoSou11} that

\begin{itemize}
\item $\mathcal{F}$ is a conjugacy class of maps in $\CPM$.
\item strongly connected directed graph $G=G(M)$ contains its copy as a proper subgraph \cite[Theorem 4.5, Fig. 3]{BoSou11}, so due to Theorem~\ref{t:1}(i), $T$ is transient.
\item the common topological entropy equals $\log9$.
\item equation \eqref{e:2} has a positive summable $\lambda$-solution for each $\lambda\ge 9=e^{\htop(T)}$.
\end{itemize}

\begin{figure}[ht]
\unitlength=10mm
\begin{center}
\epsfig{file=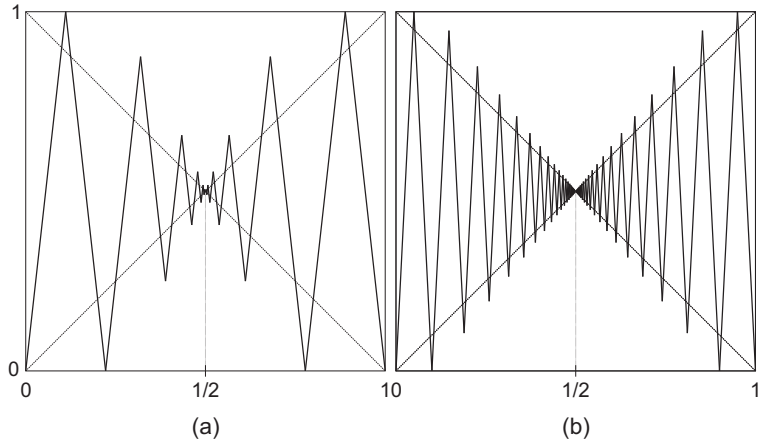,width=10cm}
\end{center}
\caption{$T\in\mathcal{F}$ is conjugate to a map of slope $9$ (a) and
and of slope $20$ (b).}
\end{figure}

We can factor out the left-right symmetry of this map by using the
semiconjugacy $h(x) = 2|x-\frac12|$, and the factor map $\tilde T$
has transition matrix
$$
M = \begin{pmatrix}
4 &\ 4\ &\ 4\ &\ 4\ &\ 4\ & \dots \\
1 & 4 & 4 & 4 & 4 & \dots \\
0 & 1 & 4 & 4 & 4 &  \\
0 & 0 & 1 & 4 & 4 &  \\
0 & 0 & 0 & 1 & 4 &  \\
\vdots &  &  & \ddots & \ddots & \ddots \\
\end{pmatrix}
$$
with similar properties as the previous example. Therefore $T$ is conjugate to a map of constant slope $\lambda$ whenever $\lambda\ge 9$, and also linearizable, since $\lambda=9=\lambda_M=e^{\htop(T)}$.

\subsection{One application of our results}\label{ss:2}Using Proposition~\ref{p:20} let $K=2M(1,1)+E$. We have discussed the fact that $K$ is a transition matrix of a non-leo map $T\in\CPM$ with corresponding Markov partition denoted by $\P$. Clearly by Remark~\ref{r:3} $K$ represents a bounded linear operator - denote it by $\K$ - on $\ell^1(\P)$, so $T$ is of operator type. We can conclude that:
\begin{itemize}
\item[(i)] $\lambda_K=5=e^{\htop(T)}$ - Propositions~\ref{p:7}, \ref{p:20}.
\item[(ii)] $\lambda_K=r_{\K}=\|\K\|$ - Proposition~\ref{p:20}, Section~\ref{s:2}\eqref{e:36}.
        \item[(iii)] $T$ is not conjugate to a map of constant slope (is not linearizable) - Proposition~\ref{p:20}.
    \item[(iv)] $T$ is null recurrent - Proposition~\ref{p:20}.
    \item[(v)] Let $\P'$ be a Markov partition for $T$, denote $K'$ the transition matrix of $T$ with respect to $\P'$ representing a bounded linear operator $\mathcal K'$ on $\ell^1(\P')$.
    Since
 \begin{equation*}\label{e:55}\forall~y\in (0,1)\colon~\#T^{-1}(y)=5,\end{equation*}
we have $\lambda_{K'}=r_{\mathcal K'}=5$ - see (i), Section~\ref{s:2} and Proposition~\ref{p:7}. Then by Theorem~\ref{t:10} any recurrent centralized (operator/non-operator) perturbation of $T$ is linearizable. In particular it is true for any local window perturbation of $T$ on some element of $\P'$ -  Proposition~\ref{p:14}(i).
    \item[(vi)]  Let $\P'$ be a Markov partition for $T$ which equals $\P$ outside of some interval $[a,b]\subset (0,1)$. Let $T'$ be a local window perturbation of $T$ on some element of $\P'$; from the previous paragraph (v) follows that $T'$ is strongly recurrent and linearizable. Consider a centralized (operator/non-operator) perturbation $T''$ of $T'$ on some $\P''\subset\P'$. Then if $T''$ is recurrent it is linearizable by Theorem~\ref{t:3}. Otherwise we can use either Theorem~\ref{t:6} (an operator case) or Theorem~\ref{t:9} (non-operator case, $S(j,\P')$ is finite for $j\in\P'$) to show that a local window perturbation of $T''$ of a sufficiently large order is linearizable.
\end{itemize}


\setcounter{section}{-1} 

\bibliographystyle{mdpi}

\renewcommand\bibname{References}

\end{document}